\documentclass[nosumlimits,twoside]{amsart}
\usepackage{amsfonts, amsmath, amssymb}
\usepackage{graphicx}
\usepackage{float}
\usepackage{srcltx}
\usepackage[all]{xy}
\usepackage{version}
\usepackage[T1]{fontenc}
\usepackage{xcolor}
\usepackage{enumerate}
\usepackage[normalem]{ulem}
\usepackage{bm}
\usepackage{latexsym}
\usepackage[2emode]{psfrag}
\usepackage{yhmath}
\usepackage{array}
\usepackage{dsfont}
\usepackage{mathrsfs}
\usepackage{tikz-cd}
\usepackage{comment}

\addtolength{\hoffset}{-1.5cm}
\addtolength{\textwidth}{3cm}
\addtolength{\voffset}{-1.5cm}
\addtolength{\textheight}{3cm}

\newtheorem{theorem}{\sc Theorem}[subsection]
\newtheorem{proposition}[theorem]{\sc Proposition}

\newtheorem{lemma}[theorem]{\sc Lemma}
\newtheorem{corollary}[theorem]{\sc Corollary}
\theoremstyle{definition}
\newtheorem{definition}[theorem]{\sc Definition}

\newtheorem{example}[theorem]{\sc Example}

\theoremstyle{remark}
\newtheorem{remark}[theorem]{\sc Remark}

\newenvironment{invisible}{{\noindent\sc \colorbox{yellow}{Invisible:}\;}\color{gray}}{\medskip}
\excludeversion{invisible}

\setlength{\textheight}{225mm} \setlength{\topmargin}{0.46cm}
\setlength{\textwidth}{150mm} \setlength{\oddsidemargin}{0.46cm}
\setlength{\evensidemargin}{0.46cm}

\allowdisplaybreaks
\excludeversion{invisible?}
\excludeversion{proof?}

\newcommand{\Cc}{\mathcal{C}}
\newcommand{\Dd}{\mathcal{D}}
\newcommand{\Mm}{\mathcal{M}}
\newcommand{\mm}{\mathfrak{M}}

\newcommand{\Rr}{\mathcal{R}}

\newcommand{\ot}{\otimes}

{
\left\{\begin{aligned}}
{\end{aligned}
\right.
}

\begin{document}

\title[\tiny Yetter--Drinfeld post-Hopf algebras and Yetter--Drinfeld relative Rota--Baxter operators]{Yetter--Drinfeld post-Hopf algebras and Yetter--Drinfeld relative Rota--Baxter operators}
\author{Andrea Sciandra}
\begin{abstract}\noindent
    Recently, Li, Sheng and Tang introduced post-Hopf algebras and relative Rota--Baxter operators (on cocommutative Hopf algebras), providing an adjunction between the respective categories under the assumption that the structures involved are cocommutative. We introduce Yetter--Drinfeld post-Hopf algebras, which become usual post-Hopf algebras in the cocommutative setting. In analogy with the correspondence between cocommutative post-Hopf algebras and cocommutative Hopf braces, the category of Yetter--Drinfeld post-Hopf algebras is isomorphic to the category of Yetter--Drinfeld braces introduced by the author in a joint work with D. Ferri. This allows to explore the connection with matched pairs of actions and provide examples of Yetter--Drinfeld post-Hopf algebras. Moreover, we prove that the category of Yetter--Drinfeld post-Hopf algebras is equivalent to a subcategory of Yetter--Drinfeld relative Rota--Baxter operators. The latter structures coincide with the inverse maps of Yetter--Drinfeld 1-cocycles introduced by the author and D. Ferri, and generalise bijective relative Rota--Baxter operators on cocommutative Hopf algebras. Hence the previous equivalence passes to cocommutative post-Hopf algebras and bijective relative Rota--Baxter operators. Once the surjectivity of the Yetter--Drinfeld relative Rota--Baxter operators is removed, the equivalence is replaced by an adjunction and one can recover the result of Li, Sheng and Tang in the cocommutative case.
\end{abstract}
\address{%
\parbox[b]{0.9\linewidth}{University of Turin, Department of Mathematics ``G.\@ Peano'',\\ via
 Carlo Alberto 10, 10123 Torino, Italy.}}
 \email{andrea.sciandra@unito.it}
 \keywords{Yetter--Drinfeld post-Hopf algebras, Yetter--Drinfeld relative Rota--Baxter operators, Yetter--Drinfeld braces, matched pairs, 1-cocycles, Hopf braces.}
\subjclass[2020]{Primary 16T05; Secondary 17B38}
\maketitle
\tableofcontents

\section{Introduction}
\noindent Post-Lie algebras have been introduced in \cite{V} and have found several interesting applications \cite{CEO,MuLu,BK}, just to name a few; while Rota--Baxter operators on Lie algebras first appeared in \cite{Ku} and are now studied in different fields, such as quantum field theory \cite{CK,Kreimer}. Post-Lie algebras and Rota--Baxter operators are strictly related: given a post-Lie algebra, the identity morphism is a Rota--Baxter operator on the subadjacent Lie algebra and, vice versa, one can induce a post-Lie algebra structure on the domain of a Rota--Baxter operator. In \cite{YYT} Li, Sheng and Tang introduced post-Hopf algebras and relative Rota--Baxter operators on cocommutative Hopf algebras, the latter generalising Rota--Baxter operators given in \cite{Go} (where the action involved is the adjoint one). The space of primitive elements of a post-Hopf algebra results to be a post-Lie algebra and a relative Rota--Baxter operator on a cocommutative Hopf algebra restricts to a Rota--Baxter operator between the Lie algebras of primitive elements. Moreover, in \cite{YYT}, the aforementioned connection between post-Lie algebras and Rota--Baxter operators is extended to cocommutative post-Hopf algebras and relative Rota--Baxter operators on cocommutative Hopf algebras, providing an adjunction between the respective categories. In \cite{YYT2} (which is the arXiv version of \cite{YYT}), a correspondence between cocommutative post-Hopf algebras and cocommutative Hopf braces is exhibited, then further investigated \cite{HLTL} in terms of relative Rota--Baxter operators. Hopf braces were introduced by Angiono, Galindo and Vendramin in \cite{AGV} as the Hopf-theoretic version of skew braces, introduced in \cite{GV}. Moreover, cocommutative Hopf braces are shown to be equivalent to matched pairs of cocommutative Hopf algebras. In \cite{FS} the author and D. Ferri proved that, removing the cocommutativity hypothesis, matched pairs result to be equivalent to the more general Yetter--Drinfeld braces. The latter structures coincide with Hopf braces if the cocommutativity is assumed.
\vskip 10pt

\noindent The purpose of this paper is to generalise post-Hopf algebras and relative Rota--Baxter operators in a not-cocommutative setting, keeping in mind the connection with Yetter--Drinfeld braces. 

More precisely, the content of the paper is the following. In Section \ref{sec:preliminaries} we recall the notions and results that motivate this work and are frequently used throughout the paper. In Section \ref{sec:YetterDrinfeldpostHopf} we introduce Yetter--Drinfeld post-Hopf algebras, which coincide with post-Hopf algebras in the cocommutative case, and we prove that the category of Yetter--Drinfeld post-Hopf algebras is isomorphic to the category of Yetter--Drinfeld braces (and thus to the category of matched pairs of actions). Explicitly, a Yetter--Drinfeld post-Hopf algebra is the datum $(H,\cdot,1,\Delta,\epsilon,S,\rightharpoonup)$ where $(H,\cdot,1)$ is an algebra, $(H,\Delta,\epsilon)$ is a coalgebra, $S$ is the convolution inverse of the morphism $\mathrm{Id}_{H}$ and $\rightharpoonup$ is a morphism of coalgebras satisfying some technical hypotheses. Then, one can build the subadjacent Hopf algebra $H_{\rightharpoonup}:=(H,\bullet_{\rightharpoonup},1,\Delta,\epsilon,S_{\rightharpoonup})$ so that $(H,\cdot,1,\Delta,\epsilon,S)$ results to be an object in $\mathrm{Hopf}(^{H_{\rightharpoonup}}_{H_{\rightharpoonup}}\mathcal{YD})$ and $(H,\cdot,\bullet_{\rightharpoonup},1,\Delta,\epsilon,S,S_{\rightharpoonup})$ becomes a Yetter--Drinfeld brace. A Yetter--Drinfeld post-Hopf algebra induces a post-Lie algebra structure on its space of primitive elements. At the end of the section we provide examples of Yetter--Drinfeld post-Hopf algebras coming from Yetter--Drinfeld braces. Finally, in Section \ref{sec:YetterDrinfeldRotaBaxter} we introduce Yetter--Drinfeld relative Rota--Baxter operators; these generalise relative Rota--Baxter operators on cocommutative Hopf algebras and coincide with the inverse maps of Yetter--Drinfeld 1-cocycles introduced in \cite{FS}. As in the cocommutative case we can restrict Yetter--Drinfeld relative Rota--Baxter operators to primitive elements and group-like elements, obtaining Rota--Baxter operators on Lie algebras and groups, respectively. We show that a subcategory of Yetter--Drinfeld relative Rota--Baxter operators is equivalent to the category of Yetter--Drinfeld post-Hopf algebras: given a  Yetter--Drinfeld post-Hopf algebra $(H,\rightharpoonup)$ the identity morphism $\mathrm{Id}_{H}:H\to H_{\rightharpoonup}$ is a Yetter--Drinfeld relative Rota--Baxter operator and, vice versa, one can induce a structure of Yetter--Drinfeld post-Hopf algebra on the domain (or codomain) of a Yetter--Drinfeld relative Rota--Baxter operator.
This equivalence passes to cocommutative post-Hopf algebras and bijective relative Rota--Baxter operators on cocommutative Hopf algebras. Once the surjectivity of the Yetter–Drinfeld relative Rota–Baxter operators is removed, the equivalence is replaced by an adjunction and one can recover, in the cocomutative case, the result given in \cite{YYT}. Then, the aforementioned correspondence between post-Lie algebras and Rota--Baxter operators, generalised in \cite{YYT} for cocommutative Hopf algebras, is now given in full generality. \vskip 10pt

\noindent\textit{Notations and conventions}. 
We denote by $\Bbbk$ an arbitrary field and all vector spaces will be $\Bbbk$-vector spaces. By a linear map we mean a $\Bbbk$-linear map and the unadorned tensor product $\otimes$ is the one of $\Bbbk$-vector spaces. Algebras over $\Bbbk$ will be associative and unital and coalgebras over $\Bbbk$ will be coassociative and counital. We use symbols like $\bullet$ and $\cdot$ for the multiplication of an algebra; equivalently, the multiplication will be denoted by $m$, $m_{\cdot}$, $m_{\bullet}$. The unit of an algebra $A$ will be denoted by $1$ or $u:\Bbbk\to A$. The comultiplication and the counit of a coalgebra $C$ will be denoted by $\Delta$ and $\epsilon$, respectively. We will use Sweedler’s notation for calculations involving the coproduct, i.e. we write $\Delta(c)=c_{1}\otimes c_{2}$ for all $c\in C$, omitting the summation. The antipode of a Hopf algebra will be denoted by $S$ or $T$.

\section{Preliminaries}\label{sec:preliminaries}
\noindent In this section we recall the notions and results that we are going to use throughout the paper. For more details about Hopf algebras and category theory we refer the reader to \cite{Sw,Majid2,Rad} and \cite{Bor}, respectively. \medskip

\noindent\textbf{Post-Lie algebras and relative Rota--Baxter operators}. The notion of post-Lie algebra was introduced in \cite{V} and it has found several interesting applications, such as \cite{CEO,MuLu,BK}. Recall that a \textit{post-Lie algebra} $(\mathfrak{g},[\cdot,\cdot],\rightharpoonup)$ is the datum of a Lie algebra $(\mathfrak{g},[\cdot,\cdot])$ and a linear map $\rightharpoonup:\mathfrak{g}\otimes\mathfrak{g}\to\mathfrak{g}$ such that, for all $x,y,z\in\mathfrak{g}$, the following equalities hold:
\begin{equation}\label{comp.act-brac}
      x\rightharpoonup[y,z]=[x\rightharpoonup y,z]+[y,x\rightharpoonup z],
\end{equation}
\begin{equation} \label{postLiecondition2}
\big([x,y]+(x\rightharpoonup y)-(y\rightharpoonup x)\big)\rightharpoonup z=\big(x\rightharpoonup(y\rightharpoonup z)\big)-\big(y\rightharpoonup(x\rightharpoonup z)\big).
\end{equation}
If $(\mathfrak{g},[\cdot,\cdot])$ is abelian, $(\mathfrak{g},[\cdot,\cdot],\rightharpoonup)$ is called a pre-Lie algebra \cite{CL}. More precisely, a \textit{pre-Lie algebra} $(\mathfrak{g},\rightharpoonup)$ is just a vector space $\mathfrak{g}$ equipped with a linear map $\rightharpoonup:\mathfrak{g}\otimes\mathfrak{g}\to\mathfrak{g}$ such that
\[
\big((x\rightharpoonup y)-(y\rightharpoonup x)\big)\rightharpoonup z=\big(x\rightharpoonup(y\rightharpoonup z)\big)-\big(y\rightharpoonup(x\rightharpoonup z)\big).
\]
To any post-Lie algebra $(\mathfrak{g},[\cdot,\cdot],\rightharpoonup)$ one can associate a Lie algebra $\mathfrak{g}_{\rightharpoonup}:=(\mathfrak{g},[\cdot,\cdot]_{\rightharpoonup})$, called the \textit{subadjacent Lie algebra}, where the Lie bracket $[\cdot,\cdot]_{\rightharpoonup}$ is defined by
\[
[x,y]_{\rightharpoonup}:=(x\rightharpoonup y)-(y\rightharpoonup x)+[x,y],
\]
so that \eqref{comp.act-brac}-\eqref{postLiecondition2} equivalently mean that $\mathfrak{g}\to\mathfrak{gl}(\mathfrak{g}),x\mapsto\big(x\rightharpoonup(-)\big)$ is an action of the Lie algebra $(\mathfrak{g},[\cdot,\cdot]_{\rightharpoonup})$ on the Lie algebra $(\mathfrak{g},[\cdot,\cdot])$. Recently, many studies on the universal enveloping algebra of a pre-Lie algebra and a post-Lie algebra were conducted, see e.g. \cite{OG,ELM}. \medskip

We also mention that the corresponding notion of \textit{post-group} was introduced in \cite{BGST}. It consists of a group $(G,\cdot)$ equipped with a map $\rightharpoonup:G\times G\to G$ such that, for each $a\in G$, the map $a\rightharpoonup(-):G\to G$ is an automorphism of the group $(G,\cdot)$, that is
\[
a\rightharpoonup(b\cdot c)=(a\rightharpoonup b)\cdot(a\rightharpoonup c). 
\]
Moreover, the following deformed associativity holds:
\[
a\rightharpoonup(b\rightharpoonup c)=(a\cdot(a\rightharpoonup b))\rightharpoonup c.
\]
As shown in \cite{AEM}, the notion of post-group formalises
properties of group-like elements in the completion of the enveloping algebra of a post-Lie algebra, see also \cite{ELM}. The category of post-groups is proven to be isomorphic to the category of skew braces \cite[Theorem 3.25]{BGST}. \medskip

We also recall that, given an action $\varphi:\mathfrak{g}\to\mathrm{Der}(\mathfrak{h})$ of a Lie algebra $(\mathfrak{g},[\cdot,\cdot]_{\mathfrak{g}})$ on a Lie algebra $(\mathfrak{h},[\cdot,\cdot]_{\mathfrak{h}})$, a linear map $R:\mathfrak{h}\to\mathfrak{g}$ is called a \textit{relative Rota–Baxter operator of weight 1} on $\mathfrak{g}$ with respect to $(\mathfrak{h},\varphi)$ if the following condition holds true for all $x,y\in\mathfrak{h}$:
\[
[R(x),R(y)]_{\mathfrak{g}}=R(\varphi(R(x))y-\varphi(R(y))x+[x,y]_{\mathfrak{h}}).
\]
It was shown in \cite{BGN} that, given a relative Rota–Baxter operator $R:\mathfrak{h}\to\mathfrak{g}$, 
the Lie algebra $\mathfrak{h}$ inherits a post-Lie algebra structure defined by
\[
x\rightharpoonup y:=\varphi(R(x))y,\quad \text{for all}\ x,y\in\mathfrak{h}
\]
and, given a post-Lie algebra,  the identity map is a relative Rota–Baxter operator on the subadjacent Lie algebra. Similarly, given an action $\phi:G\to\mathrm{Aut}(H)$ of a group $G$ on a group $H$, a map $R:H\to G$ is called a \textit{relative Rota–Baxter operator of weight 1} on $G$ with respect to $(H,\phi)$ if
\[
R(h)\cdot_{G}R(k)=R(h\cdot_{H}\phi(R(h))k),\quad \text{for all}\ h,k\in H.
\]
This notion was introduced in \cite{GLS} and its connection with skew braces was studied in \cite{BG}. \medskip

\noindent\textbf{Yetter--Drinfeld braces and matched pairs}. The notion of \textit{skew brace} was introduced in \cite{GV}, generalising that of \textit{brace} given in \cite{Ru}. The Hopf-theoretic version of a skew brace, namely a \textit{Hopf brace}, was introduced in \cite{AGV}. Hopf braces are equivalent to bijective 1-cocycles, see \cite[Theorem 1.12]{AGV}. Moreover, in the cocommutative case, Hopf braces are also equivalent to matched pairs of Hopf algebras in the sense of \cite[Definition 7.2.1]{Majid2} which satisfy an additional relation (precisely \eqref{braided-commutativity} below), see \cite[Theorem 3.3]{AGV}. In \cite{FS} we have introduced Yetter--Drinfeld braces, which coincide with Hopf braces in the cocommutative case, and in \cite[Theorem 3.25]{FS} we proved that the category of Yetter--Drinfeld braces is isomorphic to the category of matched pairs of actions on a Hopf algebra. 

Let us recall the notions of Yetter--Drinfeld brace and matched pair of actions on a Hopf algebra. In order to do this we first recall the definition of Yetter--Drinfeld module:

\begin{definition}
    Let $(H,\bullet,1,\Delta,\epsilon,T)$ be a Hopf algebra. A \textit{(left-left) Yetter--Drinfeld module} on $H$ is the datum of a left $H$-module $V$ with an action $\rightharpoonup:H\ot V\to V$, which is also a left $H$-comodule with a coaction $\rho:V\to H\ot V$, satisfying the following compatibility condition:
\[
\rho(h\rightharpoonup v)=h_{1}\bullet v_{-1}\bullet T(h_{3})\ot(h_{2}\rightharpoonup v_{0}).
\]
A morphism of Yetter--Drinfeld modules is a morphism of both left $H$-modules and left $H$-comodules. We denote by $^{H}_{H}\mathcal{YD}$ the category of Yetter–Drinfeld modules over $H$.
\end{definition}

More generally, one can define Yetter--Drinfeld modules over a bialgebra but we will work with Hopf algebras in the following. \medskip

It is known that, see e.g. \cite{Rad}, given any monoidal category $(\mathcal{M},\otimes,\mathbf{1})$, one can consider the categories of algebras $\mathrm{Mon}(\Mm)$ and coalgebras $\mathrm{Comon}(\Mm)$ in $\mathcal{M}$. Once the category is braided (or even just prebraided) one can also define the categories of bialgebras $\mathrm{Bimon}(\Mm)$ and Hopf algebras $\mathrm{Hopf}(\Mm)$ in $\Mm$.

The category $^{H}_{H}\mathcal{YD}$ of Yetter--Drinfeld modules over a Hopf algebra $H$ is monoidal and it is equipped with a prebraiding defined, for all $X,Y\in{}^{H}_{H}\mathcal{YD}$, as 
\[
c_{X,Y}:X\ot Y\to Y\ot X,\ x\ot y\mapsto (x_{-1}\rightharpoonup y)\ot x_{0} 
\]
and this is bijective if the antipode of $H$ is bijective, see e.g. \cite{Rad}. Thus, one can define Hopf algebras $\mathrm{Hopf}(^{H}_{H}\mathcal{YD})$ in the category $^{H}_{H}\mathcal{YD}$.

\begin{definition}[{\cite[Definition 3.16]{FS}}]
    A \textit{Yetter–Drinfeld brace} (or YD-brace) $(H,\cdot,\bullet,1,\Delta,\epsilon,S,T)$ is the datum of a Hopf algebra $H^{\bullet}=(H,\bullet,1,\Delta,\epsilon,T)$, a second operation $\cdot$ on $H$, and a linear map $S:H\to H$ such that:
\begin{itemize}
    \item[1)] $(H,\cdot,1,\Delta,\epsilon,S)$ is in $\mathrm{Hopf}(^{H^{\bullet}}_{H^{\bullet}}\mathcal{YD})$ with the action $\rightharpoonup$ defined by $a\rightharpoonup b:=S(a_{1})\cdot(a_{2}\bullet b)$, and the coaction given by $\mathrm{Ad}_{L}$;\medskip
    \item[2)] if we define $a\leftharpoonup b:=T(a_{1}\rightharpoonup b_{1})\bullet a_{2}\bullet b_{2}$, the two maps $\rightharpoonup,\leftharpoonup$ satisfy \eqref{MP5};\medskip
    \item[3)] the two operations $\bullet$ and $\cdot$ satisfy the Hopf brace compatibility:
\begin{equation}\label{HBC}
    a\bullet(b\cdot c)=(a_{1}\bullet b)\cdot S(a_{2})\cdot(a_{3}\bullet c).
\end{equation}
\end{itemize}
Given two Yetter--Drinfeld braces $H$ and $K$, a \emph{morphism of Yetter--Drinfeld braces} is a map $f\colon H\to K$ that is a morphism in $\mathrm{Hopf}(\mathsf{Vec}_\Bbbk)$ between the respective Hopf algebras in $\mathsf{Vec}_\Bbbk$, and satisfies
\[
f(a\cdot_H b)=f(a)\cdot_K f(b).
\]             
In particular, this implies $S_{K}f=fS_{H}$, $f\rightharpoonup_{H}\;=\;\rightharpoonup_{K}\!(f\otimes f)$ and $(\mathrm{Ad}_{L})_Kf=(f\ot f)(\mathrm{Ad}_{L})_H$. We denote the category of Yetter--Drinfeld braces by $\mathcal{YD}\mathrm{Br}(\mathrm{Vec}_\Bbbk)$.
\end{definition}

\begin{definition}[cf. {\cite[Definition 2.1]{FS}}]\label{def:MPA}
    Let $H$ be a bialgebra. A \emph{matched pair of actions} $(H,\rightharpoonup,\leftharpoonup)$ on $H$ is the datum of a left action $\rightharpoonup$ and a right action $\leftharpoonup$ of $H$ on itself, such that $H$ is a left $H$-module coalgebra and a right $H$-module coalgebra with the respective actions, and the following conditions hold for all $a,b,c\in H$:
\begin{align}
        \label{matchedpair-1}
        a\rightharpoonup 1 &= \epsilon(a)1
        ,\\
        \label{matchedpair-2}
        1\leftharpoonup a &= \epsilon(a)1
        ,\\
        \label{matchedpair-3}
         a\rightharpoonup(b\cdot c)&=(a_{1}\rightharpoonup b_{1})\cdot ((a_{2}\leftharpoonup b_{2})\rightharpoonup c),\\
        \label{matchedpair-4}
         (a\cdot b)\leftharpoonup c&=(a\leftharpoonup(b_{1}\rightharpoonup c_{1}))\cdot (b_{2}\leftharpoonup c_{2}),\\
        \label{braided-commutativity}  a\cdot b &= (a_1\rightharpoonup b_1)\cdot (a_2\leftharpoonup b_2).
    \end{align}
    A \emph{morphism of matched pairs of actions} between $(H,\rightharpoonup_H, \leftharpoonup_H)$ and $(K,\rightharpoonup_K, \leftharpoonup_K)$ is a morphism of bialgebras $H\to K$ that intertwines the two left actions and the two right actions, respectively, in the following sense:
    \[f(a\rightharpoonup_H b) = f(a)\rightharpoonup_K f(b),\quad f(a\leftharpoonup_H b) = f(a)\leftharpoonup_K f(b). \] The category of matched pairs of actions (in $\mathrm{Vec}_\Bbbk$) is denoted by $\mathrm{MP}(\mathrm{Vec}_\Bbbk)$.
\end{definition}
Given a matched pair of actions $(H,\rightharpoonup,\leftharpoonup)$ on a Hopf algebra $H$, the following equality is always satisfied
\begin{equation}\label{MP5}
    (a_{1}\rightharpoonup b_{1})\otimes(a_{2}\leftharpoonup b_{2})=(a_{2}\rightharpoonup b_{2})\otimes(a_{1}\leftharpoonup b_{1}),
\end{equation}
see \cite[Corollary 3.7]{FS}. Thus, matched pair of actions on a Hopf algebra form a subclass of matched pair of Hopf algebras in the sense of \cite[Definition 7.2.1]{Majid2}. \medskip



We also recall that in \cite[Definition 4.1]{FS} we introduced the category of Yetter--Drinfeld 1-cocycles and in \cite[Theorem 4.3]{FS} we proved that a subcategory of it is equivalent to the category of Yetter--Drinfeld braces. We will give more details about Yetter--Drinfeld 1-cocycles in the last section. 


\section{Yetter--Drinfeld post-Hopf algebras}\label{sec:YetterDrinfeldpostHopf}

\noindent The notion of \textit{post-Hopf algebra} is introduced in \cite[Definition 2.1]{YYT} and in \cite[Theorem 2.7]{YYT} it is shown that the subspace of primitive elements of a post-Hopf algebra is a post-Lie algebra. Moreover, in \cite[Theorem 2.13]{YYT2} (where \cite{YYT2} is the arXiv version of \cite{YYT}) it was also proven that, in the cocommutative case, one can obtain a Hopf brace from a post-Hopf algebra and vice versa. Here we introduce a generalisation of the notion of post-Hopf algebra, which we call \textit{Yetter--Drinfeld post-Hopf algebra}, that coincides with a post-Hopf algebra in the cocommutative case. The category of Yetter--Drinfeld post-Hopf algebras will turn out to be isomorphic to the category of Yetter--Drinfeld braces given in \cite{FS}.

\begin{definition}
    A \textbf{Yetter--Drinfeld post-Hopf algebra} $(H,\cdot,1,\Delta,\epsilon,S,\rightharpoonup)$, also denoted as a pair $(H,\rightharpoonup)$, is the datum of an algebra $(H,\cdot,1)$, a coalgebra $(H,\Delta,\epsilon)$, a morphism $S:H\to H$ such that $x_{1}\cdot S(x_{2})=S(x_{1})\cdot x_{2}=\epsilon(x)1_{H}$ for all $x\in H$ and a morphism of coalgebras $\rightharpoonup:H\ot H\to H$ satisfying the following equalities, for any $x,y,z\in H$:
\begin{equation}\label{dotlinear}
x\rightharpoonup(y\cdot z)=(x_{1}\rightharpoonup y)\cdot(x_{2}\rightharpoonup z),
\end{equation}    
\begin{equation}\label{deformedassociativityright}
x\rightharpoonup(y\rightharpoonup z)=\big(x_{1}\cdot(x_{2}\rightharpoonup y)\big)\rightharpoonup z.
\end{equation} 
Moreover, the morphism $\alpha_{\rightharpoonup}:H\to\mathrm{End}(H)$ defined by
\[
(\alpha_{\rightharpoonup}x)(y):=x\rightharpoonup y,\ \text{for all}\ x,y\in H,
\]
is convolution invertible in $\mathrm{Hom}(H,\mathrm{End}(H))$, i.e. there exists a unique morphism $\beta_{\rightharpoonup}:H\to\mathrm{End}(H)$ such that 
\begin{equation}\label{alphainvconv}
(\alpha_{\rightharpoonup}x_{1})\circ(\beta_{\rightharpoonup}x_{2})=(\beta_{\rightharpoonup}x_{1})\circ(\alpha_{\rightharpoonup}x_{2})=\epsilon(x)\mathrm{Id}_{H},\quad \text{for all}\ x\in H.
\end{equation}
Moreover, $\epsilon$ is a morphism of algebras, i.e. $\epsilon(a\cdot b)=\epsilon(a)\epsilon(b)$ and $\epsilon(1_{H})=1_{\Bbbk}$, $\Delta(1_{H})=1_{H}\otimes1_{H}$ and the following compatibility condition between $\Delta$ and $\cdot$ holds true:
\begin{equation}\label{comp.Deltadot}
\Delta(x\cdot y)
=\Big(x_{1}\cdot\big(\alpha_{\rightharpoonup}x_{2}(\beta_{\rightharpoonup}x_{4}(y_{1}))\big)\Big)\otimes\big(x_{3}\cdot y_{2}\big).
\end{equation}
Finally, defined 
\begin{equation}\label{defbullet}
x\bullet_{\rightharpoonup}y:=x_{1}\cdot(x_{2}\rightharpoonup y),
\end{equation}
\begin{equation}\label{defantipode}
    S_{\rightharpoonup}(x):=\beta_{\rightharpoonup}x_{1}(S(x_{2})),
\end{equation}
and 
\begin{equation}\label{defleftharp}
x\leftharpoonup y:=S_{\rightharpoonup}(x_{1}\rightharpoonup y_{1})\bullet_{\rightharpoonup}x_{2}\bullet_{\rightharpoonup}y_{2}, 
\end{equation}
we require that $S_{\rightharpoonup}$ is anti-comultiplicative, i.e. it satisfies $\Delta S_{\rightharpoonup}(x)=S_{\rightharpoonup}(x_{2})\otimes S_{\rightharpoonup}(x_{1})$, and the pair $(\rightharpoonup,\leftharpoonup)$ satisfies \eqref{MP5}.

A \textit{morphism of Yetter--Drinfeld post-Hopf algebras} from $(H,\rightharpoonup)$ to $(H',\rightharpoonup')$ is a morphism of algebras and coalgebras $g:H\to H'$ satisfying
\begin{equation}\label{morphismYDpost}
g(x\rightharpoonup y)=g(x)\rightharpoonup'g(y),\quad \text{for all}\ x,y\in H.
\end{equation}
We denote the category of Yetter--Drinfeld post-Hopf algebras and morphisms of Yetter--Drinfeld post-Hopf algebras by $\mathcal{YD}\mathrm{PH}(\mathrm{Vec}_{\Bbbk})$. 
\end{definition}

\begin{remark}
    We use notations $\bullet_{\rightharpoonup}$ and $S_{\rightharpoonup}$, similarly to those used in \cite[Theorem 2.5]{YYT}. Let us observe that one can recover $\rightharpoonup$ as 
\begin{equation}\label{harpfromS}
    x\rightharpoonup y=S(x_{1})\cdot x_{2}\cdot(x_{3}\rightharpoonup y)\overset{\eqref{defbullet}}{=}S(x_{1})\cdot(x_{2}\bullet_{\rightharpoonup}y)
\end{equation}
and rewrite \eqref{deformedassociativityright} as
\begin{equation}\label{associativity}
x\rightharpoonup(y\rightharpoonup z)=
(x\bullet_{\rightharpoonup}y)\rightharpoonup z.
\end{equation}
\end{remark}
\begin{remark}
Suppose $H$ is cocommutative. Then, we obtain 
\[
\begin{split}
\Delta(x\cdot y)&\overset{\eqref{comp.Deltadot}}{=}\Big(x_{1}\cdot\big(\alpha_{\rightharpoonup}x_{2}(\beta_{\rightharpoonup}x_{4}(y_{1}))\big)\Big)\otimes\big(x_{3}\cdot y_{2}\big)=\Big(x_{1}\cdot\big(\alpha_{\rightharpoonup}x_{2}(\beta_{\rightharpoonup}x_{3}(y_{1}))\big)\Big)\otimes\big(x_{4}\cdot y_{2}\big)\\&\overset{\eqref{alphainvconv}}{=}(x_{1}\cdot y_{1})\otimes(x_{2}\cdot y_{2}),
\end{split},
\]
hence $H$ becomes a standard Hopf algebra and $(H,\rightharpoonup)$ is a post-Hopf algebra, see \cite[Definition 2.1]{YYT}. Moreover, clearly, \eqref{MP5} is automatically satisfied and, as we will observe later, $S_{\rightharpoonup}$ becomes comultiplicative.
\end{remark}

One can prove that the same properties given in \cite[Lemma 2.4]{YYT} hold for Yetter--Drinfeld post-Hopf algebras. We include the proof for the sake of completeness, showing that there are no
problems with this more general definition.

\begin{lemma}
    Let $(H,\rightharpoonup)$ be a Yetter--Drinfeld post-Hopf algebra. Then, the following properties are satisfied, for all $x,y\in H$:
\begin{equation}\label{ulinear}
    x\rightharpoonup1=\epsilon(x)1,
\end{equation}
\begin{equation}\label{harpoonunit}
    1\rightharpoonup x=x,
\end{equation}  
\begin{equation}\label{Slinear}
S(x\rightharpoonup y)=x\rightharpoonup S(y).
\end{equation} 
\end{lemma}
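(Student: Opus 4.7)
All three identities follow from uniqueness-of-convolution-inverse arguments in suitable convolution algebras, with the axiom \eqref{alphainvconv} as the crucial input that would be automatic in the ordinary post-Hopf setting.

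For \eqref{ulinear}, I write $u(x):=x\rightharpoonup 1$. Setting $y=1$ in \eqref{dotlinear} expresses $\alpha_{\rightharpoonup}$ as a convolution product $(\lambda\circ u)*\alpha_{\rightharpoonup}$ in $\mathrm{Hom}(H,\mathrm{End}(H))$, where $\lambda:H\to\mathrm{End}(H)$ sends $a$ to left multiplication by $a$ and the convolution unit is $\eta(x)=\epsilon(x)\mathrm{Id}_H$. Convolving on the right by $\beta_{\rightharpoonup}$, using associativity together with \eqref{alphainvconv} collapses this to $\lambda\circ u = \eta$, that is, $u(x)\cdot y = \epsilon(x)\, y$ for all $y\in H$; evaluating at $y=1$ yields \eqref{ulinear}. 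For \eqref{harpoonunit}, specialising \eqref{deformedassociativityright} to $x=y=1$ and using $\Delta(1)=1\otimes 1$ together with the just-established $1\rightharpoonup 1 = 1$ shows $\alpha_{\rightharpoonup}(1)\circ\alpha_{\rightharpoonup}(1) = \alpha_{\rightharpoonup}(1)$, while evaluating \eqref{alphainvconv} at $x=1$ shows $\alpha_{\rightharpoonup}(1)$ is invertible with inverse $\beta_{\rightharpoonup}(1)$; an invertible idempotent in $\mathrm{End}(H)$ is the identity.

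For \eqref{Slinear}, I would regard $\rightharpoonup$ as an element $\Phi$ of the convolution algebra $\mathrm{Hom}(H\otimes H,H)$, using the tensor comultiplication on $H\otimes H$. Since $\rightharpoonup$ is a coalgebra morphism, $\Psi(x,y):=S(x\rightharpoonup y)$ is a two-sided convolution inverse of $\Phi$, directly from the antipode axiom for $S$ on $H$. On the other hand, using \eqref{dotlinear} followed by \eqref{ulinear},
\[
(\Phi*\Psi')(x,y) = (x_1\rightharpoonup y_1)\cdot(x_2\rightharpoonup S(y_2)) = x\rightharpoonup(y_1\cdot S(y_2)) = \epsilon(y)(x\rightharpoonup 1) = \epsilon(x)\epsilon(y)\cdot 1,
\]
so $\Psi'(x,y):= x\rightharpoonup S(y)$ is also a right convolution inverse of $\Phi$, and uniqueness of convolution inverses forces $\Psi=\Psi'$.

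The main obstacle is conceptual: recognising that \eqref{ulinear} and \eqref{Slinear} are uniqueness statements in \emph{different} convolution algebras, and that \eqref{alphainvconv} is precisely the extra hypothesis needed (beyond the distributivity \eqref{dotlinear} and the associativity-like identity \eqref{deformedassociativityright}) to extract \eqref{ulinear}. Once the ambient algebras are chosen, all remaining computations are routine manipulations with the coalgebra-morphism and antipode axioms.
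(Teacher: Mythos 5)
Your proposal is correct, and the three identities are handled with varying degrees of overlap with the paper. For \eqref{harpoonunit} your argument (invertibility of $\alpha_{\rightharpoonup}1$ from \eqref{alphainvconv} at $x=1$, idempotency from \eqref{deformedassociativityright} with $x=y=1$ and $1\rightharpoonup 1=1$) is exactly the paper's proof. For \eqref{Slinear} your uniqueness-of-convolution-inverse argument in $\mathrm{Hom}(H\otimes H,H)$ is the paper's computation in abstract form: the paper simply writes out the sandwich $S(x_{1}\rightharpoonup y_{1})\cdot(x_{2}\rightharpoonup y_{2})\cdot(x_{3}\rightharpoonup S(y_{3}))$ and evaluates it in two ways, which is the unwound proof that a right inverse coincides with a two-sided inverse; both versions use only that $\rightharpoonup$ is a coalgebra morphism, \eqref{dotlinear}, \eqref{ulinear} and the convolution-inverse property of $S$. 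The genuinely different step is \eqref{ulinear}: you prove it by exhibiting $\alpha_{\rightharpoonup}=(\lambda\circ u)*\alpha_{\rightharpoonup}$ in $\mathrm{Hom}(H,\mathrm{End}(H))$ and cancelling with $\beta_{\rightharpoonup}$ via \eqref{alphainvconv}, which in fact yields the stronger statement $(x\rightharpoonup 1)\cdot y=\epsilon(x)y$ for all $y$; the paper instead never uses $\beta_{\rightharpoonup}$ here, but sandwiches with $S$, writing $x\rightharpoonup 1=(x_{1}\rightharpoonup 1)\cdot(x_{2}\rightharpoonup 1)\cdot S(x_{3}\rightharpoonup 1)$ and applying \eqref{dotlinear} and the convolution-inverse property of $S$. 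Your cancellation is valid (the convolution algebra $\mathrm{Hom}(H,\mathrm{End}(H))$ is associative and unital, and $(\lambda\circ u)*\eta=\lambda\circ u$), but your closing remark that \eqref{alphainvconv} is \emph{precisely} the extra hypothesis needed to extract \eqref{ulinear} is not accurate as a necessity claim: the paper's proof shows \eqref{ulinear} already follows from \eqref{dotlinear}, the coalgebra-morphism property of $\rightharpoonup$, and the assumption that $S$ is the convolution inverse of $\mathrm{Id}_{H}$, without invoking $\beta_{\rightharpoonup}$ at all. This does not affect the correctness of your proof; it only means the two arguments draw on different parts of the definition, and yours has the small bonus of establishing $\lambda\circ u=\eta$ along the way.
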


\begin{proof}
    Since $\rightharpoonup$ is a morphism of coalgebras, we have
\[
x\rightharpoonup1=(x_{1}\rightharpoonup1)\epsilon(x_{2}\rightharpoonup1)=(x_{1}\rightharpoonup1)\cdot(x_{2}\rightharpoonup1)\cdot S(x_{3}\rightharpoonup1)\overset{\eqref{dotlinear}}{=}(x_{1}\rightharpoonup1)\cdot S(x_{2}\rightharpoonup1)=\epsilon(x\rightharpoonup1)1=\epsilon(x)1.
\]
By \eqref{alphainvconv} we have $(\alpha_{\rightharpoonup}1)\circ(\beta_{\rightharpoonup}1)=(\beta_{\rightharpoonup}1)\circ(\alpha_{\rightharpoonup}1)=\mathrm{Id}_{H}$, so $\alpha_{\rightharpoonup}1$ is an automorphism of $H$ with inverse given by $\beta_{\rightharpoonup}1$. Moreover, we have
\[
(\alpha_{\rightharpoonup}1)^{2}(x)=1\rightharpoonup(1\rightharpoonup x)\overset{\eqref{deformedassociativityright}}{=}(1\rightharpoonup1)\rightharpoonup x\overset{\eqref{ulinear}}{=}1\rightharpoonup x=(\alpha_{\rightharpoonup}1)(x),
\]
hence $1\rightharpoonup x=(\alpha_{\rightharpoonup}1)(x)=x$ (and so also $\beta_{\rightharpoonup}1=\mathrm{Id}$). Finally, we have
\[
\begin{split}
    S(x\rightharpoonup y)&=S(x_{1}\rightharpoonup y_{1})\epsilon(x_{2})\epsilon(y_{2})\overset{\eqref{ulinear}}{=}S(x_{1}\rightharpoonup y_{1})\cdot(x_{2}\rightharpoonup\epsilon(y_{2})1)\\&=S(x_{1}\rightharpoonup y_{1})\cdot\big(x_{2}\rightharpoonup y_{2}\cdot S(y_{3})\big)\overset{\eqref{dotlinear}}{=}S(x_{1}\rightharpoonup y_{1})\cdot(x_{2}\rightharpoonup y_{2})\cdot(x_{3}\rightharpoonup S(y_{3}))\\&=\epsilon(x_{1}\rightharpoonup y_{1})(x_{2}\rightharpoonup S(y_{2}))=
    x\rightharpoonup S(y).
\end{split}
\]
\end{proof}

\begin{remark}
Given a Yetter--Drinfeld post-Hopf algebra $(H,\rightharpoonup)$ we know by definition that $\alpha_{\rightharpoonup}$ has convolution inverse given by $\beta_{\rightharpoonup}$. We prove that $\alpha_{\rightharpoonup}S_{\rightharpoonup}$ is a right convolution inverse of $\alpha_{\rightharpoonup}$ in order to obtain $\beta_{\rightharpoonup}=\alpha_{\rightharpoonup}S_{\rightharpoonup}$. We compute
\[
\begin{split}
\alpha_{\rightharpoonup}x_{1}(\alpha_{\rightharpoonup}S_{\rightharpoonup}(x_{2})(a))&=
x_{1}\rightharpoonup(S_{\rightharpoonup}(x_{2})\rightharpoonup a)\overset{\eqref{deformedassociativityright}}{=}(x_{1}\cdot(x_{2}\rightharpoonup S_{\rightharpoonup}(x_{3})))\rightharpoonup a\\&\overset{\eqref{defantipode}}{=}\Big(x_{1}\cdot\big(\alpha_{\rightharpoonup}x_{2}(\beta_{\rightharpoonup}x_{3}(S(x_{4})))\big)\Big)\rightharpoonup a\overset{\eqref{alphainvconv}}{=}(x_{1}\cdot S(x_{2}))\rightharpoonup a\overset{\eqref{harpoonunit}}{=}\epsilon(x)a,
\end{split}
\]
thus we get
\begin{equation}\label{betarightharp}
\alpha_{\rightharpoonup}S_{\rightharpoonup}=\beta_{\rightharpoonup}.
\end{equation}
\end{remark}

\begin{remark}\label{Hdotmoncomon}
Notice that, from \eqref{associativity} and \eqref{harpoonunit}, we have that $H$ is an object in $_{H_{\rightharpoonup}}\mm$, the category of left $H_{\rightharpoonup}$-modules. Moreover, from \eqref{dotlinear} and \eqref{ulinear}, we have that $(H,\cdot,1)$ is in $\mathrm{Mon}(_{H_{\rightharpoonup}}\mm)$. Recall also that, by definition of Yetter--Drinfeld post-Hopf algebra, $\rightharpoonup$ is a morphism of coalgebras, i.e. $(H,\Delta,\epsilon)$ is in $\mathrm{Comon}(_{H_{\rightharpoonup}}\mm)$.  
\end{remark}

We can prove the following result.

\begin{lemma}
The following equations are satisfied, for all $x\in H$:
\begin{equation}\label{deltalpha}
\Delta\circ(\alpha_{\rightharpoonup}x)=(\alpha_{\rightharpoonup}x_{1}\otimes\alpha_{\rightharpoonup}x_{2})\circ\Delta,
\end{equation}
\begin{equation}\label{deltabeta}
\Delta\circ(\beta_{\rightharpoonup}x)=(\beta_{\rightharpoonup}x_{2}\otimes\beta_{\rightharpoonup}x_{1})\circ\Delta,
\end{equation}
\begin{equation}\label{dotalpha}
(\alpha_{\rightharpoonup}x)\circ m_{\cdot}=m_{\cdot}\circ(\alpha_{\rightharpoonup}x_{1}\otimes\alpha_{\rightharpoonup}x_{2}),
\end{equation}
\begin{equation}\label{dotbeta}
(\beta_{\rightharpoonup}x)\circ m_{\cdot}=m_{\cdot}\circ(\beta_{\rightharpoonup}x_{2}\otimes\beta_{\rightharpoonup}x_{1}).
\end{equation}
\end{lemma}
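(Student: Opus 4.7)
My plan is to split the four equations into two essentially trivial ones and two that require a short argument, then to reduce the latter pair to the former via the previously established identity \eqref{betarightharp} and the anti-comultiplicativity of $S_{\rightharpoonup}$ built into the definition of a Yetter--Drinfeld post-Hopf algebra.

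Equation \eqref{deltalpha} should fall straight out of the fact that $\rightharpoonup\colon H\otimes H\to H$ is, by definition, a morphism of coalgebras: evaluated at $x\otimes y$, this yields $\Delta(x\rightharpoonup y)=(x_{1}\rightharpoonup y_{1})\otimes(x_{2}\rightharpoonup y_{2})$, which is \eqref{deltalpha} once one reads both sides through the definition of $\alpha_{\rightharpoonup}$. Equation \eqref{dotalpha} is, under the same translation, a direct restatement of \eqref{dotlinear}, so no further work is needed there.

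For \eqref{deltabeta}, I would use \eqref{betarightharp} to replace $\beta_{\rightharpoonup}x(y)$ by $S_{\rightharpoonup}(x)\rightharpoonup y$, apply \eqref{deltalpha} to the element $S_{\rightharpoonup}(x)$ to obtain $\Delta\bigl(S_{\rightharpoonup}(x)\rightharpoonup y\bigr)=\bigl(S_{\rightharpoonup}(x)_{1}\rightharpoonup y_{1}\bigr)\otimes\bigl(S_{\rightharpoonup}(x)_{2}\rightharpoonup y_{2}\bigr)$, and finally invoke the anti-comultiplicativity $\Delta S_{\rightharpoonup}(x)=S_{\rightharpoonup}(x_{2})\otimes S_{\rightharpoonup}(x_{1})$ to rewrite this as $\bigl(S_{\rightharpoonup}(x_{2})\rightharpoonup y_{1}\bigr)\otimes\bigl(S_{\rightharpoonup}(x_{1})\rightharpoonup y_{2}\bigr)=\beta_{\rightharpoonup}x_{2}(y_{1})\otimes\beta_{\rightharpoonup}x_{1}(y_{2})$. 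The proof of \eqref{dotbeta} will be strictly parallel: write $\beta_{\rightharpoonup}x(a\cdot b)=S_{\rightharpoonup}(x)\rightharpoonup(a\cdot b)$ via \eqref{betarightharp}, split this through \eqref{dotlinear} into $\bigl(S_{\rightharpoonup}(x)_{1}\rightharpoonup a\bigr)\cdot\bigl(S_{\rightharpoonup}(x)_{2}\rightharpoonup b\bigr)$, and rearrange using the anti-comultiplicativity of $S_{\rightharpoonup}$ and the definition of $\beta_{\rightharpoonup}$ to land on $\beta_{\rightharpoonup}x_{2}(a)\cdot\beta_{\rightharpoonup}x_{1}(b)$.

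The main pitfall I would anticipate is the temptation to prove \eqref{deltabeta} and \eqref{dotbeta} directly from the convolution identity \eqref{alphainvconv}: one quickly produces expressions of the form $\alpha_{\rightharpoonup}x_{1}\beta_{\rightharpoonup}x_{4}\otimes\alpha_{\rightharpoonup}x_{2}\beta_{\rightharpoonup}x_{3}$ with crossed Sweedler pairings that refuse to collapse without some form of cocommutativity. The route through \eqref{betarightharp} sidesteps this entirely, since the Sweedler twist on the right-hand sides of \eqref{deltabeta} and \eqref{dotbeta} is precisely the twist supplied by the anti-comultiplicativity of $S_{\rightharpoonup}$.
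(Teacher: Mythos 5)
Your proof is correct, and it is logically sound within the paper's ordering: \eqref{betarightharp} and the anti-comultiplicativity of $S_{\rightharpoonup}$ (the latter being an axiom of the definition) are both available before this lemma, so reducing \eqref{deltabeta} and \eqref{dotbeta} to \eqref{deltalpha} and \eqref{dotlinear} applied to $S_{\rightharpoonup}(x)$ is legitimate and gives exactly the twisted right-hand sides. The paper, however, takes a different route for the two $\beta_{\rightharpoonup}$-identities: it never invokes \eqref{betarightharp} nor the anti-comultiplicativity axiom, but instead runs the standard convolution-algebra argument (the same mechanism by which an antipode is shown to be an anti-coalgebra map), deriving \eqref{deltabeta} purely from \eqref{alphainvconv} and \eqref{deltalpha} through two intermediate identities, and obtaining \eqref{dotbeta} as the dual computation. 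This means your anticipated ``pitfall'' is not actually an obstruction: the crossed Sweedler legs do collapse using only \eqref{alphainvconv}, with no cocommutativity needed, precisely because one inserts $\beta_{\rightharpoonup}x_{1}\alpha_{\rightharpoonup}x_{2}=\epsilon(x)\mathrm{Id}$ strategically and cancels adjacent pairs. The trade-off is this: your argument is shorter and exploits the axioms already packaged into the definition, while the paper's argument shows that \eqref{deltabeta} and \eqref{dotbeta} are formal consequences of convolution invertibility plus \eqref{deltalpha}/\eqref{dotalpha} alone, independent of the anti-comultiplicativity hypothesis --- which is what allows the subsequent remark to use \eqref{deltabeta} to translate that hypothesis into an equivalent condition on $\beta_{\rightharpoonup}$ as a genuinely separate piece of information.
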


\begin{proof}
First we compute
\[
\Delta((\alpha_{\rightharpoonup}x)(y))=\Delta(x\rightharpoonup y)=(x_{1}\rightharpoonup y_{1})\otimes(x_{2}\rightharpoonup y_{2})=(\alpha_{\rightharpoonup}x_{1})(y_{1})\otimes(\alpha_{\rightharpoonup}x_{2})(y_{2})=(\alpha_{\rightharpoonup}x_{1}\otimes\alpha_{\rightharpoonup}x_{2})\Delta(y),
\]
hence \eqref{deltalpha} holds true. Thus, we have
\begin{equation}\label{teccond}
\begin{split}
(\alpha_{\rightharpoonup}x_{1}\otimes\alpha_{\rightharpoonup}x_{3})(\beta_{\rightharpoonup}x_{2}\otimes\beta_{\rightharpoonup}x_{4})\Delta(y)&\overset{\eqref{alphainvconv}}{=}\epsilon(x_{1})\epsilon(x_{2})\Delta(y)=\Delta(\epsilon(x)y)\overset{\eqref{alphainvconv}}{=}\Delta(\alpha_{\rightharpoonup}x_{1}(\beta_{\rightharpoonup}x_{2}(y)))\\&\overset{\eqref{deltalpha}}{=}(\alpha_{\rightharpoonup}x_{1}\otimes\alpha_{\rightharpoonup}x_{2})\Delta(\beta_{\rightharpoonup}x_{3}(y)),
\end{split}
\end{equation}
hence 
\begin{equation}\label{teccond2}
\begin{split}
(\mathrm{Id}\otimes\alpha_{\rightharpoonup}x_{1})\Delta(\beta_{\rightharpoonup}x_{2}(y))&\overset{\eqref{alphainvconv}}{=}(\beta_{\rightharpoonup}x_{1}\alpha_{\rightharpoonup}x_{2}\otimes\alpha_{\rightharpoonup}x_{3})\Delta(\beta_{\rightharpoonup}x_{4}(y))\\&\overset{\eqref{teccond}}{=}(\beta_{\rightharpoonup}x_{1}\alpha_{\rightharpoonup}x_{2}\otimes\alpha_{\rightharpoonup}x_{4})(\beta_{\rightharpoonup}x_{3}\otimes\beta_{\rightharpoonup}x_{5})\Delta(y)\\&\overset{\eqref{alphainvconv}}{=}(\mathrm{Id}\otimes\alpha_{\rightharpoonup}x_{2})(\beta_{\rightharpoonup}x_{1}\otimes\beta_{\rightharpoonup}x_{3})\Delta(y)
\end{split}
\end{equation}
and then we obtain
\[
\begin{split}
\Delta((\beta_{\rightharpoonup}x)(y))&\overset{\eqref{alphainvconv}}{=}(\mathrm{Id}\otimes\beta_{\rightharpoonup}x_{1}\alpha_{\rightharpoonup}x_{2})\Delta(\beta_{\rightharpoonup}x_{3}(y))\\&\overset{\eqref{teccond2}}{=}(\mathrm{Id}\otimes\beta_{\rightharpoonup}x_{1}\alpha_{\rightharpoonup}x_{3})(\beta_{\rightharpoonup}x_{2}\otimes\beta_{\rightharpoonup}x_{4})\Delta(y)\\&\overset{\eqref{alphainvconv}}{=}(\beta_{\rightharpoonup}x_{2}\otimes\beta_{\rightharpoonup}x_{1})\Delta(y).
\end{split}
\]
Hence \eqref{deltabeta} is satisfied. Moreover, we have
\[
(\alpha_{\rightharpoonup}x)m_{\cdot}(a\otimes b)=x\rightharpoonup(a\cdot b)\overset{\eqref{dotlinear}}{=}(x_{1}\rightharpoonup a)\cdot(x_{2}\rightharpoonup b)=m_{\cdot}(\alpha_{\rightharpoonup}x_{1}\otimes\alpha_{\rightharpoonup}x_{2})(a\otimes b),
\]
so \eqref{dotalpha} holds true. The proof of \eqref{dotbeta} is the dual of the proof of \eqref{deltabeta}.
\end{proof}

\begin{remark}
Notice that 
\[
\Delta(S_{\rightharpoonup}(x))\overset{\eqref{defantipode}}{=}\Delta\beta_{\rightharpoonup}x_{1}(S(x_{2}))\overset{\eqref{deltabeta}}{=}(\beta_{\rightharpoonup}x_{2}\otimes\beta_{\rightharpoonup}x_{1})\Delta(S(x_{3}))=\beta_{\rightharpoonup}x_{2}(S(x_{4}))\otimes\beta_{\rightharpoonup}x_{1}(S(x_{3})),
\]
hence the fact that $S_{\rightharpoonup}$ is anti-comultiplicative reads in terms on $\beta_{\rightharpoonup}$ as:
\[
\beta_{\rightharpoonup}x_{2}(S(x_{4}))\otimes\beta_{\rightharpoonup}x_{1}(S(x_{3}))=\beta_{\rightharpoonup}x_{3}(S(x_{4}))\otimes\beta_{\rightharpoonup}x_{1}(S(x_{2})).
\]
\end{remark}

\begin{remark}
    Notice that
\[
\beta_{\rightharpoonup}x(1)=\beta_{\rightharpoonup}x_{1}(\epsilon(x_{2})1)\overset{\eqref{ulinear}}{=}\beta_{\rightharpoonup}x_{1}(\alpha_{\rightharpoonup}x_{2}(1))\overset{\eqref{alphainvconv}}{=}\epsilon(x)1
\]
and then
\begin{equation}\label{propertyantipode2}
    \beta_{\rightharpoonup}x_{2}(S(x_{3}))\cdot\beta_{\rightharpoonup}x_{1}(x_{4})\overset{\eqref{dotbeta}}{=}\beta_{\rightharpoonup}x_{1}(S(x_{2})\cdot x_{3})=\beta_{\rightharpoonup}x(1)=\epsilon(x)1,
\end{equation}
for all $x\in H$. Observe also that if $H$ is cocommutative we have $\Delta\circ(\beta_{\rightharpoonup}x)=(\beta_{\rightharpoonup}x_{1}\otimes\beta_{\rightharpoonup}x_{2})\Delta$ and $(\beta_{\rightharpoonup}x)\circ m_{\cdot}=m_{\cdot}\circ(\beta_{\rightharpoonup}x_{1}\otimes\beta_{\rightharpoonup}x_{2})$ for all $x\in H$ and moreover $S_{\rightharpoonup}$ is comultiplicative.
\end{remark}

The following result generalises \cite[Theorem 2.5]{YYT}, removing the hypothesis of cocommutativity.

\begin{proposition}\label{HharpHopf}
    Let $(H,\rightharpoonup)$ be a Yetter--Drinfeld post-Hopf algebra and define $\bullet_{\rightharpoonup}$ and $S_{\rightharpoonup}$ as in \eqref{defbullet} and \eqref{defantipode}, respectively. 
Then, $H_{\rightharpoonup}:=(H,\bullet_{\rightharpoonup},1,\Delta,\epsilon,S_{\rightharpoonup})$ is a Hopf algebra. We call $H_{\rightharpoonup}$ the subadjacent Hopf algebra in analogy with \cite{YYT}.
\end{proposition}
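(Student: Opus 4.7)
The plan is to verify that $H_{\rightharpoonup}$ is a bialgebra with antipode $S_{\rightharpoonup}$, working through the axioms in the order: unitality and counitality, then comultiplicativity of $\bullet_{\rightharpoonup}$, then associativity, then the antipode identities. The easy axioms come first: $1\bullet_{\rightharpoonup}y=y=y\bullet_{\rightharpoonup}1$ follow immediately from \eqref{harpoonunit} (together with $\Delta(1)=1\otimes1$) and from \eqref{ulinear}, respectively; multiplicativity of $\epsilon$, i.e.\ $\epsilon(x\bullet_{\rightharpoonup}y)=\epsilon(x)\epsilon(y)$, follows by expanding $\epsilon(x_{1}\cdot(x_{2}\rightharpoonup y))$ and using that $\epsilon$ is an algebra map for $\cdot$ while $\rightharpoonup$ is a morphism of coalgebras; and the relations $\Delta(1)=1\otimes1$, $\epsilon(1)=1$ are given by hypothesis.

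The core bialgebra check is the multiplicativity $\Delta(x\bullet_{\rightharpoonup}y)=(x_{1}\bullet_{\rightharpoonup}y_{1})\otimes(x_{2}\bullet_{\rightharpoonup}y_{2})$. I would apply \eqref{comp.Deltadot} to $x_{1}\cdot(x_{2}\rightharpoonup y)$ and then use that $\rightharpoonup$ is a morphism of coalgebras to expand the coproduct of $x_{2}\rightharpoonup y$. The resulting expression contains the composition $\alpha_{\rightharpoonup}(x_{2})\circ\beta_{\rightharpoonup}(x_{4})\circ\alpha_{\rightharpoonup}(x_{5})$ applied to $y_{1}$; collapsing the adjacent pair $\beta_{\rightharpoonup}(x_{4})\circ\alpha_{\rightharpoonup}(x_{5})$ via \eqref{alphainvconv} leaves $\alpha_{\rightharpoonup}(x_{2})(y_{1})=x_{2}\rightharpoonup y_{1}$, producing the claimed factorisation. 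With this in hand, associativity of $\bullet_{\rightharpoonup}$ follows quickly: I expand $(x\bullet_{\rightharpoonup}y)\bullet_{\rightharpoonup}z$ using the just-proved coproduct formula, rewrite the factor $(x_{3}\cdot(x_{4}\rightharpoonup y_{2}))\rightharpoonup z$ as $x_{3}\rightharpoonup(y_{2}\rightharpoonup z)$ via \eqref{deformedassociativityright} (equivalently \eqref{associativity}), and apply \eqref{dotlinear} in reverse to reassemble the inner product into $x_{2}\rightharpoonup(y_{1}\cdot(y_{2}\rightharpoonup z))=x_{2}\rightharpoonup(y\bullet_{\rightharpoonup}z)$, yielding $x\bullet_{\rightharpoonup}(y\bullet_{\rightharpoonup}z)$.

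For the antipode I would verify the two convolution identities separately. The right-sided property $x_{1}\bullet_{\rightharpoonup}S_{\rightharpoonup}(x_{2})=\epsilon(x)1$ is direct: unfolding with \eqref{defantipode} gives $x_{1}\cdot\alpha_{\rightharpoonup}(x_{2})(\beta_{\rightharpoonup}(x_{3})(S(x_{4})))$, which \eqref{alphainvconv} reduces to $x_{1}\cdot S(x_{2})=\epsilon(x)1$. The left-sided property $S_{\rightharpoonup}(x_{1})\bullet_{\rightharpoonup}x_{2}=\epsilon(x)1$ is the more delicate one and is where I expect the main obstacle. Expanding the left-hand side requires the coproduct of $S_{\rightharpoonup}(x_{1})$, which by the assumed anti-comultiplicativity equals $S_{\rightharpoonup}(x_{2})\otimes S_{\rightharpoonup}(x_{1})$; then \eqref{betarightharp} replaces $\alpha_{\rightharpoonup}(S_{\rightharpoonup}(x_{1}))(x_{3})$ by $\beta_{\rightharpoonup}(x_{1})(x_{3})$, at which point the expression becomes exactly the left-hand side of \eqref{propertyantipode2} and hence collapses to $\epsilon(x)1$. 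This last step simultaneously invokes the anti-comultiplicativity of $S_{\rightharpoonup}$, the identity $\beta_{\rightharpoonup}=\alpha_{\rightharpoonup}\circ S_{\rightharpoonup}$, and the auxiliary relation \eqref{propertyantipode2}, and is the genuinely new ingredient beyond the cocommutative case treated in \cite[Theorem 2.5]{YYT}.
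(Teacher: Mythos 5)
Your proposal is correct and follows essentially the same route as the paper: comultiplicativity of $\bullet_{\rightharpoonup}$ via \eqref{comp.Deltadot} and the cancellation \eqref{alphainvconv}, associativity via \eqref{deformedassociativityright} and \eqref{dotlinear} (you merely run the computation from $(x\bullet_{\rightharpoonup}y)\bullet_{\rightharpoonup}z$ to $x\bullet_{\rightharpoonup}(y\bullet_{\rightharpoonup}z)$ rather than the reverse), and the antipode identities exactly as in the paper, with the left-sided one resting on anti-comultiplicativity of $S_{\rightharpoonup}$, \eqref{betarightharp} and \eqref{propertyantipode2}. No gaps.
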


\begin{proof}
By definition of Yetter--Drinfeld post-Hopf algebra we already know that $\Delta(1_{H})=1_{H}\otimes1_{H}$ and $\epsilon(1_{H})=1_{\Bbbk}$. We show that $\Delta$ and $\epsilon$ are multiplicative with respect to $\bullet_{\rightharpoonup}$: 
\[
\begin{split}
\Delta(x\bullet_{\rightharpoonup}y)&\overset{\eqref{defbullet}}{=}\Delta(x_{1}\cdot(x_{2}\rightharpoonup y))\overset{\eqref{comp.Deltadot}}{=}\Bigg(x_{1}\cdot\Big(\alpha_{\rightharpoonup}x_{2}\big(\beta_{\rightharpoonup}x_{4}(x_{5}\rightharpoonup y_{1})\big)\Big)\Bigg)\otimes\big(x_{3}\cdot(x_{6}\rightharpoonup y_{2})\big)\\&\ =x_{1}\cdot\Big(\alpha_{\rightharpoonup}x_{2}\big(\beta_{\rightharpoonup}x_{4}(\alpha_{\rightharpoonup}x_{5}(y_{1}))\big)\Big)\otimes(x_{3}\cdot(x_{6}\rightharpoonup y_{2}))\\&\overset{\eqref{alphainvconv}}{=}(x_{1}\cdot(x_{2}\rightharpoonup y_{1}))\otimes(x_{3}\cdot(x_{4}\rightharpoonup  y_{2}))\\&\ =(x_{1}\bullet_{\rightharpoonup}y_{1})\otimes(x_{2}\bullet_{\rightharpoonup}y_{2})
\end{split}
\]
and
\[
\epsilon(x\bullet_{\rightharpoonup}y)\overset{\eqref{defbullet}}{=}\epsilon(x_{1}\cdot(x_{2}\rightharpoonup y))=\epsilon(x_{1})\epsilon(x_{2}\rightharpoonup y)=\epsilon(x_{1})\epsilon(x_{2})\epsilon(y)=\epsilon(x)\epsilon(y).
\]
We show that $(H,\bullet_{\rightharpoonup},1)$ is an algebra, so that $(H,\bullet_{\rightharpoonup},1,\Delta,\epsilon)$ is a bialgebra. We have
\[
1\bullet_{\rightharpoonup}x=1\cdot(1\rightharpoonup x)\overset{\eqref{harpoonunit}}{=}1\cdot x=x,\qquad x\bullet_{\rightharpoonup}1=x_{1}\cdot(x_{2}\rightharpoonup1)\overset{\eqref{ulinear}}{=}x_{1}\cdot\epsilon(x_{2})1=x
\]
and also
\[
\begin{split}
x\bullet_{\rightharpoonup}(y\bullet_{\rightharpoonup}z&)=x_{1}\cdot(x_{2}\rightharpoonup(y_{1}\cdot(y_{2}\rightharpoonup z)))\\&\overset{\eqref{dotlinear}}{=}x_{1}\cdot(x_{2}\rightharpoonup y_{1})\cdot(x_{3}\rightharpoonup(y_{2}\rightharpoonup z))\\&\overset{\eqref{deformedassociativityright}}{=}(x_{1}\cdot(x_{2}\rightharpoonup y_{1}))\cdot\big((x_{3}\cdot(x_{4}\rightharpoonup y_{2}))\rightharpoonup z\big)\\&\ =(x_{1}\bullet_{\rightharpoonup} y_{1})\cdot((x_{2}\bullet_{\rightharpoonup} y_{2})\rightharpoonup z)\\&\ =(x\bullet_{\rightharpoonup}y)\bullet_{\rightharpoonup}z,
\end{split}
\]
where the last equality follows from the fact that $\Delta$ is multiplicative with respect to $\bullet_{\rightharpoonup}$. Finally, we prove that $S_{\rightharpoonup}$ is an antipode. We compute
\[
\begin{split}
    x_{1}\bullet_{\rightharpoonup}S_{\rightharpoonup}(x_{2})&\overset{\eqref{defbullet}}{=}x_{1}\cdot(x_{2}\rightharpoonup S_{\rightharpoonup}(x_{3}))\overset{\eqref{defantipode}}{=}x_{1}\cdot\Big(\alpha_{\rightharpoonup}x_{2}\big(\beta_{\rightharpoonup}x_{3}(S(x_{4}))\big)\Big)=x_{1}\cdot S(x_{2})=\epsilon(x)1.
\end{split}
\]
Moreover, 
since $S_{\rightharpoonup}$ is anti-comultiplicative, we obtain 
\[
\begin{split}
S_{\rightharpoonup}(x_{1})\bullet_{\rightharpoonup}x_{2}&\overset{\eqref{defbullet}}{=}S_{\rightharpoonup}(x_{1})_{1}\cdot(S_{\rightharpoonup}(x_{1})_{2}\rightharpoonup x_{2})\\&\ =S_{\rightharpoonup}(x_{2})\cdot(S_{\rightharpoonup}(x_{1})\rightharpoonup x_{3})\\&\ =\beta_{\rightharpoonup}x_{2}(S(x_{3}))\cdot(\alpha_{\rightharpoonup}S_{\rightharpoonup}x_{1}(x_{4}))\\&\overset{\eqref{betarightharp}}{=}\beta_{\rightharpoonup}x_{2}(S(x_{3}))\cdot\beta_{\rightharpoonup}x_{1}(x_{4})\\&\overset{\eqref{propertyantipode2}}{=}\epsilon(x)1,
\end{split}
\]
hence $S_{\rightharpoonup}$ is an antipode and $(H,\bullet_{\rightharpoonup},1,\Delta,\epsilon,S_{\rightharpoonup})$ is a Hopf algebra. 
\end{proof}

\begin{remark}
Let us observe that, since $S_{\rightharpoonup}$ is an antipode, one can recover $\cdot$ as
\begin{equation}\label{dotfrombullet}
a_{1}\bullet_{\rightharpoonup}(S_{\rightharpoonup}(a_{2})\rightharpoonup b)=a_{1}\cdot(a_{2}\rightharpoonup (S_{\rightharpoonup}(a_{3})\rightharpoonup b))=a_{1}\cdot(a_{2}\bullet_{\rightharpoonup}S_{\rightharpoonup}(a_{3})\rightharpoonup b)=a\cdot b.
\end{equation}
\end{remark}

The relation between post-Hopf algebras and post-Lie algebras given in \cite[Theorem 2.7]{YYT} still remains true for Yetter--Drinfeld post-Hopf algebras as it is written in the following result. 

\begin{proposition}
     Let $(H,\rightharpoonup)$ be a Yetter--Drinfeld post-Hopf algebra. Then, its subspace of primitive elements $P(H)$ is a post-Lie algebra.
\end{proposition}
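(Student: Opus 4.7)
The plan is to verify three things for $x,y,z \in P(H)$: that $P(H)$ is closed under the $\cdot$-commutator $[x,y]:=x\cdot y - y\cdot x$, that $P(H)$ is closed under $\rightharpoonup$, and that the post-Lie axioms \eqref{comp.act-brac} and \eqref{postLiecondition2} hold.

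Closure under $\rightharpoonup$ is the easiest step: since $\rightharpoonup$ is a morphism of coalgebras, $\Delta(x\rightharpoonup y) = (x_1\rightharpoonup y_1)\otimes(x_2\rightharpoonup y_2)$, and for primitive $x,y$ this collapses to $(x\rightharpoonup y)\otimes 1 + 1\otimes(x\rightharpoonup y)$ via \eqref{ulinear} and \eqref{harpoonunit}.

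Closure under the commutator is the heart of the matter and I expect it to be the main obstacle, because the compatibility \eqref{comp.Deltadot} between $\Delta$ and $\cdot$ is twisted by $\alpha_{\rightharpoonup}$ and $\beta_{\rightharpoonup}$. The key preparatory observation is that evaluating \eqref{alphainvconv} on a primitive element $x$, and using $\alpha_{\rightharpoonup}1 = \beta_{\rightharpoonup}1 = \mathrm{Id}_H$, forces $\beta_{\rightharpoonup}(x) = -\alpha_{\rightharpoonup}(x) = -(x\rightharpoonup -)$. Expanding $\Delta(x\cdot y)$ via \eqref{comp.Deltadot} with $x$ primitive produces exactly four terms, according to which of the four iterated-coproduct slots of $x$ carries the non-identity component; the two contributions that involve $\alpha_{\rightharpoonup}(x)$ and $\beta_{\rightharpoonup}(x)$ are opposite by the above sign relation and therefore cancel. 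What survives is
\[
\Delta(x\cdot y) = (x\cdot y)\otimes 1 + 1\otimes(x\cdot y) + x\otimes y + y\otimes x,
\]
and antisymmetrising in $x,y$ yields primitivity of $[x,y]$. Associativity of $\cdot$ then gives the Jacobi identity for free, so $(P(H),[-,-])$ is a Lie algebra.

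For the post-Lie axioms I would simply specialise the defining identities of $\rightharpoonup$ to primitive inputs. Evaluating \eqref{dotlinear} at primitive $x$ gives $x\rightharpoonup(y\cdot z) = (x\rightharpoonup y)\cdot z + y\cdot(x\rightharpoonup z)$, so $\alpha_{\rightharpoonup}(x)$ is a derivation of $(H,\cdot)$, hence also of the commutator, which is precisely \eqref{comp.act-brac}. Evaluating \eqref{deformedassociativityright} at primitive $x$ gives $x\rightharpoonup(y\rightharpoonup z) = (x\cdot y)\rightharpoonup z + (x\rightharpoonup y)\rightharpoonup z$; subtracting the same relation with $x$ and $y$ swapped yields
\[
x\rightharpoonup(y\rightharpoonup z) - y\rightharpoonup(x\rightharpoonup z) = \big([x,y] + (x\rightharpoonup y) - (y\rightharpoonup x)\big)\rightharpoonup z,
\]
which is exactly \eqref{postLiecondition2}.
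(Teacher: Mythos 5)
Your proposal is correct, and for the two post-Lie axioms it follows the same route as the paper (which simply observes that the proof of \cite[Theorem 2.7]{YYT} carries over verbatim, using only \eqref{dotlinear}, \eqref{deformedassociativityright}, \eqref{ulinear}, \eqref{harpoonunit} and the fact that $\rightharpoonup$ is a coalgebra map): specialising \eqref{dotlinear} and \eqref{deformedassociativityright} to a primitive first argument and antisymmetrising is exactly the argument there. Where you genuinely diverge is on closure of $P(H)$ under the commutator $[x,y]=x\cdot y-y\cdot x$. The paper's proof does not verify this step explicitly; it is covered instead by Theorem \ref{HdotHopfYD}, which makes $(H,\cdot,1,\Delta,\epsilon,S)$ a Hopf algebra in $^{H_{\rightharpoonup}}_{H_{\rightharpoonup}}\mathcal{YD}$, together with the subsequent remark that $\sigma^{\mathcal{YD}}_{H,H}(x\otimes b)=b\otimes x$ when $x$ is primitive, so the braided commutator (whose primitivity is automatic in a braided Hopf algebra) coincides with the ordinary one. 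Your direct computation achieves the same thing more elementarily: evaluating \eqref{alphainvconv} on a primitive $x$, with $\alpha_{\rightharpoonup}1=\beta_{\rightharpoonup}1=\mathrm{Id}_{H}$, indeed gives $\beta_{\rightharpoonup}x=-\alpha_{\rightharpoonup}x$, and expanding \eqref{comp.Deltadot} over the four slots of the iterated coproduct of $x$ makes the $\alpha_{\rightharpoonup}x$ and $\beta_{\rightharpoonup}x$ terms cancel, leaving $\Delta(x\cdot y)=(x\cdot y)\otimes 1+1\otimes(x\cdot y)+x\otimes y+y\otimes x$, whence $[x,y]\in P(H)$. So your argument is self-contained at the level of the defining identities, at the cost of redoing by hand a cancellation the paper obtains structurally from the Yetter--Drinfeld Hopf-algebra framework; both are valid, and your explicit treatment of the closure step is the more careful reading of what actually needs checking in the non-cocommutative setting.
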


The proof is exactly the same of \cite[Theorem 2.7]{YYT}. Indeed, this only involves \eqref{dotlinear},\eqref{deformedassociativityright},\eqref{ulinear},\eqref{harpoonunit} and the fact that $\rightharpoonup$ is a morphism of coalgebras. This shows that the notion of Yetter--Drinfeld post-Hopf algebra is the natural one in the not-cocommutative setting.

\begin{invisible}
\begin{proof}
    The proof is the same of that given for \cite[Theorem 2.7]{YYT}, which we include for the sake of completeness. Since $\rightharpoonup$ is a morphism of coalgebras, for all $x,y\in P(H)$, we have
\[
\begin{split}
    \Delta(x\rightharpoonup y)&=(x_{1}\rightharpoonup y_{1})\otimes(x_{2}\rightharpoonup y_{2})\\&=(1\rightharpoonup1)\otimes(x\rightharpoonup y)+(1\rightharpoonup y)\otimes(x\rightharpoonup1)+(x\rightharpoonup1)\otimes(1\rightharpoonup y)+(x\rightharpoonup y)\otimes(1\rightharpoonup1)\\&\overset{\eqref{ulinear},\eqref{harpoonunit}}{=}1\otimes(x\rightharpoonup y)+(x\rightharpoonup y)\otimes1.
\end{split}
\]
Thus, we obtain a linear map $\rightharpoonup:P(H)\otimes P(H)\to P(H)$. For all $x,y\in P(H)$, we have
\begin{equation}\label{relationLiealgebra}
x\rightharpoonup(y\cdot z)\overset{\eqref{dotlinear}}{=}(1\rightharpoonup y)\cdot(x\rightharpoonup z)+(x\rightharpoonup y)\cdot(1\rightharpoonup z)\overset{\eqref{harpoonunit}}{=} y\cdot(x\rightharpoonup z)+(x\rightharpoonup y)\cdot z.
\end{equation}
Given the Lie algebra $(P(H),[\cdot,\cdot])$, where $[x,y]:=x\cdot y-y\cdot x$, we obtain
\[
\begin{split}
x\rightharpoonup[y,z]&=\big(x\rightharpoonup(y\cdot z)\big)-\big(x\rightharpoonup(z\cdot y)\big)\\&\overset{\eqref{relationLiealgebra}}{=} \big(y\cdot(x\rightharpoonup z)\big)+\big((x\rightharpoonup y)\cdot z\big)-\big(z\cdot(x\rightharpoonup y)\big)-\big((x\rightharpoonup z)\cdot y\big)\\&=[x\rightharpoonup y,z]+[y,x\rightharpoonup z],
\end{split}
\]
so \eqref{comp.act-brac} holds true. Moreover, we have
\begin{equation}\label{relationLiealgebra2}
x\rightharpoonup(y\rightharpoonup z)\overset{\eqref{deformedassociativityright}}{=}\Big(\big(1\cdot(x\rightharpoonup y)\big)\rightharpoonup z\Big)+\Big(\big(x\cdot(1\rightharpoonup y)\big)\rightharpoonup z\Big)\overset{\eqref{harpoonunit}}{=}\big((x\rightharpoonup y)\rightharpoonup z\big)+\big((x\cdot y)\rightharpoonup z\big),
\end{equation}
so we obtain
\[
\begin{split}
[x,y]\rightharpoonup z&=\big((x\cdot y)\rightharpoonup z\big)-\big((y\cdot x)\rightharpoonup z\big)\\&\overset{\eqref{relationLiealgebra2}}{=}\big(x\rightharpoonup(y\rightharpoonup z)\big)-\big((x\rightharpoonup y)\rightharpoonup z\big)-\big(y\rightharpoonup(x\rightharpoonup z)\big)+\big((y\rightharpoonup x)\rightharpoonup z\big),
\end{split}
\]
hence \eqref{postLiecondition2} is satisfied. Therefore, $(P(H),[\cdot,\cdot],\rightharpoonup)$ is a post-Lie algebra.
\end{proof}
\end{invisible}

\begin{remark}
    A kind of converse of the previous result holds. Indeed, it is known that, given a post-Lie algebra $(\mathfrak{g},[\cdot,\cdot],\rightharpoonup)$, the map $\rightharpoonup$ can be extended to its universal enveloping algebra and induces a subadjacent Hopf algebra structure isomorphic to the universal enveloping algebra of the subadjacent Lie algebra $\mathfrak{g}_{\rightharpoonup}$, see \cite[Proposition 3.1 and Theorem 3.4]{ELM}. In this case, since universal enveloping algebras are cocommutative, the definition of Yetter--Drinfeld post-Hopf algebra coincides with that of post-Hopf algebra and one can refer to \cite[Theorem 2.8]{YYT}. Observe also that one can easily obtain examples of cocommutative post-Hopf algebras which do not come from Lie algebras considering group algebras with action coming from Rota--Baxter operators on the base group, as it is done in \cite[Example 2.11]{YYT}.
\end{remark}

\begin{remark}
    In \cite[Definition 2.3]{YYT} a post-Hopf algebra $(H,\rightharpoonup)$ is called a \textit{pre-Hopf algebra} if $H$ is commutative. In this case $P(H)$ results to be a pre-Lie algebra since $[x,y]=0$ for all $x,y\in P(H)$. We will introduce the notion of \textit{Yetter--Drinfeld pre-Hopf algebra} later and it will coincide with the notion of pre-Hopf algebra in the cocommutative case.
\end{remark}

\subsection{Yetter--Drinfeld post-Hopf algebras and Yetter--Drinfeld braces}

We are ready to prove that the category of Yetter--Drinfeld post-Hopf algebras is isomorphic to the category of Yetter--Drinfeld braces. First we show some preliminary results.

\begin{lemma}
    Let $(H,\rightharpoonup)$ be a Yetter--Drinfeld post-Hopf algebra and define $\leftharpoonup$ as in \eqref{defleftharp}. Then, we have $\epsilon(a\leftharpoonup b)=\epsilon(a)\epsilon(b)$ and $(a_{1}\rightharpoonup b_{1})\bullet_{\rightharpoonup}(a_{2}\leftharpoonup b_{2})=a\bullet_{\rightharpoonup}b$. 
\end{lemma}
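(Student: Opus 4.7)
The plan is to read both identities off the antipode axioms in $H_{\rightharpoonup}$ together with the coalgebra-morphism property of $\rightharpoonup$, leveraging the Hopf algebra structure of $H_{\rightharpoonup}$ established in Proposition \ref{HharpHopf}. Neither identity requires the compatibility \eqref{comp.Deltadot}, the axioms \eqref{dotlinear}--\eqref{deformedassociativityright}, or \eqref{MP5}; what is truly being verified is that $\leftharpoonup$, viewed solely through \eqref{defleftharp}, behaves well with respect to $\bullet_{\rightharpoonup}$ and $\epsilon$.

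For the counit identity, I would apply $\epsilon$ to the defining expression \eqref{defleftharp}. Since $\epsilon$ is multiplicative with respect to $\bullet_{\rightharpoonup}$ (part of the bialgebra structure of $H_{\rightharpoonup}$ verified in Proposition \ref{HharpHopf}), the result factors as $\epsilon(S_{\rightharpoonup}(a_1\rightharpoonup b_1))\,\epsilon(a_2)\,\epsilon(b_2)$. The standard Hopf-algebraic fact $\epsilon\circ S_{\rightharpoonup}=\epsilon$ for $H_{\rightharpoonup}$ reduces the first factor to $\epsilon(a_1\rightharpoonup b_1)$, which equals $\epsilon(a_1)\epsilon(b_1)$ since $\rightharpoonup$ is a coalgebra morphism. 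Reassembling and using counitality twice yields $\epsilon(a)\epsilon(b)$.

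For the second identity, I would substitute \eqref{defleftharp} into the left-hand side and use the associativity of $\bullet_{\rightharpoonup}$ together with coassociativity of $\Delta$ to rewrite the expression as
\[
(a_1\rightharpoonup b_1)\bullet_{\rightharpoonup}S_{\rightharpoonup}(a_2\rightharpoonup b_2)\bullet_{\rightharpoonup}a_3\bullet_{\rightharpoonup}b_3.
\]
Since $\rightharpoonup$ is a coalgebra morphism, a single application of $\Delta$ to $a_1\rightharpoonup b_1$ produces $(a_1\rightharpoonup b_1)\otimes(a_2\rightharpoonup b_2)$ after renaming Sweedler components of $a$ and $b$. Hence the first two factors match the pattern $y_{(1)}\bullet_{\rightharpoonup}S_{\rightharpoonup}(y_{(2)})$ of the left antipode axiom in $H_{\rightharpoonup}$ with $y=a_1\rightharpoonup b_1$, and they collapse to $\epsilon(a_1\rightharpoonup b_1)1_H=\epsilon(a_1)\epsilon(b_1)1_H$. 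What remains is $\epsilon(a_1)\epsilon(b_1)\,a_2\bullet_{\rightharpoonup}b_2$, which contracts under counitality to $a\bullet_{\rightharpoonup}b$.

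The main (and only) obstacle I anticipate is Sweedler-index bookkeeping: \eqref{defleftharp} already involves two layers of coproduct, and applying the coalgebra-morphism property of $\rightharpoonup$ inside the computation requires a third, so one must be careful to invoke coassociativity consistently when relabelling. Once this is done, both identities reduce to one-line applications of antipode and counit axioms in $H_{\rightharpoonup}$.
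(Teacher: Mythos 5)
Your proposal is correct and follows essentially the same route as the paper: apply $\epsilon$ (respectively substitute \eqref{defleftharp}), use that $\epsilon$ and $\Delta$ are multiplicative for $\bullet_{\rightharpoonup}$, that $\rightharpoonup$ is a coalgebra morphism, and then the antipode and counit axioms of $H_{\rightharpoonup}$ from Proposition \ref{HharpHopf}. The only cosmetic remark is that the identity $y_{1}\bullet_{\rightharpoonup}S_{\rightharpoonup}(y_{2})=\epsilon(y)1$ you invoke is usually called the right antipode condition, but this does not affect the argument.
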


\begin{proof}
We compute
\[
\epsilon(a\leftharpoonup b)=\epsilon(S_{\rightharpoonup}(a_{1}\rightharpoonup b_{1})\bullet_{\rightharpoonup}a_{2}\bullet_{\rightharpoonup}b_{2})=\epsilon(S_{\rightharpoonup}(a_{1}\rightharpoonup b_{1}))\epsilon(a_{2})\epsilon(b_{2})=\epsilon(a\rightharpoonup b)=\epsilon(a)\epsilon(b)
\]
and also
\[
(a_{1}\rightharpoonup b_{1})\bullet_{\rightharpoonup}(a_{2}\leftharpoonup b_{2})=(a_{1}\rightharpoonup b_{1})\bullet_{\rightharpoonup} S_{\rightharpoonup}(a_{2}\rightharpoonup b_{2})\bullet_{\rightharpoonup}a_{3}\bullet_{\rightharpoonup}b_{3}=\epsilon(a_{1}\rightharpoonup b_{1})a_{2}\bullet_{\rightharpoonup}b_{2}=a\bullet_{\rightharpoonup}b.
\]
\end{proof}

\begin{remark}
By \cite[Lemma 3.6]{FS} we have that \eqref{MP5} is equivalent to $\leftharpoonup$ being a morphism of coalgebras, i.e. $\Delta(a\leftharpoonup b)=(a_{1}\leftharpoonup b_{1})\otimes(a_{2}\leftharpoonup b_{2})$
and also to 
\begin{equation}\label{equivalentcondition}
    (a_{1}\rightharpoonup b_{1})\bullet_{\rightharpoonup}(a_{3}\leftharpoonup b_{3})\otimes(a_{2}\rightharpoonup b_{2})=(a_{1}\bullet_{\rightharpoonup} b_{1})\otimes(a_{2}\rightharpoonup b_{2}).
\end{equation}
Let us also observe that, from the definition of $\leftharpoonup$ and the fact that $S_{\rightharpoonup}$ is the antipode of $H_{\rightharpoonup}$, we obtain 
\begin{equation}\label{Sharponharp}
S_{\rightharpoonup}(a\rightharpoonup b)=(a_{1}\leftharpoonup b_{1})\bullet_{\rightharpoonup}S_{\rightharpoonup}(b_{2})\bullet_{\rightharpoonup}S_{\rightharpoonup}(a_{2}). 
\end{equation}
\end{remark}
Now, we show that a Yetter--Drinfeld post-Hopf algebra is automatically a Hopf monoid in the category of (left-left) Yetter--Drinfeld modules over $H_{\rightharpoonup}$.

\begin{theorem}\label{HdotHopfYD}
Given a Yetter--Drinfeld post-Hopf algebra $(H,\rightharpoonup)$ one has that $(H,\rightharpoonup,\mathrm{Ad}_{L})$ is in $^{H_{\rightharpoonup}}_{H_{\rightharpoonup}}\mathcal{YD}$. Moreover, $(H,\cdot,1,\Delta,\epsilon,S)$ is in $\mathrm{Hopf}(^{H_{\rightharpoonup}}_{H_{\rightharpoonup}}\mathcal{YD})$.
\end{theorem}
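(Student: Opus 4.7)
The plan is to verify in turn (i) that $(H,\rightharpoonup,\mathrm{Ad}_L)$ is an object of ${}^{H_\rightharpoonup}_{H_\rightharpoonup}\mathcal{YD}$, (ii) that $m_\cdot,u,\Delta,\epsilon$ are morphisms in this category, and (iii) that the bialgebra axiom with respect to the braiding holds and that $S$ is an antipode and itself a morphism.

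For (i), the action axioms for $\rightharpoonup$ follow from \eqref{harpoonunit} and \eqref{associativity}, as already noted in Remark \ref{Hdotmoncomon}; the left coaction axioms for $\mathrm{Ad}_L(x)=x_1\bullet_\rightharpoonup S_\rightharpoonup(x_3)\otimes x_2$ are the standard ones for the adjoint coaction of a Hopf algebra, so they follow once $H_\rightharpoonup$ is a Hopf algebra by Proposition \ref{HharpHopf}. The Yetter--Drinfeld compatibility, once one uses that $\rightharpoonup$ is a morphism of coalgebras, becomes
\[
(h_1\rightharpoonup a_1)\bullet_\rightharpoonup S_\rightharpoonup(h_3\rightharpoonup a_3)\otimes(h_2\rightharpoonup a_2)=h_1\bullet_\rightharpoonup a_1\bullet_\rightharpoonup S_\rightharpoonup(a_3)\bullet_\rightharpoonup S_\rightharpoonup(h_3)\otimes(h_2\rightharpoonup a_2).
\]
I would prove this by rewriting $S_\rightharpoonup(h_3\rightharpoonup a_3)$ through \eqref{Sharponharp}, which introduces a $\leftharpoonup$--term, and then applying \eqref{MP5} in its equivalent form \eqref{equivalentcondition} to collapse the mixed $\rightharpoonup/\leftharpoonup$ piece into a single $\bullet_\rightharpoonup$.

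For (ii), the $H_\rightharpoonup$-linearity of $m_\cdot,u,\Delta,\epsilon$ is, respectively, \eqref{dotlinear}, \eqref{ulinear} and the assumption that $\rightharpoonup$ is a morphism of coalgebras. On the comodule side, the compatibility of $\mathrm{Ad}_L$ with $\Delta$ and $\epsilon$ is a standard property of the adjoint coaction, while the identity needed for $m_\cdot$, namely
\[
\mathrm{Ad}_L(a\cdot b)=(a_{-1}\bullet_\rightharpoonup b_{-1})\otimes(a_0\cdot b_0),
\]
I plan to obtain by expanding $\Delta(a\cdot b)$ through \eqref{comp.Deltadot} and realigning the resulting Sweedler indices via \eqref{MP5}.

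For (iii), the braiding of ${}^{H_\rightharpoonup}_{H_\rightharpoonup}\mathcal{YD}$ is $c(a\otimes b)=(a_1\bullet_\rightharpoonup S_\rightharpoonup(a_3))\rightharpoonup b\otimes a_2$, so the bialgebra axiom in the braided category reads
\[
\Delta(a\cdot b)=\bigl(a_1\cdot((a_2\bullet_\rightharpoonup S_\rightharpoonup(a_4))\rightharpoonup b_1)\bigr)\otimes(a_3\cdot b_2),
\]
which is precisely \eqref{comp.Deltadot} once one applies \eqref{associativity} and the identification $\beta_\rightharpoonup=\alpha_\rightharpoonup S_\rightharpoonup$ from \eqref{betarightharp}; the axiom is thus built into the definition. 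The antipode conditions for $S$ are the defining convolution-inverse identities of a YD post-Hopf algebra, and that $S$ is a morphism in the YD category I would deduce from \eqref{Slinear} together with the antipode properties of $H_\rightharpoonup$. The hard part, I expect, will be the Yetter--Drinfeld compatibility and the comodule-morphism condition for $m_\cdot$: both require a careful expansion of $S_\rightharpoonup$ across a product and a subsequent use of \eqref{MP5} to realign Sweedler indices.
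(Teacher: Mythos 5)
Your overall route coincides with the paper's: the Yetter--Drinfeld compatibility of $\mathrm{Ad}_L$ with $\rightharpoonup$ via \eqref{Sharponharp} and \eqref{equivalentcondition}, the observation that the braided bialgebra axiom is exactly \eqref{comp.Deltadot} through \eqref{betarightharp}, and the module-side structure from Remark \ref{Hdotmoncomon} are all as in the paper. However, your treatment of the antipode has a genuine gap. For $(H,\cdot,1,\Delta,\epsilon,S)$ to be in $\mathrm{Hopf}(^{H_{\rightharpoonup}}_{H_{\rightharpoonup}}\mathcal{YD})$ you need $S$ to be a morphism in the Yetter--Drinfeld category, i.e.\ both $H_{\rightharpoonup}$-linear and $\mathrm{Ad}_L$-colinear. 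Equation \eqref{Slinear} gives only the linearity; the colinearity $\mathrm{Ad}_L(S(a))=a_1\bullet_{\rightharpoonup}S_{\rightharpoonup}(a_3)\otimes S(a_2)$ is a separate, nontrivial statement, and ``the antipode properties of $H_{\rightharpoonup}$'' do not yield it in any evident way, since $S$ is a priori only a linear convolution inverse of $\mathrm{Id}_H$, not a categorical morphism. The paper avoids this entirely by citing \cite[Proposition 3.10]{FS}, which upgrades a bimonoid in $^{H_{\rightharpoonup}}_{H_{\rightharpoonup}}\mathcal{YD}$ equipped with a linear map satisfying $a_1\cdot S(a_2)=\epsilon(a)1=S(a_1)\cdot a_2$ to a Hopf monoid there; you must either invoke that result or supply the colinearity computation, which your sketch does not.

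A secondary concern is the colinearity of $m_{\cdot}$, which you correctly identify as the hard step but plan to attack by expanding $\Delta(a\cdot b)$ through \eqref{comp.Deltadot} and ``realigning via \eqref{MP5}''. As stated this is unlikely to close: after iterating \eqref{comp.Deltadot} you must apply $S_{\rightharpoonup}$ to $\cdot$-products of the form $a_i\cdot\big(\alpha_{\rightharpoonup}a_j(\beta_{\rightharpoonup}a_k(b_l))\big)$, and there is no anti-multiplicativity of $S_{\rightharpoonup}$ with respect to $\cdot$ available. The paper instead first rewrites $a\cdot b=a_1\bullet_{\rightharpoonup}\big(S_{\rightharpoonup}(a_2)\rightharpoonup b\big)$ via \eqref{dotfrombullet}, so that $\Delta$ and $S_{\rightharpoonup}$ can be handled using the Hopf algebra structure of $H_{\rightharpoonup}$, and then applies \eqref{equivalentcondition} with arguments $S_{\rightharpoonup}(a_i)\otimes b_j$ before converting back with \eqref{dotfrombullet}; you should reroute your computation along these lines.
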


\begin{proof}
Given the left adjoint coaction $\mathrm{Ad}_{L}:H\mapsto H_{\rightharpoonup}\otimes H$, $a\mapsto a_{1}\bullet_{\rightharpoonup}S_{\rightharpoonup}(a_{3})\otimes a_{2}$ and the action $\rightharpoonup:H_{\rightharpoonup}\otimes H\to H$, we verify the compatibility condition:
\[
\begin{split}
    \mathrm{Ad}_{L}(a\rightharpoonup b&)=(a_{1}\rightharpoonup b_{1})\bullet_{\rightharpoonup}S_{\rightharpoonup}(a_{3}\rightharpoonup b_{3})\otimes(a_{2}\rightharpoonup b_{2})\\&\overset{\eqref{Sharponharp}}{=}(a_{1}\rightharpoonup b_{1})\bullet_{\rightharpoonup}(a_{3}\leftharpoonup b_{3})\bullet_{\rightharpoonup}S_{\rightharpoonup}(b_{4})\bullet_{\rightharpoonup}S_{\rightharpoonup}(a_{4})\otimes(a_{2}\rightharpoonup b_{2})\\&\overset{\eqref{equivalentcondition}}{=}a_{1}\bullet_{\rightharpoonup}b_{1}\bullet_{\rightharpoonup}S_{\rightharpoonup}(b_{3})\bullet_{\rightharpoonup}S_{\rightharpoonup}(a_{3})\otimes(a_{2}\rightharpoonup b_{2}).
\end{split}
\]
Hence we obtain the braiding operator $\sigma^{\mathcal{YD}}_{H,H}:H\otimes H\to H\otimes H$ given by
\[
\sigma^{\mathcal{YD}}_{H,H}:a\otimes b\mapsto(a_{1}\bullet_{\rightharpoonup}S_{\rightharpoonup}(a_{3})\rightharpoonup b)\otimes a_{2}.
\]
Let us observe that
\[
(a_{1}\bullet_{\rightharpoonup}S_{\rightharpoonup}(a_{3})\rightharpoonup b)\otimes a_{2}=(a_{1}\rightharpoonup(S_{\rightharpoonup}(a_{3})\rightharpoonup b))\otimes a_{2}=\alpha_{\rightharpoonup}a_{1}(\alpha_{\rightharpoonup}S_{\rightharpoonup}(a_{3})(b))\otimes a_{2}\overset{\eqref{betarightharp}}{=}\alpha_{\rightharpoonup}a_{1}(\beta_{\rightharpoonup}a_{3}(b))\otimes a_{2}.
\]
We compute
\[
\begin{split}
    (m_{\cdot}\otimes m_{\cdot})(\mathrm{Id}_{H}\otimes\sigma^{\mathcal{YD}}_{H,H}\otimes\mathrm{Id}_{H})(\Delta\otimes\Delta)(a\otimes b)&
=\big(a_{1}\cdot(\alpha_{\rightharpoonup}a_{2}(\beta_{\rightharpoonup}a_{4}(b_{1})))\big)\otimes\big(a_{3}\cdot b_{2}\big)\overset{\eqref{comp.Deltadot}}{=}\Delta m_{\cdot}(a\otimes b),
\end{split}
\]
thus, since $\epsilon(a\cdot b)=\epsilon(a)\epsilon(b)$, $\epsilon(1_{H})=1_{\Bbbk}$ and $\Delta(1_{H})=1_{H}\otimes1_{H}$ hold by definition of Yetter--Drinfeld post-Hopf algebra, the compatibility condition for a bialgebra in $^{H_{\rightharpoonup}}_{H_{\rightharpoonup}}\mathcal{YD}$ is satisfied. By Remark \ref{Hdotmoncomon} we already know that $(H,\cdot,1)$ is in $\mathrm{Mon}(_{H_{\rightharpoonup}}\mm)$ and $(H,\Delta,\epsilon)$ is in $\mathrm{Comon}(_{H_{\rightharpoonup}}\mm)$.
Moreover, clearly, $u,\Delta$ and $\epsilon$ are morphisms in $^{H_{\rightharpoonup}}\mathfrak{M}$ with respect to the adjoint coaction $\mathrm{Ad}_{L}$. Hence, we show that also $m_{\cdot}$ is left colinear in order to obtain that $(H,\cdot,1,\Delta,\epsilon)$ is in $\mathrm{Bimon}(^{H_{\rightharpoonup}}_{H_{\rightharpoonup}}\mathcal{YD})$. Let us first observe that 
\[
\begin{split}
(\mathrm{Id}\otimes\Delta)\Delta(a\cdot b)&=\big(a_{1}\cdot(\alpha_{\rightharpoonup}a_{2}(\beta_{\rightharpoonup}a_{4}(b_{1})))\big)\otimes\Delta(a_{3}\cdot b_{2})\\&=\big(a_{1}\cdot(\alpha_{\rightharpoonup}a_{2}(\beta_{\rightharpoonup}a_{7}(b_{1})))\big)\otimes(a_{3}\cdot(\alpha_{\rightharpoonup}a_{4}(\beta_{\rightharpoonup}a_{6}(b_{2}))))\otimes(a_{5}\cdot b_{3}).
\end{split}
\]
From \eqref{equivalentcondition} we obtain that
\[
(S_{\rightharpoonup}(a_{3})\rightharpoonup b_{1})\bullet_{\rightharpoonup}(S_{\rightharpoonup}(a_{1})\leftharpoonup b_{3})\otimes(S_{\rightharpoonup}(a_{2})\rightharpoonup b_{2})=(S_{\rightharpoonup}(a_{2})\bullet_{\rightharpoonup}b_{1})\otimes(S_{\rightharpoonup}(a_{1})\rightharpoonup b_{2}),
\]
which is equivalent to
\[
\begin{split}
\big(a_{1}\bullet_{\rightharpoonup}&(S_{\rightharpoonup}(a_{5})\rightharpoonup b_{1})\bullet_{\rightharpoonup}(S_{\rightharpoonup}(a_{3})\leftharpoonup b_{3})\bullet_{\rightharpoonup}S_{\rightharpoonup}(b_{4})\big)\otimes\big(a_{2}\bullet_{\rightharpoonup}(S_{\rightharpoonup}(a_{4})\rightharpoonup b_{2})\big)\\&\ =\big(a_{1}\bullet_{\rightharpoonup}S_{\rightharpoonup}(a_{4})\bullet_{\rightharpoonup}b_{1}\bullet_{\rightharpoonup}S_{\rightharpoonup}(b_{3})\big)\otimes\big(a_{2}\bullet_{\rightharpoonup}(S_{\rightharpoonup}(a_{3})\rightharpoonup b_{2})\big)\\&\overset{\eqref{dotfrombullet}}{=}\big(a_{1}\bullet_{\rightharpoonup}S_{\rightharpoonup}(a_{3})\bullet_{\rightharpoonup}b_{1}\bullet_{\rightharpoonup}S_{\rightharpoonup}(b_{3})\big)\otimes\big(a_{2}\cdot b_{2}\big).
\end{split}
\]
Moreover, we have
\[
\begin{split}
&\big(a_{1}\bullet_{\rightharpoonup}(S_{\rightharpoonup}(a_{5})\rightharpoonup b_{1})\bullet_{\rightharpoonup}(S_{\rightharpoonup}(a_{3})\leftharpoonup b_{3})\bullet_{\rightharpoonup}S_{\rightharpoonup}(b_{4})\big)\otimes\big(a_{2}\bullet_{\rightharpoonup}(S_{\rightharpoonup}(a_{4})\rightharpoonup b_{2})\big)\\&\overset{\eqref{defleftharp}}{=}\big(a_{1}\bullet_{\rightharpoonup}(S_{\rightharpoonup}(a_{6})\rightharpoonup b_{1})\bullet_{\rightharpoonup}S_{\rightharpoonup}(S_{\rightharpoonup}(a_{4})\rightharpoonup b_{3})\bullet_{\rightharpoonup}S_{\rightharpoonup}(a_{3})\bullet_{\rightharpoonup}b_{4}\bullet S_{\rightharpoonup}(b_{5})\big)\otimes\big(a_{2}\bullet_{\rightharpoonup}(S_{\rightharpoonup}(a_{5})\rightharpoonup b_{2})\big)\\&\ =\big(a_{1}\bullet_{\rightharpoonup}(S_{\rightharpoonup}(a_{6})\rightharpoonup b_{1})\bullet_{\rightharpoonup}S_{\rightharpoonup}(S_{\rightharpoonup}(a_{4})\rightharpoonup b_{3})\bullet_{\rightharpoonup}S_{\rightharpoonup}(a_{3})\big)\otimes\big(a_{2}\bullet_{\rightharpoonup}(S_{\rightharpoonup}(a_{5})\rightharpoonup b_{2})\big)\\&\ =\big(a_{1}\bullet_{\rightharpoonup}(S_{\rightharpoonup}(a_{6})\rightharpoonup b_{1})\bullet_{\rightharpoonup}S_{\rightharpoonup}(a_{3}\bullet_{\rightharpoonup}(S_{\rightharpoonup}(a_{4})\rightharpoonup b_{3}))\big)\otimes\big(a_{2}\bullet_{\rightharpoonup}(S_{\rightharpoonup}(a_{5})\rightharpoonup b_{2})\big)\\&\ =\big(a_{1}\bullet_{\rightharpoonup}(S_{\rightharpoonup}(a_{2})\rightharpoonup b)\big)_{1}\bullet_{\rightharpoonup}S_{\rightharpoonup}\Big(\big(a_{1}\bullet_{\rightharpoonup}(S_{\rightharpoonup}(a_{2})\rightharpoonup b)\big)_{3}\Big)\otimes\big(a_{1}\bullet_{\rightharpoonup}(S_{\rightharpoonup}(a_{2})\rightharpoonup b)\big)_{2}\\&\overset{\eqref{dotfrombullet}}{=}(a\cdot b)_{1}\bullet_{\rightharpoonup}S_{\rightharpoonup}((a\cdot b)_{3})\otimes(a\cdot b)_{2},
\end{split}
\]
hence we obtain
\[
(a\cdot b)_{1}\bullet_{\rightharpoonup}S_{\rightharpoonup}((a\cdot b)_{3})\otimes(a\cdot b)_{2}=\big(a_{1}\bullet_{\rightharpoonup}S_{\rightharpoonup}(a_{3})\bullet_{\rightharpoonup}b_{1}\bullet_{\rightharpoonup}S_{\rightharpoonup}(b_{3})\big)\otimes\big(a_{2}\cdot b_{2}\big),
\]
i.e. $m_{\cdot}$ is left colinear. Thus, $(H,\cdot,1,\Delta,\epsilon)$ is in $\mathrm{Bimon}(^{H_{\rightharpoonup}}_{H_{\rightharpoonup}}\mathcal{YD})$. Finally, we already know that $S$ satisfies $a_{1}\cdot S(a_{2})=\epsilon(a)1_{H}=S(a_{1})\cdot a_{2}$ for all $a\in H$, hence $(H,\cdot,1,\Delta,\epsilon,S)$ is in $\mathrm{Hopf}(^{H_{\rightharpoonup}}_{H_{\rightharpoonup}}\mathcal{YD})$ by \cite[Proposition 3.10]{FS}.
\end{proof}

\begin{remark}
    If $H$ is cocommutative the braiding $\sigma^{\mathcal{YD}}_{H,H}:a\otimes b\mapsto\alpha_{\rightharpoonup}a_{1}(\beta_{\rightharpoonup}a_{3}(b))\otimes a_{2}$ becomes the canonical flip $\tau_{H,H}$ and $(H,\cdot,1,\Delta,\epsilon,S)$ becomes a standard Hopf algebra.
\end{remark}

We can prove the following result:

\begin{proposition}
    Let $(H,\cdot,1,\Delta,\epsilon,S,\rightharpoonup)$ be a Yetter--Drinfeld post-Hopf algebra. Then, the set of group-like elements $G(H)$ is a post-group.
\end{proposition}

\begin{proof}
We know that $\Delta(1_{H})=1_{H}\ot1_{H}$ and $\epsilon(1_{H})=1_{\Bbbk}$, so $1_{H}\in G(H)$.
In order to obtain that $(G(H),\cdot)$ is a group, we need to prove that $G(H)$ is closed under $\cdot$ and $S$. Indeed, we already know that $x\cdot S(x)=S(x)\cdot x=1_{H}$ for all $x\in G(H)$. Since $\alpha_{\rightharpoonup}:H\to\mathrm{End}(H)$ is convolution invertible with convolution inverse $\beta_{\rightharpoonup}$, we obtain $(\alpha_{\rightharpoonup}x)\circ(\beta_{\rightharpoonup}x)=(\beta_{\rightharpoonup}x)\circ(\alpha_{\rightharpoonup}x)=\mathrm{Id}$ for all $x\in G(H)$ and then, for all $x,y\in G(H)$, we have 
\[
\Delta(x\cdot y)\overset{\eqref{comp.Deltadot}}{=}\Big(x\cdot\big(\alpha_{\rightharpoonup}x(\beta_{\rightharpoonup}x(y))\big)\Big)\ot(x\cdot y)=(x\cdot y)\ot(x\cdot y), \qquad \epsilon(x\cdot y)=\epsilon(x)\epsilon(y)=1_{\Bbbk},
\]
hence $x\cdot y\in G(H)$. Moreover, by Theorem \ref{HdotHopfYD}, we know that $(H,\cdot,1,\Delta,\epsilon,S)$ is in $\mathrm{Hopf}(^{H_{\rightharpoonup}}_{H_{\rightharpoonup}}\mathcal{YD})$, hence $S$ satisfies $(S\ot S)\sigma^{\mathcal{YD}}_{H,H}\Delta=\Delta S$ and $\epsilon S=\epsilon$. Hence, given $x\in G(H)$, we obtain
\[
(S\ot S)\sigma^{\mathcal{YD}}_{H,H}\Delta(x)=(S\ot S)\sigma^{\mathcal{YD}}_{H,H}(x\ot x)=(S\ot S)\big(\alpha_{\rightharpoonup}x(\beta_{\rightharpoonup}x(x))\ot x\big)=S(x)\ot S(x),
\]
so $\Delta(S(x))=S(x)\ot S(x)$ and $\epsilon(S(x))=\epsilon(x)=1_{\Bbbk}$ and then $S(x)\in G(H)$. Thus, $(G(H),\cdot)$ is a group. Moreover, since $\rightharpoonup$ is a coalgebra morphism, for all $x,y\in G(H)$ we have
\[
\Delta(x\rightharpoonup y)=(x_{1}\rightharpoonup y_{1})\ot(x_{2}\rightharpoonup y_{2})=(x\rightharpoonup y)\ot(x\rightharpoonup y), \qquad \epsilon(x\rightharpoonup y)=\epsilon(x)\epsilon(y)=1_{\Bbbk},
\]
hence we obtain a map $\rightharpoonup:G(H)\times G(H)\to G(H)$. Since \eqref{dotlinear} and \eqref{deformedassociativityright} are satisfied for all $x,y,z\in H$, we have 
\[
x\rightharpoonup(y\cdot z)=(x\rightharpoonup y)\cdot(x\rightharpoonup z),\qquad x\rightharpoonup(y\rightharpoonup z)=(x\cdot(x\rightharpoonup y))\rightharpoonup z 
\]
for all $x,y,z\in G(H)$. In particular, $\alpha_{\rightharpoonup}x:G(H)\to G(H)$ is an automorphism of the group $G(H)$ for all $x\in G(H)$ and then $(G(H),\cdot,\rightharpoonup)$ is a post-group.
\end{proof}

Since a Yetter--Drinfeld post-Hopf algebra $(H,\rightharpoonup)$ is an object in $\mathrm{Mon}(^{H_{\rightharpoonup}}_{H_{\rightharpoonup}}\mathcal{YD})$ we call it a \textbf{Yetter--Drinfeld pre-Hopf algebra} is $H$ is braided commutative, i.e. $m_{\cdot}\circ\sigma^{\mathcal{YD}}_{H,H}=m_{\cdot}$, which on elements $a,b\in H$ reads: 
\[
\alpha_{\rightharpoonup}a_{1}(\beta_{\rightharpoonup}a_{3}(b))\cdot a_{2}=a\cdot b.
\]
If $H$ is cocommutative then $\sigma^{\mathcal{YD}}_{H,H}=\tau_{H,H}$ and the braided commutativity becomes the classical commutativity.

\begin{remark}
    Since a Yetter--Drinfeld post-Hopf algebra $(H,\rightharpoonup)$ is an object in $\mathrm{Hopf}(^{H_{\rightharpoonup}}_{H_{\rightharpoonup}}\mathcal{YD})$, one can consider the braided commutator $[a,b]_{\mathcal{YD}}:=a\cdot b-m_{\cdot}\sigma^{\mathcal{YD}}_{H,H}(a\otimes b)$ studying primitive elements. But if  $a\in P(H)$, recalling that $\alpha_{\rightharpoonup}1=\beta_{\rightharpoonup}1=\mathrm{Id}_{H}$, we obtain 
\[
\begin{split}
\sigma^{\mathcal{YD}}_{H,H}(a\otimes b&)=\alpha_{\rightharpoonup}1(\beta_{\rightharpoonup}a(b))\otimes1+\alpha_{\rightharpoonup}1(\beta_{\rightharpoonup}1(b))\otimes a+\alpha_{\rightharpoonup}a(\beta_{\rightharpoonup}1(b))\otimes 1\\&\overset{\eqref{betarightharp}}{=}\alpha_{\rightharpoonup}S_{\rightharpoonup}a(b)\otimes 1+b\otimes a+\alpha_{\rightharpoonup}a(b)\otimes1\\&\ =-\alpha_{\rightharpoonup}a(b)\otimes1+b\otimes a+\alpha_{\rightharpoonup}a(b)\otimes 1\\&=b\otimes a,
\end{split}
\]
hence the braided commutator coincides with the classical one.
\end{remark}

Finally, we obtain that any Yetter--Drinfeld post-Hopf algebra provides a Yetter--Drinfeld brace.

\begin{corollary}\label{cor:fromPHtoBr}
    Let $(H,\rightharpoonup)$ be a Yetter--Drinfeld post-Hopf algebra. Then, $(H,\cdot,\bullet_{\rightharpoonup},1,\Delta,\epsilon,S,S_{\rightharpoonup})$ is a Yetter--Drinfeld brace. This yields a functor $F:\mathcal{YD}\mathrm{PH}(\mathrm{Vec}_{\Bbbk})\to\mathcal{YD}\mathrm{Br}(\mathrm{Vec}_{\Bbbk})$.
\end{corollary}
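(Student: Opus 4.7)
The plan is to verify, one by one, the three axioms in the definition of a Yetter--Drinfeld brace for the datum $(H,\cdot,\bullet_{\rightharpoonup},1,\Delta,\epsilon,S,S_{\rightharpoonup})$, drawing on the structural results already proved in this section. By Proposition \ref{HharpHopf}, $H_{\rightharpoonup}=(H,\bullet_{\rightharpoonup},1,\Delta,\epsilon,S_{\rightharpoonup})$ is a Hopf algebra, so axiom 1) asks that $(H,\cdot,1,\Delta,\epsilon,S)$ lives in $\mathrm{Hopf}(^{H_{\rightharpoonup}}_{H_{\rightharpoonup}}\mathcal{YD})$ with the action $a\rightharpoonup' b:=S(a_{1})\cdot(a_{2}\bullet_{\rightharpoonup}b)$ and the adjoint coaction. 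By \eqref{harpfromS}, $\rightharpoonup'$ coincides with $\rightharpoonup$, so the content of axiom 1) is exactly Theorem \ref{HdotHopfYD}, which gives nothing more to check.

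For axiom 2), note that the map $\leftharpoonup$ prescribed in the YD-brace definition is $T(a_{1}\rightharpoonup b_{1})\bullet a_{2}\bullet b_{2}=S_{\rightharpoonup}(a_{1}\rightharpoonup b_{1})\bullet_{\rightharpoonup}a_{2}\bullet_{\rightharpoonup}b_{2}$, which agrees verbatim with \eqref{defleftharp}. The relation \eqref{MP5} is then imposed directly in the definition of Yetter--Drinfeld post-Hopf algebra, so axiom 2) is free.

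The only nontrivial verification is the Hopf brace compatibility \eqref{HBC}. I would compute both sides explicitly using only \eqref{defbullet}, \eqref{dotlinear}, and the antipode axiom for $S$ on $(H,\cdot,1)$. On the left,
\[
a\bullet_{\rightharpoonup}(b\cdot c)\overset{\eqref{defbullet}}{=}a_{1}\cdot\bigl(a_{2}\rightharpoonup(b\cdot c)\bigr)\overset{\eqref{dotlinear}}{=}a_{1}\cdot(a_{2}\rightharpoonup b)\cdot(a_{3}\rightharpoonup c),
\]
while on the right, unfolding $\bullet_{\rightharpoonup}$ twice and inserting $S$,
\[
(a_{1}\bullet_{\rightharpoonup}b)\cdot S(a_{2})\cdot(a_{3}\bullet_{\rightharpoonup}c)=a_{1}\cdot(a_{2}\rightharpoonup b)\cdot S(a_{3})\cdot a_{4}\cdot(a_{5}\rightharpoonup c)=a_{1}\cdot(a_{2}\rightharpoonup b)\cdot(a_{3}\rightharpoonup c),
\]
so the two sides coincide. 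This is essentially routine bookkeeping: no genuine obstacle arises, because the cocycle term $S(a_{2})$ on the right absorbs exactly the $a$-factor produced by $\bullet_{\rightharpoonup}$ before its interaction with $c$.

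For functoriality, given a morphism $g:(H,\rightharpoonup)\to(H',\rightharpoonup')$ of Yetter--Drinfeld post-Hopf algebras, I would argue that $g$ is automatically a morphism of the underlying YD-brace data. Since $g$ is a morphism of algebras and coalgebras preserving $\rightharpoonup$ by \eqref{morphismYDpost}, applying $g$ to \eqref{defbullet} immediately gives $g(a\bullet_{\rightharpoonup}b)=g(a)\bullet_{\rightharpoonup'}g(b)$, so $g$ is a Hopf algebra map between the subadjacent Hopf algebras; compatibility with the antipodes $S,S_{\rightharpoonup}$ is then automatic. Together with the preservation of $\cdot$ built into $g$, this shows $g$ is a morphism of Yetter--Drinfeld braces. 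Since $F$ acts as the identity on underlying linear data, functoriality (preservation of identities and composition) is immediate, yielding the desired functor $F:\mathcal{YD}\mathrm{PH}(\mathrm{Vec}_{\Bbbk})\to\mathcal{YD}\mathrm{Br}(\mathrm{Vec}_{\Bbbk})$.
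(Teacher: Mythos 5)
Your proposal is correct and follows essentially the same route as the paper's proof: axiom 1) via Proposition \ref{HharpHopf} and Theorem \ref{HdotHopfYD} together with \eqref{harpfromS}, axiom 2) via \eqref{defleftharp} and the built-in \eqref{MP5}, the Hopf brace compatibility \eqref{HBC} by the identical unfolding of $\bullet_{\rightharpoonup}$ with the antipode cancellation $S(a_{3})\cdot a_{4}$, and functoriality by checking that a post-Hopf morphism is multiplicative for $\bullet_{\rightharpoonup}$. No discrepancies to report.
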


\begin{proof}
    We know that $(H,\bullet_{\rightharpoonup},1,\Delta,\epsilon,S_{\rightharpoonup})$ is a Hopf algebra by Proposition \ref{HharpHopf} and $(H,\cdot,1,\Delta,\epsilon,S)$ is in $\mathrm{Hopf}(^{H_{\rightharpoonup}}_{H_{\rightharpoonup}}\mathcal{YD})$ by Theorem \ref{HdotHopfYD}, where the coaction is given by $\mathrm{Ad}_{L}$. Moreover, by \eqref{harpfromS} we know that the action $\rightharpoonup$ is given by $a\rightharpoonup b=S(a_{1})\cdot(a_{2}\bullet_{\rightharpoonup}b)$ and, 
if we define $\leftharpoonup$ as in \eqref{defleftharp}, the pair $(\rightharpoonup,\leftharpoonup)$ satisfies \eqref{MP5}. In order to conclude that $(H,\cdot,\bullet_{\rightharpoonup},1,\Delta,\epsilon,S,S_{\rightharpoonup})$ is a Yetter--Drinfeld brace it remains to prove that $\cdot$ and $\bullet_{\rightharpoonup}$ satisfy the compatibility of Hopf braces:
\[
\begin{split}
    x\bullet_{\rightharpoonup}(y\cdot z)&=x_{1}\cdot(x_{2}\rightharpoonup(y\cdot z))\overset{\eqref{dotlinear}}{=}x_{1}\cdot((x_{2}\rightharpoonup y)\cdot(x_{3}\rightharpoonup z))=x_{1}\cdot(x_{2}\rightharpoonup y)\cdot S(x_{3})\cdot x_{4}\cdot(x_{5}\rightharpoonup z)\\&=(x_{1}\bullet_{\rightharpoonup}y)\cdot S(x_{2})\cdot(x_{3}\bullet_{\rightharpoonup}z).
\end{split}
\]
Let $f:(H,\cdot,1,\Delta,\epsilon,S
,\rightharpoonup)\to(H',\cdot',1',\Delta',\epsilon',S'
,\rightharpoonup')$ be a morphism of Yetter--Drinfeld post-Hopf algebras. We already know that $f
$ is a morphism of algebras and coalgebras, so we just have to show that it is multiplicative with respect to $\bullet_{\rightharpoonup}$ in order to obtain that it is a morphism of Yetter--Drinfeld braces $f:(H,\cdot,\bullet_{\rightharpoonup},1,\Delta,\epsilon,S,S_{\rightharpoonup})\to(H',\cdot',\bullet_{\rightharpoonup'}',1',\Delta',\epsilon',S',S'_{\rightharpoonup'})$. We compute
\[
\begin{split}
f(a\bullet_{\rightharpoonup}b)=f(a_{1}\cdot(a_{2}\rightharpoonup b))&=f(a_{1})\cdot f(a_{2}\rightharpoonup b)\overset{\eqref{morphismYDpost}}{=}f(a_{1})\cdot(f(a_{2})\rightharpoonup'f(b))=f(a)_{1}\cdot(f(a)_{2}\rightharpoonup'f(b))\\&=f(a)\bullet_{\rightharpoonup'}f(b).
\end{split}
\]
Clearly the assignment $F$ respects identities and compositions.
\end{proof}

We can also prove that any Yetter--Drinfeld brace provides a Yetter--Drinfeld post-Hopf algebra as follows.

\begin{proposition}\label{prop:fromYDbracetoYDpostHopf}
Let $(H,\cdot,\bullet,1,\Delta,\epsilon,S,T)$ be a Yetter--Drinfeld brace. Then $(H,\cdot,1,\Delta,\epsilon,S,\rightharpoonup)$ is a Yetter--Drinfeld post-Hopf algebra with $\rightharpoonup$ defined by
\[
a\rightharpoonup b:=S(a_{1})\cdot(a_{2}\bullet b)
\]
and $\beta_{\rightharpoonup}a:b\mapsto(T(a)\rightharpoonup b)$, for all $a,b\in H$. This yields a functor $G:\mathcal{YD}\mathrm{Br}(\mathrm{Vec}_{\Bbbk})\to\mathcal{YD}\mathrm{PH}(\mathrm{Vec}_{\Bbbk})$.
\end{proposition}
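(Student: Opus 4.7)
My strategy is to leverage the structure already present in a Yetter--Drinfeld brace: by definition, $(H,\cdot,1,\Delta,\epsilon,S)$ is an object in $\mathrm{Hopf}(^{H^{\bullet}}_{H^{\bullet}}\mathcal{YD})$ with the action given precisely by $a\rightharpoonup b = S(a_{1})\cdot(a_{2}\bullet b)$, so the algebra/coalgebra axioms, the antipode property of $S$, the fact that $\rightharpoonup$ is a coalgebra morphism, and \eqref{dotlinear} are immediate from $H$ being a YD-module coalgebra and module algebra over $H^{\bullet}$. The observation that streamlines everything else is the identification $\bullet_{\rightharpoonup}=\bullet$, established by the short calculation
\[
x\bullet_{\rightharpoonup}y = x_{1}\cdot(x_{2}\rightharpoonup y) = x_{1}\cdot S(x_{2})\cdot(x_{3}\bullet y) = \epsilon(x_{1})(x_{2}\bullet y) = x\bullet y.
\]

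With this in hand, equation \eqref{deformedassociativityright} becomes the associativity $x\rightharpoonup(y\rightharpoonup z) = (x\bullet y)\rightharpoonup z$ of the $H^{\bullet}$-action on $H$. For convolution invertibility of $\alpha_{\rightharpoonup}$ with proposed inverse $\beta_{\rightharpoonup}a\colon b\mapsto T(a)\rightharpoonup b$, I compute
\[
\alpha_{\rightharpoonup}x_{1}\bigl(\beta_{\rightharpoonup}x_{2}(b)\bigr) = x_{1}\rightharpoonup(T(x_{2})\rightharpoonup b) = (x_{1}\bullet T(x_{2}))\rightharpoonup b = \epsilon(x)b,
\]
and symmetrically using $T(x_{1})\bullet x_{2}=\epsilon(x)1$. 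The compatibility \eqref{comp.Deltadot} is a direct translation of the bialgebra-in-$\mathcal{YD}$ compatibility $\Delta\circ m_{\cdot} = (m_{\cdot}\otimes m_{\cdot})\circ(\mathrm{Id}\otimes\sigma^{\mathcal{YD}}_{H,H}\otimes\mathrm{Id})\circ(\Delta\otimes\Delta)$, once one identifies the YD braiding induced by the adjoint coaction as $\sigma^{\mathcal{YD}}_{H,H}(a\otimes b) = ((a_{1}\bullet T(a_{3}))\rightharpoonup b)\otimes a_{2} = \alpha_{\rightharpoonup}a_{1}(\beta_{\rightharpoonup}a_{3}(b))\otimes a_{2}$.

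The main technical point is the anti-comultiplicativity of $S_{\rightharpoonup}$, which I plan to deduce from the stronger statement $S_{\rightharpoonup}=T$. A direct calculation using the definition of $S_{\rightharpoonup}$, property \eqref{alphainvconv}, and $\bullet_{\rightharpoonup}=\bullet$ gives
\[
x_{1}\bullet S_{\rightharpoonup}(x_{2}) = x_{1}\cdot\alpha_{\rightharpoonup}x_{2}\bigl(\beta_{\rightharpoonup}x_{3}(S(x_{4}))\bigr) = x_{1}\cdot S(x_{2}) = \epsilon(x)1,
\]
so $S_{\rightharpoonup}$ is a right convolution inverse of $\mathrm{Id}_{H}$ in $\mathrm{Hom}(H,H^{\bullet})$. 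Since $T$ is a two-sided convolution inverse of $\mathrm{Id}_{H}$, the standard associativity manipulation $T = T*u\epsilon = T*(\mathrm{Id}*S_{\rightharpoonup}) = (T*\mathrm{Id})*S_{\rightharpoonup} = u\epsilon*S_{\rightharpoonup} = S_{\rightharpoonup}$ forces $S_{\rightharpoonup}=T$, and anti-comultiplicativity then follows from $T$ being the antipode of the Hopf algebra $H^{\bullet}$. Condition \eqref{MP5} becomes automatic: with $\bullet_{\rightharpoonup}=\bullet$ and $S_{\rightharpoonup}=T$, the $\leftharpoonup$ of \eqref{defleftharp} coincides with the one prescribed in the YD-brace, so \eqref{MP5} is part of the YD-brace data by definition.

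Functoriality of $G$ is routine: a YD-brace morphism $f$ is multiplicative with respect to both $\cdot$ and $\bullet$ and intertwines the antipodes $S$, so
\[
f(a\rightharpoonup b) = f\bigl(S(a_{1})\cdot(a_{2}\bullet b)\bigr) = S'(f(a)_{1})\cdot(f(a)_{2}\bullet' f(b)) = f(a)\rightharpoonup' f(b),
\]
and identities and compositions are preserved trivially. The hardest step I expect is the identification $S_{\rightharpoonup}=T$ via convolution-inverse uniqueness, which is the key that lets the anti-comultiplicativity and \eqref{MP5} follow formally from the YD-brace data rather than requiring ad hoc computations.
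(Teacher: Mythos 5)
Your proposal is correct and follows essentially the same route as the paper: both exploit that the brace axioms place $(H,\cdot,1,\Delta,\epsilon,S)$ in $\mathrm{Hopf}(^{H^{\bullet}}_{H^{\bullet}}\mathcal{YD})$ with $\rightharpoonup$ the given action, identify $\bullet_{\rightharpoonup}=\bullet$, take $\beta_{\rightharpoonup}a=T(a)\rightharpoonup(-)$, read \eqref{comp.Deltadot} off the braided bialgebra compatibility, and reduce \eqref{MP5} to the brace datum via $S_{\rightharpoonup}=T$. The only variation is the micro-step establishing $S_{\rightharpoonup}=T$: you invoke uniqueness of convolution inverses in $\mathrm{Hom}(H,H^{\bullet})$ after checking $\mathrm{Id}*S_{\rightharpoonup}=u\epsilon$, whereas the paper computes directly $S_{\rightharpoonup}(a)=T(a_{1})\rightharpoonup(a_{2}\rightharpoonup T(a_{3}))=(T(a_{1})\bullet a_{2})\rightharpoonup T(a_{3})=T(a)$ using $a_{1}\rightharpoonup T(a_{2})=S(a)$; both arguments are valid.
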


\begin{proof}
Since $(H,\cdot,\bullet,1,\Delta,\epsilon,S,T)$ is a Yetter--Drinfeld brace, by definition $(H,\cdot,1,\Delta,\epsilon,S)$ is in $\mathrm{Hopf}(^{H^{\bullet}}_{H^{\bullet}}\mathcal{YD})$, where $H^{\bullet}:=(H,\bullet,1,\Delta,\epsilon,T)$ is a Hopf algebra, the $H^{\bullet}$-action and the $H^{\bullet}$-coaction on $H$ are given by $a\rightharpoonup b:=S(a_{1})\cdot(a_{2}\bullet b)$ and $\mathrm{Ad}_{L}:a\mapsto a_{1}\bullet T(a_{3})\otimes a_{2}$, respectively. In particular, $(H,\cdot,1)$ is an algebra, $(H,\Delta,\epsilon)$ is a coalgebra, $S$ satisfies $a_{1}\cdot S(a_{2})=\epsilon(a)1=S(a_{1})\cdot a_{2}$ and $\rightharpoonup$ is a morphism of coalgebras which satisfies \eqref{dotlinear}. From $a_{1}\cdot(a_{2}\rightharpoonup b)=a_{1}\cdot S(a_{2})\cdot(a_{3}\bullet b)=a\bullet b$, also \eqref{deformedassociativityright} is automatically true as $\rightharpoonup$ is an action. Define $(\beta_{\rightharpoonup}a)(b):=T(a)\rightharpoonup b$, for all $a,b\in H$. Hence we have 
\[
\beta_{\rightharpoonup}a_{1}(\alpha_{\rightharpoonup}a_{2}(b))=
T(a_{1})\rightharpoonup(a_{2}\rightharpoonup b)=(T(a_{1})\bullet a_{2})\rightharpoonup b=\epsilon(a)1\rightharpoonup b=\epsilon(a)b, 
\]
and, similarly, $\alpha_{\rightharpoonup}a_{1}(\beta_{\rightharpoonup}a_{2}(b))=\epsilon(a)b$.
Hence $\alpha_{\rightharpoonup}$ is convolution invertible in $\mathrm{Hom}(H,\mathrm{End}(H))$ with inverse given by $\beta_{\rightharpoonup}$. Moreover, we already know that $\epsilon$ and $\Delta$ are morphisms of algebras in $^{H^{\bullet}}_{H^{\bullet}}\mathcal{YD}$, i.e. $\epsilon(a\cdot b)=\epsilon(a)\epsilon(b)$, $\epsilon(1_{H})=1_{\Bbbk}$, $\Delta(1_{H})=1_{H}\otimes1_{H}$ and the following compatibility condition holds true:
\begin{equation}\label{compcondiYD}
\Delta(a\cdot b)=a_{1}\cdot(a_{2}\bullet T(a_{4})\rightharpoonup b_{1})\otimes(a_{3}\cdot b_{2}).
\end{equation}
Hence we have
\[
\Delta(a\cdot b)\overset{\eqref{compcondiYD}}{=}a_{1}\cdot(a_{2}\rightharpoonup(T(a_{4})\rightharpoonup b_{1}))\otimes(a_{3}\cdot b_{2})
=\big(a_{1}\cdot(\alpha_{\rightharpoonup}a_{2}(\beta_{\rightharpoonup}a_{4}(b_{1})))\big)\otimes\big(a_{3}\cdot b_{2}\big),
\]
i.e. \eqref{comp.Deltadot} is satisfied.
Finally, $a\bullet_{\rightharpoonup}b:=a_{1}\cdot(a_{2}\rightharpoonup b)=a\bullet b$ and, since $a_{1}\rightharpoonup T(a_{2})=S(a_{1})\cdot(a_{2}\bullet T(a_{3}))=S(a)$, we obtain
\[
S_{\rightharpoonup}(a)\overset{\eqref{defantipode}}{=}\beta_{\rightharpoonup}a_{1}(S(a_{2}))=T(a_{1})\rightharpoonup(a_{2}\rightharpoonup T(a_{3}))=(T(a_{1})\bullet a_{2})\rightharpoonup T(a_{3})=T(a).
\]
Hence $S_{\rightharpoonup}$ is anti-comultiplicative and $a\leftharpoonup b:=S_{\rightharpoonup}(a_{1}\rightharpoonup b_{1})\bullet_{\rightharpoonup}a_{2}\bullet_{\rightharpoonup}b_{2}=T(a_{1}\rightharpoonup b_{1})\bullet a_{2}\bullet b_{2}$. Thus, by definition of Yetter--Drinfeld brace, the pair $(\rightharpoonup,\leftharpoonup)$ satisfies \eqref{MP5} and $(H,\rightharpoonup)$ is a Yetter--Drinfeld post-Hopf algebra. 

Let $f:(H,\cdot,\bullet,1,\Delta,\epsilon,S,T)\to(H',\cdot',\bullet',1',\Delta',\epsilon',S',T')$ be a morphism of Yetter--Drinfeld braces, so $f:(H,\bullet,1,\Delta,\epsilon,T)\to(H',\bullet',1',\Delta',\epsilon',T')$ is a morphism of Hopf algebras and satisfies $f(a\cdot b)=f(a)\cdot'f(b)$ and $fS=S'f$. We prove that $f(a\rightharpoonup b)=f(a)\rightharpoonup'f(b)$, so that $f:(H,\cdot,1,\Delta,\epsilon,S,\rightharpoonup)\to(H',\cdot',1',\Delta',\epsilon',S',\rightharpoonup')$ is a morphism of Yetter--Drinfeld post-Hopf algebras:
\[
\begin{split}
f(a\rightharpoonup b)&=f(S(a_{1})\cdot(a_{2}\bullet b))=f(S(a_{1}))\cdot' f(a_{2}\bullet b)=S'(f(a_{1}))\cdot'(f(a_{2})\bullet' f(b))\\&=S'(f(a)_{1})\cdot'(f(a)_{2}\bullet' f(b))=f(a)\rightharpoonup'f(b).
\end{split}
\]
Clearly the assignment $G$ respects identities and compositions.
\end{proof}

\begin{remark}
    The previous proof also shows that the subadjacent Hopf algebra of the Yetter--Drinfeld post-Hopf algebra $(H,\cdot,1,\Delta,\epsilon,S,\rightharpoonup)$, where the latter is obtained from the Yetter--Drinfeld brace $(H,\cdot,\bullet,1,\Delta,\epsilon,S,T)$, is given by the Hopf algebra $(H,\bullet,1,\Delta,\epsilon,T)$. Then, clearly $FG=\mathrm{Id}$. Moreover, given a Yetter--Drinfeld post-Hopf algebra $(H,\cdot,1,\Delta,\epsilon,S\rightharpoonup)$ and the Yetter--Drinfeld brace $F((H,\rightharpoonup))=(H,\cdot,\bullet_{\rightharpoonup},1,\Delta,\epsilon,S,S_{\rightharpoonup})$, then $GF((H,\rightharpoonup))$ is the Yetter--Drinfeld post-Hopf algebra $(H,\cdot,1,\Delta,\epsilon,S,\rightharpoonup')$ where
\[
a\rightharpoonup'b:=S(a_{1})\cdot(a_{2}\bullet_{\rightharpoonup}b)=S(a_{1})\cdot a_{2}\cdot(a_{3}\rightharpoonup b)=a\rightharpoonup b.
\]
Hence $GF=\mathrm{Id}$ and we obtain the following result.
\end{remark}

\begin{corollary}\label{YDPHisoYDBr}
    The categories $\mathcal{YD}\mathrm{PH}(\mathrm{Vec}_{\Bbbk})$ and $\mathcal{YD}\mathrm{Br}(\mathrm{Vec}_{\Bbbk})$ are isomorphic. 
\end{corollary}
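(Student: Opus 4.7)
The plan is to observe that essentially all the work has already been carried out: Corollary \ref{cor:fromPHtoBr} and Proposition \ref{prop:fromYDbracetoYDpostHopf} produce functors $F\colon\mathcal{YD}\mathrm{PH}(\mathrm{Vec}_{\Bbbk})\to\mathcal{YD}\mathrm{Br}(\mathrm{Vec}_{\Bbbk})$ and $G\colon\mathcal{YD}\mathrm{Br}(\mathrm{Vec}_{\Bbbk})\to\mathcal{YD}\mathrm{PH}(\mathrm{Vec}_{\Bbbk})$, and the Remark immediately preceding the statement already indicates why they are mutually inverse. So the task is just to assemble these pieces into a proof of $FG=\mathrm{Id}$ and $GF=\mathrm{Id}$, both on objects and on morphisms.

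First I would handle $FG=\mathrm{Id}$. Starting from a Yetter--Drinfeld brace $(H,\cdot,\bullet,1,\Delta,\epsilon,S,T)$, Proposition \ref{prop:fromYDbracetoYDpostHopf} produces the Yetter--Drinfeld post-Hopf algebra $(H,\cdot,1,\Delta,\epsilon,S,\rightharpoonup)$ with $a\rightharpoonup b=S(a_{1})\cdot(a_{2}\bullet b)$, and within its proof the identities $a\bullet_{\rightharpoonup}b=a\bullet b$ and $S_{\rightharpoonup}=T$ are already verified. Applying $F$ then yields the brace $(H,\cdot,\bullet_{\rightharpoonup},1,\Delta,\epsilon,S,S_{\rightharpoonup})=(H,\cdot,\bullet,1,\Delta,\epsilon,S,T)$, which is the one we started with.

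Next I would handle $GF=\mathrm{Id}$. Starting from a Yetter--Drinfeld post-Hopf algebra $(H,\cdot,1,\Delta,\epsilon,S,\rightharpoonup)$, applying $F$ gives the brace with second operation $\bullet_{\rightharpoonup}$ and antipode $S_{\rightharpoonup}$, and then applying $G$ produces a new action $\rightharpoonup'$ defined by $a\rightharpoonup' b:=S(a_{1})\cdot(a_{2}\bullet_{\rightharpoonup}b)$. Using \eqref{defbullet} and the antipode axiom $S(a_{1})\cdot a_{2}=\epsilon(a)1$, this simplifies to $a\rightharpoonup b$, exactly the computation recorded in the preceding Remark. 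Hence $\rightharpoonup'=\rightharpoonup$ as required.

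Finally, I would remark that both $F$ and $G$ leave underlying morphisms untouched: a morphism in either category is a $\Bbbk$-linear map preserving the relevant algebraic structure, and the conditions $f(a\bullet_{\rightharpoonup}b)=f(a)\bullet_{\rightharpoonup'}f(b)$ and $f(a\rightharpoonup b)=f(a)\rightharpoonup' f(b)$ are interconvertible via the defining formulas $a\bullet_{\rightharpoonup}b=a_{1}\cdot(a_{2}\rightharpoonup b)$ and $a\rightharpoonup b=S(a_{1})\cdot(a_{2}\bullet_{\rightharpoonup}b)$, which are already preserved by any morphism of algebras/coalgebras. Thus $F$ and $G$ are mutually inverse isomorphisms of categories. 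There is no serious obstacle here, since the content is entirely bookkeeping once the two preceding results are in hand; the only mild care needed is to record that the identifications $\bullet_{\rightharpoonup}\leftrightarrow\bullet$ and $S_{\rightharpoonup}\leftrightarrow T$ are strict (not just up to isomorphism), which is transparent from the formulas.
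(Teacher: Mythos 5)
Your proposal is correct and follows essentially the same route as the paper: it uses the functors $F$ and $G$ from Corollary \ref{cor:fromPHtoBr} and Proposition \ref{prop:fromYDbracetoYDpostHopf}, verifies $FG=\mathrm{Id}$ via the identifications $\bullet_{\rightharpoonup}=\bullet$ and $S_{\rightharpoonup}=T$ established inside the proof of Proposition \ref{prop:fromYDbracetoYDpostHopf}, and verifies $GF=\mathrm{Id}$ by the computation $S(a_{1})\cdot(a_{2}\bullet_{\rightharpoonup}b)=S(a_{1})\cdot a_{2}\cdot(a_{3}\rightharpoonup b)=a\rightharpoonup b$, exactly as in the remark preceding the corollary, with both functors acting as the identity on underlying maps.
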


By \cite[Theorem 3.25]{FS} we know that the category $\mathcal{YD}\mathrm{Br}(\mathrm{Vec}_{\Bbbk})$ is isomorphic to the category $\mathrm{MP}(\mathrm{Vec}_{\Bbbk})$ of matched pairs of actions. Thus we obtain the following result.

\begin{corollary}\label{cor:fromPHtoMP}
    The categories $\mathcal{YD}\mathrm{PH}(\mathrm{Vec}_{\Bbbk})$ and $\mathrm{MP}(\mathrm{Vec}_{\Bbbk})$ are isomorphic. Explicitly, from a Yetter--Drinfeld post-Hopf algebra $(H,\cdot,1,\Delta,\epsilon,S,\rightharpoonup)$ one obtains a matched pair of actions $(H_{\rightharpoonup},\rightharpoonup,\leftharpoonup)$ on the subadjacent Hopf algebra $H_{\rightharpoonup}$, where $\leftharpoonup$ is defined as in \eqref{defleftharp}. Conversely, given a matched pair of actions $(H,\rightharpoonup,\leftharpoonup)$ on a Hopf algebra $(H,\bullet,1,\Delta,\epsilon,T)$ one obtains a Yetter--Drinfeld post-Hopf algebra $(H,\cdot,1,\Delta,\epsilon,S,\rightharpoonup)$ where $a\cdot b:=a_{1}\bullet(T(a_{2})\rightharpoonup b)$ and $S(a):=a_{1}\rightharpoonup T(a_{2})$.
\end{corollary}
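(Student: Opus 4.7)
The plan is to obtain the desired isomorphism as the composition of two already established isomorphisms and then to identify the resulting assignments explicitly by chasing the definitions.

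More precisely, denote by $\Phi:\mathcal{YD}\mathrm{Br}(\mathrm{Vec}_{\Bbbk})\to\mathrm{MP}(\mathrm{Vec}_{\Bbbk})$ the isomorphism of categories provided by \cite[Theorem 3.25]{FS}, which sends a Yetter--Drinfeld brace $(H,\cdot,\bullet,1,\Delta,\epsilon,S,T)$ to the matched pair $(H,\rightharpoonup,\leftharpoonup)$ on the Hopf algebra $(H,\bullet,1,\Delta,\epsilon,T)$ with $a\rightharpoonup b:=S(a_{1})\cdot(a_{2}\bullet b)$ and $a\leftharpoonup b:=T(a_{1}\rightharpoonup b_{1})\bullet a_{2}\bullet b_{2}$; and conversely recovers the second operation via $a\cdot b:=a_{1}\bullet(T(a_{2})\rightharpoonup b)$ and the antipode of $\cdot$ via $S(a):=a_{1}\rightharpoonup T(a_{2})$. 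Combining $\Phi$ with the isomorphism $F:\mathcal{YD}\mathrm{PH}(\mathrm{Vec}_{\Bbbk})\to\mathcal{YD}\mathrm{Br}(\mathrm{Vec}_{\Bbbk})$ (and its inverse $G$) constructed in Corollary \ref{cor:fromPHtoBr} and Proposition \ref{prop:fromYDbracetoYDpostHopf} yields the claimed isomorphism of categories.

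It then remains to unfold the two compositions to obtain the explicit formulas. Starting from a Yetter--Drinfeld post-Hopf algebra $(H,\cdot,1,\Delta,\epsilon,S,\rightharpoonup)$, the functor $F$ returns the Yetter--Drinfeld brace $(H,\cdot,\bullet_{\rightharpoonup},1,\Delta,\epsilon,S,S_{\rightharpoonup})$ (Corollary \ref{cor:fromPHtoBr}); applying $\Phi$ produces a matched pair on $H_{\rightharpoonup}$ whose left action is, by \eqref{harpfromS}, exactly $\rightharpoonup$, and whose right action coincides by construction with the one defined in \eqref{defleftharp}. In the opposite direction, given a matched pair $(H,\rightharpoonup,\leftharpoonup)$ on a Hopf algebra $(H,\bullet,1,\Delta,\epsilon,T)$, the formulas of $\Phi^{-1}$ provide a Yetter--Drinfeld brace whose second operation is $a\cdot b=a_{1}\bullet(T(a_{2})\rightharpoonup b)$ and whose second antipode is $S(a)=a_{1}\rightharpoonup T(a_{2})$; the functor $G$ then attaches to it the Yetter--Drinfeld post-Hopf structure $(H,\cdot,1,\Delta,\epsilon,S,\rightharpoonup)$, which is precisely the one stated.

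The only step that requires a small verification is that, under the composition $F$ followed by $\Phi$, the subadjacent Hopf algebra $(H,\bullet_{\rightharpoonup},1,\Delta,\epsilon,S_{\rightharpoonup})$ of the Yetter--Drinfeld post-Hopf algebra is indeed the Hopf algebra on which the resulting matched pair lives; this was already recorded in the remark following Proposition \ref{prop:fromYDbracetoYDpostHopf}, and it ensures that the two descriptions $F\circ G=\mathrm{Id}$ and $G\circ F=\mathrm{Id}$ transport correctly through $\Phi$. I expect the main obstacle to be purely bookkeeping: one must check carefully that the right action $\leftharpoonup$ produced by $\Phi\circ F$ coincides with the one defined in \eqref{defleftharp}, and that the formula $a\cdot b=a_{1}\bullet(T(a_{2})\rightharpoonup b)$ recovered from the matched pair agrees with the definition of $\cdot$ in a Yetter--Drinfeld brace; but both identities are direct consequences of \eqref{harpfromS} and of the compatibility between $\rightharpoonup$, $\bullet_{\rightharpoonup}$ and $S_{\rightharpoonup}$ established in Proposition \ref{HharpHopf}.
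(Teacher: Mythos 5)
Your proposal is correct and follows exactly the paper's route: the paper obtains this corollary by composing the isomorphism $\mathcal{YD}\mathrm{PH}(\mathrm{Vec}_{\Bbbk})\cong\mathcal{YD}\mathrm{Br}(\mathrm{Vec}_{\Bbbk})$ (from Corollary \ref{cor:fromPHtoBr} and Proposition \ref{prop:fromYDbracetoYDpostHopf}) with the isomorphism $\mathcal{YD}\mathrm{Br}(\mathrm{Vec}_{\Bbbk})\cong\mathrm{MP}(\mathrm{Vec}_{\Bbbk})$ of \cite[Theorem 3.25]{FS}, and then reads off the explicit formulas, just as you do.
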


The previous result generalises \cite[Corollary 4.5]{YYT}. \medskip

To end this subsection we make a connection with braiding operators. It is known that matched pairs of actions on cocommutative Hopf algebras, braiding operators on cocommutative Hopf algebras and cocommutative Hopf braces are equivalent \cite{GGVe}. Moreover, in \cite[Theorem 6.34]{GGV} the category of Yetter--Drinfeld braces is proven to be isomorphic to the category 
of braiding operators on Hopf algebras \cite[Definition 5.1]{GGV}. Hence, from Corollary \ref{YDPHisoYDBr}, we obtain the following:

\begin{corollary}
    The category $\mathcal{YD}\mathrm{PH}(\mathrm{Vec}_{\Bbbk})$ is isomorphic to the category of braiding operators on Hopf algebras.
\end{corollary}

Let us recall the definition of \textit{braiding operator on a Hopf algebra} $H$ as stated in \cite[Definition 3.1]{Li}. It consists of a coalgebra morphism $r:H\ot H\to H\ot H$ such that:
\begin{itemize}
    \item[i)] $mr=m$, \medskip
    \item[ii)] $r(m\ot\mathrm{Id})=(\mathrm{Id}\ot m)(r\ot\mathrm{Id})(\mathrm{Id}\ot r)$,\medskip
    \item[iii)] $r(\mathrm{Id}\ot m)=(m\ot\mathrm{Id})(\mathrm{Id}\ot r)(r\ot\mathrm{Id})$,\medskip
    \item[iv)] $r(u\ot\mathrm{Id})=\mathrm{Id}\ot u$, \medskip
    \item[v)] $r(\mathrm{Id}\ot u)=u\ot\mathrm{Id}$,
\end{itemize}
where $m$ and $u$ denote the multiplication and the unit of $H$, respectively. 

In \cite[Theorem 5.27]{GGV} it is proved that braiding operators on Hopf algebras provide solutions of the braid equation. Thus, as it is shown in \cite[Theorem 3.2]{Li}, matched pairs of actions on Hopf algebras $(H,\rightharpoonup,\leftharpoonup)$ provide solutions of the braid equation $r(x\ot y):=(x_{1}\rightharpoonup y_{1})\ot(x_{2}\leftharpoonup y_{2})$. This result extends \cite[Corollary 2.4]{AGV}, obtained under the cocommutativity assumption. 

Therefore, from Corollary \ref{cor:fromPHtoMP}, we obtain:

\begin{proposition}
Let $(H,\cdot,1,\Delta,\epsilon,S,\rightharpoonup)$ be a Yetter--Drinfeld post-Hopf algebra. Then 
\begin{equation}\label{eq:solbraideq}
r:H\ot H\to H\ot H,\ x\ot y\mapsto (x_{1}\rightharpoonup y_{1})\ot(x_{2}\leftharpoonup y_{2})
\end{equation}
is a solution of the braid equation, where $\leftharpoonup$ is defined as in \eqref{defleftharp}. More explicitly, we have
\[
r(x\ot y)=(x_{1}\rightharpoonup y_{1})\ot\Big(\beta_{\rightharpoonup}(x_{4}\rightharpoonup y_{4})(x_{5}\rightharpoonup S(y_{5}))\cdot \beta_{\rightharpoonup}(x_{3}\rightharpoonup y_{3})( x_{6})\cdot \beta_{\rightharpoonup}(x_{2}\rightharpoonup y_{2})(x_{7}\rightharpoonup y_{6})\Big),
\]
where $\beta_{\rightharpoonup}$ is the convolution inverse of $\alpha_{\rightharpoonup}$.
\end{proposition}

\begin{proof}
    We compute
\[
\begin{split}
(x_{1}\rightharpoonup y_{1})&\ot(x_{2}\leftharpoonup y_{2})\overset{\eqref{defleftharp}}{=}(x_{1}\rightharpoonup y_{1})\ot\big(S_{\rightharpoonup}(x_{2}\rightharpoonup y_{2})\bullet_{\rightharpoonup}x_{3}\bullet_{\rightharpoonup}y_{3}\big)\\&\overset{\eqref{defbullet}}{=}(x_{1}\rightharpoonup y_{1})\ot\Big(S_{\rightharpoonup}(x_{3}\rightharpoonup y_{3})\cdot\big(S_{\rightharpoonup}(x_{2}\rightharpoonup y_{2})\rightharpoonup(x_{4}\cdot(x_{5}\rightharpoonup y_{4}))\big)\Big)\\&\overset{\eqref{dotlinear}}{=}(x_{1}\rightharpoonup y_{1})\ot\Big(S_{\rightharpoonup}(x_{4}\rightharpoonup y_{4})\cdot(S_{\rightharpoonup}(x_{3}\rightharpoonup y_{3})\rightharpoonup x_{5})\cdot\big(S_{\rightharpoonup}(x_{2}\rightharpoonup y_{2})\rightharpoonup(x_{6}\rightharpoonup y_{5})\big)\Big)\\&\ =(x_{1}\rightharpoonup y_{1})\ot\Big(S_{\rightharpoonup}(x_{4}\rightharpoonup y_{4})\cdot \alpha_{\rightharpoonup}S_{\rightharpoonup}(x_{3}\rightharpoonup y_{3})( x_{5})\cdot \alpha_{\rightharpoonup}S_{\rightharpoonup}(x_{2}\rightharpoonup y_{2})(x_{6}\rightharpoonup y_{5})\Big)\\&\overset{\eqref{betarightharp}}{=}(x_{1}\rightharpoonup y_{1})\ot\Big(S_{\rightharpoonup}(x_{4}\rightharpoonup y_{4})\cdot \beta_{\rightharpoonup}(x_{3}\rightharpoonup y_{3})( x_{5})\cdot \beta_{\rightharpoonup}(x_{2}\rightharpoonup y_{2})(x_{6}\rightharpoonup y_{5})\Big)\\&\overset{\eqref{defantipode},\eqref{Slinear}}{=}(x_{1}\rightharpoonup y_{1})\ot\Big(\beta_{\rightharpoonup}(x_{4}\rightharpoonup y_{4})(x_{5}\rightharpoonup S(y_{5}))\cdot \beta_{\rightharpoonup}(x_{3}\rightharpoonup y_{3})( x_{6})\cdot \beta_{\rightharpoonup}(x_{2}\rightharpoonup y_{2})(x_{7}\rightharpoonup y_{6})\Big)
\end{split}
\]
where $\beta_{\rightharpoonup}$ is the convolution inverse of $\alpha_{\rightharpoonup}$.
\end{proof}

\subsection{Examples of Yetter--Drinfeld post-Hopf algebras}

By Proposition \ref{prop:fromYDbracetoYDpostHopf} we can use Yetter--Drinfeld braces to provide examples of Yetter--Drinfeld post-Hopf algebras. In \cite[Section 6]{FS} some examples of Yetter--Drinfeld braces are given, using matched pairs of actions coming from coquasitriangular structures $\mathcal{R}$ (we refer the reader to \cite{Majid2} for coquasitriangular Hopf algebras). In this case, as it is said in \cite[Remark 5.7]{FS}, the structure $(H,\cdot,1,\Delta,\epsilon,S)$ results to be the transmutation of $(H,\bullet,1,\Delta,\epsilon,T,\mathcal{R})$, see \cite[dual of Example 9.4.10]{Majid2}, \cite{Majid1}. Hence, translating \cite[Theorem 5.6]{FS} into our setting we get the following result.

\begin{lemma}
    Let $(H,\bullet,1,\Delta,\epsilon,T,\mathcal{R})$ be a coquasitriangular Hopf algebra. Define $\cdot$ and $S$ as 
\[
a\cdot b:=a_{1}\bullet b_{2}\mathcal{R}^{-1}(T(a_{2})\otimes b_{1}\bullet T(b_{3})), \qquad S(a):=\mathcal{R}(a_{1}\otimes a_{5})T(a_{4})\mathcal{R}(a_{2}\otimes T(a_{3})),
\]
for all $a,b\in H$, where $\mathcal{R}^{-1}$ denotes the convolution inverse of $\mathcal{R}:H\otimes H\to\Bbbk$. Then, we have that $(H,\cdot,1,\Delta,\epsilon,S,\rightharpoonup)$ is a Yetter--Drinfeld post-Hopf algebra where $\rightharpoonup$ is defined by
\[
a\rightharpoonup b:=b_{2}\mathcal{R}^{-1}(a\otimes b_{1}\bullet T(b_{3}))
\]
and $\beta_{\rightharpoonup}a:b\mapsto b_{2}\Rr^{-1}(T(a)\otimes b_{1}\bullet T(b_{3}))$.
\end{lemma}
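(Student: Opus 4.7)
The plan is to reduce this statement to the machinery already developed. By Proposition \ref{prop:fromYDbracetoYDpostHopf}, any Yetter--Drinfeld brace $(H,\cdot,\bullet,1,\Delta,\epsilon,S,T)$ yields a Yetter--Drinfeld post-Hopf algebra with action $a\rightharpoonup b := S(a_{1})\cdot(a_{2}\bullet b)$ and convolution inverse $\beta_{\rightharpoonup}a:b\mapsto (T(a)\rightharpoonup b)$. So the strategy is: first invoke \cite[Theorem 5.6]{FS} (the transmutation construction) to get a YD-brace out of the coquasitriangular Hopf algebra $(H,\bullet,1,\Delta,\epsilon,T,\mathcal{R})$, where the second multiplication $\cdot$ and the map $S$ are exactly those written in the statement; then translate the resulting YD post-Hopf algebra structure under $G$ into the explicit formulas claimed for $\rightharpoonup$ and $\beta_{\rightharpoonup}$.

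First I would recall (or verify, referring back to \cite[Theorem 5.6]{FS} and \cite[dual of Example 9.4.10]{Majid2}) that the transmutation formulas give a YD-brace: the Hopf algebra part is the original $(H,\bullet,1,\Delta,\epsilon,T)$, the second product $\cdot$ is the transmuted one, and $S$ is the transmuted antipode. This gives us immediately, via Proposition \ref{prop:fromYDbracetoYDpostHopf}, that $(H,\cdot,1,\Delta,\epsilon,S,\rightharpoonup)$ with $a\rightharpoonup b := S(a_{1})\cdot(a_{2}\bullet b)$ is a Yetter--Drinfeld post-Hopf algebra. There is nothing to verify about the axioms \eqref{dotlinear}--\eqref{comp.Deltadot} or \eqref{MP5} at this stage, since they are guaranteed by the general correspondence.

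The only remaining task is the computational one: simplifying the expression $S(a_{1})\cdot(a_{2}\bullet b)$ using the explicit forms of $\cdot$ and $S$ above to obtain $b_{2}\mathcal{R}^{-1}(a\otimes b_{1}\bullet T(b_{3}))$, and analogously for $\beta_{\rightharpoonup}a(b)=T(a)\rightharpoonup b$. Substituting the formula for $\cdot$ into $S(a_{1})\cdot(a_{2}\bullet b)$, writing $(a_{2}\bullet b)$ in terms of the coproducts of $a_{2}$ and $b$, and then using the defining axioms of a coquasitriangular structure (bialgebra compatibility of $\mathcal{R}$, the hexagonal relations $\mathcal{R}(xy\otimes z)=\mathcal{R}(x\otimes z_{1})\mathcal{R}(y\otimes z_{2})$ and its mirror, and the antipode compatibility $\mathcal{R}(T(x)\otimes y)=\mathcal{R}^{-1}(x\otimes y)$) together with counitality should collapse the scalar factors produced by $\mathcal{R}$ and $\mathcal{R}^{-1}$ via convolution inversion. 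The telescoping is essentially the same as in the proof that transmutation of $(H,\bullet,\mathcal{R})$ recovers the braided-commutative algebra structure underlying the coquasitriangular bialgebra; applying a mirror computation with $T(a)$ in place of $a$ then yields the stated formula for $\beta_{\rightharpoonup}$.

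The main obstacle is precisely this combinatorial calculation with Sweedler indices and $\mathcal{R}$: one must carefully align the scalars produced by expanding $S(a_{1})$ (which already carries three $\mathcal{R}$-scalars) against those arising from the expansion of $a_{2}\cdot(\,\cdot\,)$ and from $a_{2}\bullet b$, so that all but one $\mathcal{R}^{-1}$ are absorbed by convolution inverses and counits. An effective bookkeeping device is to first check the computation for $\beta_{\rightharpoonup}$ directly from the transmutation, noting that $\beta_{\rightharpoonup}a(b)=T(a)\rightharpoonup b$ and exploiting the self-duality of the formulas under $a\leftrightarrow T(a)$; the formula for $\rightharpoonup$ then follows by replacing $a$ with $T(a)$ (or directly) and confirming that $\alpha_{\rightharpoonup}$ and $\beta_{\rightharpoonup}$ are convolution inverses of each other, which also serves as an internal consistency check for the manipulation with $\mathcal{R}$.
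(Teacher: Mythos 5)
Your proposal matches the paper's own route: the paper gives no independent proof of this lemma, but obtains it precisely by combining the transmutation/coquasitriangular construction of a Yetter--Drinfeld brace in \cite[Theorem 5.6]{FS} with Proposition \ref{prop:fromYDbracetoYDpostHopf}, exactly as you describe. The explicit formula $a\rightharpoonup b=b_{2}\mathcal{R}^{-1}(a\otimes b_{1}\bullet T(b_{3}))$ (and hence $\beta_{\rightharpoonup}a(b)=T(a)\rightharpoonup b$) is already supplied by \cite[Theorem 5.6]{FS}, so the Sweedler/$\mathcal{R}$ simplification you anticipate is only a matter of matching formulas rather than an additional obstacle.
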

In the examples of \cite[Section 6]{FS}, the morphisms $\rightharpoonup$ are explicitly computed, hence we just have to collect the results writing them in terms of the $\cdot$ operation. We will only explain the structures $(H,\cdot,1,\Delta,\epsilon,S,\rightharpoonup)$, saying which standard Hopf algebras they are the transmutation of, without indicating the $\Rr$ structures; the standard Hopf algebras correspond to the subadjacent Hopf algebras. We refer the reader to \cite[Section 6]{FS} for more details above the following examples. 

\begin{example}\label{ex:Sweedler}
Let $\Bbbk$ be a field of $\mathrm{char}(\Bbbk)\not=2$ and the algebra $(H, \cdot,1)$ generated by $g,x$ modulo the relations 
\[
g\cdot g=1,\quad x\cdot x=k1-kg,\quad x\cdot g=g\cdot x,
\]
where $k\in\Bbbk$. Define $\Delta$, $\epsilon$ and $S$ on $g$ and $x$ as
\[
\Delta(g)=g\otimes g,\quad \Delta(x)=x\otimes 1+g\otimes x,\quad \epsilon(g)=1_{\Bbbk},\quad \epsilon(x)=0,\quad S(g)=g,\quad S(x)=-x\cdot g.
\]
We have that $(H,\cdot,1,\Delta,\epsilon,S)$ is the transmutation of the Sweedler's Hopf algebra $H_{4}$, see \cite[6.1]{FS}.
Moreover, $(H,\cdot,1,\Delta,\epsilon,S,\rightharpoonup)$ is a Yetter--Drinfeld post-Hopf algebra, where $\rightharpoonup$ is given as in the following left diagram and $\alpha_{\rightharpoonup}$ has convolution inverse $\beta_{\rightharpoonup}$ explained in the right diagram.
\[
\begin{tabular}{c|cccc}
    $\rightharpoonup$& $1$ & $g$ & $x$ & $x\cdot g$\\ \hline
    $1$ & $1$ & $g$ & $x$ & $x\cdot g$ \\ 
    $g$ & $1$ & $g$ & $-x$ & $-x\cdot g$ \\ 
    $x$ & $0$ & $0$ & $k(1-g)$ & $k(g-1)$ \\ 
    $x\cdot g$ & $0$ & $0$ & $k(g-1)$ & $k(1-g)$ \\ 
\end{tabular}\qquad
\begin{tabular}{c|cccc}
    $\beta_{\rightharpoonup}$& $1$ & $g$ & $x$ & $x\cdot g$\\ \hline
    $1$ & $1$ & $g$ & $x$ & $x\cdot g$ \\ 
    $g$ & $1$ & $g$ & $-x$ & $-x\cdot g$ \\ 
    $x$ & $0$ & $0$ & $k(g-1)$ & $k(1-g)$ \\
    $x\cdot g$ &  $0$ & $0$ & $k(g-1)$ & $k(1-g)$ \\ 
\end{tabular}
\]
Let 
us observe that post-Hopf algebra structures on the Sweedler's Hopf algebra are classified in \cite[Example 2.12]{YYT}. The Sweedler's Hopf algebra is not cocommutative and indeed the post-Hopf algebra structures generally differ from the previous Yetter--Drinfeld post-Hopf algebra structures, in which $H$ is a Hopf monoid in the category of Yetter--Drinfeld modules and not a standard Hopf algebra.


\end{example}

\begin{example}\label{ex:E(n)}
   Let $\Bbbk$ be a field of $\mathrm{char}(\Bbbk)\not=2$. For a fixed $1\leq n\in\mathbb{N}$, consider the algebra $(H,\cdot,1)$ generated by $g,x_{1},\ldots, x_{n}$ subject to the relations 
\[
g\cdot g=1, \quad x_{i}\cdot x_{j}+x_{j}\cdot x_{i}=2A_{ij}(1-g),\quad x_{i}\cdot g=g\cdot x_{i},
\]
where $A=(A_{ij})_{ij}$ is an $n\times n$ symmetric matrix with entries in $\Bbbk$. Define $\Delta$, $\epsilon$ and $S$ on $g,x_{i}$ as
\[
\Delta(g)=g\otimes g,\quad \Delta(x_{i})=x_{i}\otimes1+g\otimes x_{i},\quad \epsilon(g)=1_{\Bbbk},\quad \epsilon(x_{i})=0,\quad S(g)=g, \quad S(x_{i})=-x_{i}\cdot g.
\]
In case $n=1$ we recover the previous example. We have that $(H,\cdot,1,\Delta,\epsilon,S)$ is the transmutation of the Hopf algebra $E(n)$, see \cite[6.2]{FS}. 
Moreover, $(H,\cdot,1,\Delta,\epsilon,S,\rightharpoonup)$ is a Yetter--Drinfeld post-Hopf algebra, where $\rightharpoonup$ and $\beta_{\rightharpoonup}$ are defined by the following tables.
\[
\begin{tabular}{c|cccc}
    $\rightharpoonup$& $1$ & $g$ & $x_{j}$ & $x_{j}\cdot g$\\ \hline
    $1$ & $1$ & $g$ & $x_{j}$ & $x_{j}\cdot g$ \\ 
    $g$ & $1$ & $g$ & $-x_{j}$ & $-x_{j}\cdot g$ \\ 
    $x_{i}$ & $0$ & $0$ & $A_{ij}(1-g)$ & $A_{ij}(g-1)$ \\ 
    $x_{i}\cdot g$ & $0$ & $0$ & $A_{ij}(g-1)$ & $A_{ij}(1-g)$ \\ 
\end{tabular}\qquad
\begin{tabular}{c|cccc}
    $\beta_{\rightharpoonup}$& $1$ & $g$ & $x_{j}$ & $x_{j}\cdot g$\\ \hline
    $1$ & $1$ & $g$ & $x_{j}$ & $x_{j}\cdot g$ \\ 
    $g$ & $1$ & $g$ & $-x_{j}$ & $-x_{j}\cdot g$ \\ 
    $x_{i}$ & $0$ & $0$ & $A_{ij}(g-1)$ & $A_{ij}(1-g)$ \\
    $x_{i}\cdot g$ &  $0$ & $0$ & $A_{ij}(g-1)$ & $A_{ij}(1-g)$ \\ 
\end{tabular}
\]
\end{example}

\begin{example}\label{ex:Slq2}
Let $0\neq q\in\mathbb{C}$ and consider the $\mathbb{C}$-algebra $(H,\cdot,1)$ generated by $a,b,c,d$ modulo the relations 
\begin{align*}&a\cdot b = q^2 b\cdot a, &&b\cdot c-c\cdot b = (q^2-1)\big(a \cdot a-a\cdot d\big),  \\
&a\cdot c=q^{-2}c\cdot a,&& b\cdot d - d\cdot b=(q^2-1)a\cdot b, \\
& a\cdot d = d\cdot a,&& c\cdot d-d\cdot c = (1-q^2) c\cdot a,  
&\end{align*}
and $a\cdot d-q^{-2}c\cdot b= 1$. Define $\Delta$, $\epsilon$ and $S$ on $a,b,c,d$ as 
\[\Delta\begin{pmatrix}
    a &b\\
    c & d
\end{pmatrix} = \begin{pmatrix}
    a &b\\
    c & d
\end{pmatrix}\ot \begin{pmatrix}
    a &b\\
    c & d
\end{pmatrix},\qquad \epsilon \begin{pmatrix}
    a &b\\
    c & d
\end{pmatrix} = \begin{pmatrix}
    1&0\\
    0&1
\end{pmatrix}, \qquad S\begin{pmatrix}
    a &b\\
    c & d
\end{pmatrix} = \begin{pmatrix}
    q^{-2}d+(1-q^{-2})a & -q^{-2}b\\
    -q^{-2}c & a
\end{pmatrix}.
\]
We have that $(H,\cdot,1,\Delta,\epsilon,S)$ is the transmutation of the Hopf algebra $\mathrm{SL}_{q}(2)$, see \cite[6.3]{FS}. Moreover, $(H,\cdot,1,\Delta,\epsilon,S,\rightharpoonup)$ is a Yetter--Drinfeld post-Hopf algebra where $\rightharpoonup$ is given as in the following diagram.
\[
\begin{tabular}{c|cccc}
    $\rightharpoonup$& $ a $ & $ b $ & $ c $ & $ d $\\ \hline
    $ a $ & $  a  $ & $q^{-1}b  $ & $qc $ & $ d $ \\ 
    $ b $ & $ (1-q^{-2}) b  $ & $0 $ & $q(1-q^{-2})(d-a) $
 & $(1-q^2)b $ \\ 
    $ c $ & $ 0 $ & $ 0 $ & $0$ & $ 0$  \\ 
    $ d $ & $ a   $ & $  qb  $ & $q^{-1} c  $ & $ d $ \\ 
\end{tabular}
\]
Moreover, $\alpha_{\rightharpoonup}$ has convolution inverse $\beta_{\rightharpoonup}$ given as in the following diagram.
\[
\begin{tabular}{c|cccc}
    $\beta_{\rightharpoonup}$& $ a $ & $ b $ & $ c $ & $ d $\\ \hline
    $ a $ & $  a  $ & $qb  $ & $q^{-1}c $ & $ d $ \\ 
    $ b $ & $ q(q^{-2}-1) b  $ & $0 $ & $(1-q^{2})(d-a) $
 & $-q(1-q^2)b $ \\ 
    $ c $ & $ 0 $ & $ 0 $ & $0$ & $ 0$  \\ 
    $ d $ & $ a   $ & $  q^{-1}b  $ & $q c  $ & $ d $ \\ 
\end{tabular}
\]    
\end{example}
\begin{example}\label{ex:Suzuki}
Let $\Bbbk$ be an algebraically closed field of $\mathrm{char}(\Bbbk)\not=2$. Consider the algebra $(H,\cdot,1)$ generated by $a,b,c,d$ modulo the relations \begin{align*} &a\cdot a=d \cdot d, \quad c \cdot c = \alpha\beta^{-1} b\cdot b,\quad c\cdot b = b\cdot c,\quad  a\cdot d = d\cdot a,\\
&a\cdot b=b\cdot a=a\cdot c=c\cdot a=b\cdot d=d\cdot b=c\cdot d=d\cdot c=0,
\end{align*}
where $\alpha,\beta\in\Bbbk$. Define $\Delta$, $\epsilon$ and $S$ on $a,b,c,d$ as
\[\Delta\begin{pmatrix}
    a &b\\
    c & d
\end{pmatrix} = \begin{pmatrix}
    a &b\\
    c & d
\end{pmatrix}\ot \begin{pmatrix}
    a &b\\
    c & d
\end{pmatrix},\qquad \epsilon \begin{pmatrix}
    a &b\\
    c & d
\end{pmatrix} = \begin{pmatrix}
    1&0\\
    0&1
\end{pmatrix}, \qquad S\begin{pmatrix}
    a &b\\
    c & d
\end{pmatrix} = \begin{pmatrix}
    \alpha^{2}\beta^{2}(a\cdot d^{2}) & \alpha^{4}(b^{2}\cdot c)\\
    \alpha^{4}(b\cdot c^{2}) & \alpha^{2}\beta^{2}(d\cdot a^{2})
\end{pmatrix}.
\]
We have that $(H,\cdot,1,\Delta,\epsilon,S)$ is the transmutation of the Suzuki Hopf algebra $A_{1,2}^{\nu,\lambda}$, see \cite[6.4]{FS}. Moreover, $(H,\cdot,1,\Delta,\epsilon,S,\rightharpoonup)$ is a Yetter--Drinfeld post-Hopf algebra where $\rightharpoonup$ is given as in the following diagram, which coincides also with the diagram of $\beta_{\rightharpoonup}$.
\[
\begin{tabular}{c|cccc}
    $\rightharpoonup$& $ a $ & $ b $ & $ c $ & $ d $\\ \hline
    $ a $ & $  d $ & $ \alpha^{-1}\beta c $ & $\alpha\beta^{-1} b $ & $ a $ \\ 
    $ b $ & $ 0 $ & $0 $ & $0 $
 & $0$ \\ 
    $ c $ & $ 0 $ & $ 0 $ & $0$ & $ 0$  \\ 
    $ d $ & $d  $ & $  \alpha\beta^{-1}c  $ & $\alpha^{-1}\beta b $ & $ a $ \\ 
    \end{tabular}
\]
\end{example}

\begin{remark}
    Notice that, starting from a Hopf algebra $(H,\bullet,1,\Delta,\epsilon,T)$, one can obtain a Yetter--Drinfeld brace through the matched pair of actions $(\rightharpoonup,\leftharpoonup)$ given by the left adjoint action $a\rightharpoonup b:=a_{1}\bullet b\bullet T(a_{2})$ and the right trivial action $a\leftharpoonup b:=a\epsilon(b)$ as in \cite[Lemma 3.22]{FS}. Hence, one obtains a Yetter--Drinfeld post-Hopf algebra $(H,\cdot,1,\Delta,\epsilon,S,\rightharpoonup)$ where $\cdot$ and $S$ are defined by
\[
a\cdot b:=a_{1}\bullet T(a_{3})\bullet b\bullet T(T(a_{2})), \qquad S(a):=a_{1}\bullet T(a_{3})\bullet T(a_{2}).
\]
Recall that $\beta_{\rightharpoonup}a:b\mapsto(T(a)\rightharpoonup b)$. This can be a way to produce other examples of Yetter--Drinfeld post-Hopf algebras, which are not post-Hopf algebras in general. 
\end{remark}

\section{Yetter--Drinfeld relative Rota--Baxter operators}\label{sec:YetterDrinfeldRotaBaxter}
\noindent The notion of \textit{Rota--Baxter operator} on a cocommutative Hopf algebra $H$ was introduced in \cite{Go} as a morphism of coalgebras $R:H\to H$ satisfying
\[
R(x)R(y)=R(x_{1}\mathrm{ad}_{L}(R(x_{2})\otimes y))=R(x_{1}R(x_{2})yS(R(x_{3})))\quad \text{for all}\ x,y\in H,
\]
where $\mathrm{ad}_{L}$ denotes the left adjoint action $H\otimes H\to H$, $x\otimes y\mapsto x_{1}yS(x_{2})$, then generalised for arbitrary actions with the notion of \textit{relative Rota--Baxter operator} on a cocommutative Hopf algebra \cite[Definition 3.1]{YYT}. Moreover, in \cite[Theorem 3.4]{YYT} an adjunction between the categories of relative Rota--Baxter operators on cocommutative Hopf algebras and cocommutative post-Hopf algebras was provided. 

\begin{remark}
    In \cite[Definition 3.1]{YYT} $H$ and $K$ are cocommutative Hopf algebras and $K$ is also an object in $\mathrm{Bimon}(_{H}\mathfrak{M})$ (where the braiding considered for $_{H}\mathfrak{M}$ is given by the canonical flip, which corresponds to the quasitriangular structure $1_{H}\otimes1_{H}$ on $H$). But then, mimicking the proof of \cite[Proposition 3.10]{FS}, one obtains that $S_{K}$ is in $_{H}\mathfrak{M}$, hence $K$ is actually an object in $\mathrm{Hopf}(_{H}\mathfrak{M})$.
\end{remark}

We introduce a generalisation of the notion of relative Rota--Baxter operator, which we call \textit{Yetter--Drinfeld relative Rota--Baxter operator}. A subcategory of the category of bijective Yetter--Drinfeld relative Rota--Baxter operators will turn out to be equivalent to the category of Yetter--Drinfeld post-Hopf algebras.

\begin{definition}
    Let $H$ be a Hopf algebra and $K$ be an object in $\mathrm{Bimon}(^{H}_{H}\mathcal{YD})$, and let the action of $H$ on $K$ be denoted by $\rightharpoonup$. A coalgebra morphism $R:K\to H$ is called a \textbf{Yetter--Drinfeld relative 
    Rota--Baxter operator} on $H$ with respect to $(K,\rightharpoonup)$ if the following equalities hold, for all $a,b\in K$:
\begin{equation}\label{RotaBaxterequation}
R(a)\cdot R(b)=R(a_{1}\cdot(R(a_{2})\rightharpoonup b)),
\end{equation}
\begin{equation}\label{RotaBaxterequation3}
\begin{split}
S_{H}R(R(a_{1})\rightharpoonup b_{1})\cdot&R(a_{2})\cdot R(b_{2})\otimes 
R(R(a_{3})\rightharpoonup b_{3})=S_{H}R(R(a_{2})\rightharpoonup b_{2})\cdot R(a_{3})\cdot R(b_{3})\otimes 
R(R(a_{1})\rightharpoonup b_{1}).
\end{split}
\end{equation}

A \textit{morphism of Yetter--Drinfeld relative 
Rota--Baxter operators} from $R:K\to H$ to $R':K'\to H'$ is a pair of morphisms of algebras and coalgebras $(f:H\to H',g:K\to K')$ such that 
\begin{equation}\label{morphRotaBaxter}
fR=R'g, \qquad g\rightharpoonup\,=\,\rightharpoonup'(f\otimes g).
\end{equation}
The category of Yetter--Drinfeld relative 
Rota--Baxter operators and morphisms of Yetter--Drinfeld relative 
Rota--Baxter operators will be denoted by $\mathcal{YD}\mathrm{rRB}(\mathrm{Vec}_{\Bbbk})$. 
\end{definition}

\begin{remark}
    Since $R(1_{K})$ is a group-like element in $G(H)$, it has inverse (given by $S_{H}R(1_{K})$). Moreover, from \eqref{RotaBaxterequation} we have 
\[
R(1_{K})\cdot R(1_{K})=R(1_{K}\cdot(R(1_{K})\rightharpoonup1_{K}))=R(\epsilon(R(1_{K}))1_{K})=R(1_{K}),
\]
hence $R(1_{K})=1_{H}$. 
\end{remark}

\begin{remark}
Notice that, if $K$ is cocommutative, then \eqref{RotaBaxterequation3} is automatically satisfied. 
Moreover, if $R:K\to H$ is bijective, the cocommutativity of $K$ induces the cocommutativity of $H$ (and vice versa). We point out that a subcategory of bijective Yetter--Drinfeld relative Rota--Baxter operators will turn out to be equivalent to the category of Yetter--Drinfeld post-Hopf algebras and \eqref{RotaBaxterequation3} will correspond to \eqref{MP5}. 
\end{remark}

Even when $R$ is bijective, the morphism $R^{-1}S_{H}R$ is not an antipode for $K$ in general, since $R$ is not a morphism of algebras, but we can equip $K$ with an antipode in the following way.

\begin{lemma}\label{lem:antipodeK}
    Let $R:K\to H$ be a bijective Yetter--Drinfeld relative Rota--Baxter operator. Then, the morphism $S_{K}:K\to K$, $a\mapsto(R(a_{1})\rightharpoonup R^{-1}S_{H}R(a_{2}))$ satisfies $S_{K}(a_{1})\cdot a_{2}=\epsilon(a)1_{K}=a_{1}\cdot S_{K}(a_{2})$. As a consequence, $K$ is an object in $\mathrm{Hopf}(^{H}_{H}\mathcal{YD})$.
\end{lemma}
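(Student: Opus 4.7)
The plan is to verify the two antipode identities separately, then invoke a result in the spirit of \cite[Proposition 3.10]{FS} to upgrade $K$ from a bimonoid to a Hopf monoid in $^H_H\mathcal{YD}$. The key observation is that the two convolution identities require different hypotheses: one falls out directly from \eqref{RotaBaxterequation2}, while the other follows from the Rota--Baxter equation \eqref{RotaBaxterequation} combined with the antipode axiom on $H$.

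For $S_K(a_1)\cdot a_2=\epsilon(a)1_K$, I would unfold $S_K$ and use coassociativity to write
\[
S_K(a_1)\cdot a_2=\bigl(R(a_1)\rightharpoonup R^{-1}S_HR(a_2)\bigr)\cdot a_3.
\]
Since $R$ is a coalgebra morphism, $R^{-1}$ is as well, so $a_3=R^{-1}(R(a)_3)$. The right-hand side is then precisely the left-hand side of \eqref{RotaBaxterequation2} with $h=R(a)$, which gives $\epsilon(R(a))1_K=\epsilon(a)1_K$.

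For $a_1\cdot S_K(a_2)=\epsilon(a)1_K$, I would first rearrange \eqref{RotaBaxterequation} by applying $R^{-1}$, obtaining the identity $R^{-1}(R(c)\cdot R(d))=c_1\cdot(R(c_2)\rightharpoonup d)$ for all $c,d\in K$. Using coassociativity, one rewrites
\[
a_1\cdot\bigl(R(a_2)\rightharpoonup R^{-1}S_HR(a_3)\bigr)=a_{11}\cdot\bigl(R(a_{12})\rightharpoonup R^{-1}S_HR(a_2)\bigr),
\]
and applies the identity above with $c=a_1$ and the fixed element $d=R^{-1}S_HR(a_2)$ to obtain $R^{-1}\bigl(R(a_1)\cdot S_HR(a_2)\bigr)$. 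Once more using that $R$ is a coalgebra morphism, $R(a_1)\cdot S_HR(a_2)=R(a)_1\cdot S_H(R(a)_2)=\epsilon_H(R(a))1_H=\epsilon(a)1_H$ by the antipode axiom on $H$, and since $R(1_K)=1_H$ (noted in the remark immediately above the lemma) we have $R^{-1}(1_H)=1_K$, yielding the claim.

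Once $S_K$ is shown to be a two-sided convolution inverse of $\id_K$ in $\mathrm{End}(K)$, the bimonoid structure $K\in\mathrm{Bimon}(^H_H\mathcal{YD})$ together with the fact that any antipode on a bialgebra in a braided monoidal category is automatically a morphism in that category---which is the content of \cite[Proposition 3.10]{FS}---yields $K\in\mathrm{Hopf}(^H_H\mathcal{YD})$. I do not anticipate a serious obstacle; the main care required is in the Sweedler bookkeeping, and in keeping straight at each step which avatar of the coalgebra-morphism property of $R$ is being used.
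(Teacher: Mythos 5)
Your proposal is correct and follows essentially the same route as the paper: the identity $S_K(a_1)\cdot a_2=\epsilon(a)1_K$ is read off from \eqref{RotaBaxterequation2} applied at $R(a)$ using that $R$ is a coalgebra morphism, while $a_1\cdot S_K(a_2)=\epsilon(a)1_K$ comes from \eqref{RotaBaxterequation} together with the antipode axiom on $H$ and the bijectivity of $R$ (the paper applies $R$ to $a_1\cdot S_K(a_2)$ and uses injectivity, you equivalently apply $R^{-1}$ to the Rota--Baxter identity), and the final upgrade to $\mathrm{Hopf}(^{H}_{H}\mathcal{YD})$ is by the same citation of \cite[Proposition 3.10]{FS}. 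No gaps beyond routine Sweedler bookkeeping.
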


\begin{proof}
First we compute
\[
\begin{split}
R(a_{1}\cdot S_{K}(a_{2}))&=R\Big(a_{1}\cdot\big(R(a_{2})\rightharpoonup R^{-1}S_{H}R(a_{3})\big)\Big)\overset{\eqref{RotaBaxterequation}}{=}R(a_{1})\cdot RR^{-1}S_{H}R(a_{2})=R(a)_{1}\cdot S_{H}(R(a)_{2})\\&=\epsilon(R(a))1_{H}=\epsilon(a)1_{H},
\end{split}
\]
so that $a_{1}\cdot S_{K}(a_{2})=\epsilon(a)1_{K}$. Moreover, using that $\rightharpoonup$ is multiplicative, we also have 
\[
\begin{split}
R\Big(S_{H}R(a_{1})\rightharpoonup(S_{K}(a_{2})\cdot a_{3})\Big)&=R\Big(\big(S_{H}R(a_{1})_{1}\rightharpoonup S_{K}(a_{2})\big)\cdot\big(S_{H}R(a_{1})_{2}\rightharpoonup a_{3}\big)\Big)\\&=R\Big(\big(S_{H}R(a_{2})\rightharpoonup S_{K}(a_{3})\big)\cdot\big(S_{H}R(a_{1})\rightharpoonup a_{4}\big)\Big)\\&=R\Big(\big(S_{H}R(a_{2})\rightharpoonup(R(a_{3})\rightharpoonup R^{-1}S_{H}R(a_{4}))\big)\cdot\big(S_{H}R(a_{1})\rightharpoonup a_{5}\big)\Big)\\&=R\Big(R^{-1}S_{H}R(a_{2})\cdot(S_{H}R(a_{1})\rightharpoonup a_{3})\Big)\\&=R\Big(R^{-1}S_{H}R(a_{1})_{1}\cdot\big(R(R^{-1}S_{H}R(a_{1})_{2})\rightharpoonup a_{2}\big)\Big)\\&=RR^{-1}S_{H}R(a_{1})\cdot R(a_{2})=S_{H}(R(a)_{1})\cdot R(a)_{2}\\&=\epsilon(a)1_{H}
\end{split}
\]
so that $S_{H}R(a_{1})\rightharpoonup(S_{K}(a_{2})\cdot a_{3})=\epsilon(a)1_{K}$. Then, we obtain
\[
S_{K}(a_{1})\cdot a_{2}=R(a_{1})\rightharpoonup\big(S_{H}R(a_{2})\rightharpoonup(S_{K}(a_{3})\cdot a_{4})\big)=R(a_{1})\rightharpoonup\epsilon(a_{2})1_{K}=\epsilon(R(a))1_{K}=\epsilon(a)1_{K}
\]
and so $K$ is an object in $\mathrm{Hopf}(^{H}_{H}\mathcal{YD})$ by \cite[Proposition 3.10]{FS}.
\end{proof}

We denote by $\mathrm{B}\mathcal{YD}\mathrm{rRB}(\mathrm{Vec}_{\Bbbk})$ the category of bijective Yetter--Drinfeld relative Rota--Baxter operators.

\begin{remark}
    We have shown that every bijective Yetter--Drinfeld relative Rota--Baxter operator $R:K\to H$ is such that $K$ is in $\mathrm{Hopf}(^{H}_{H}\mathcal{YD})$. Clearly one can deduce the same for Yetter--Drinfeld 1-cocycles in the sense of \cite[Definition 4.1]{FS}, so that (20) in \cite[Definition 4.1]{FS} becomes redundant. We have that $R$ is a bijective Yetter--Drinfeld relative Rota--Baxter operator if and only if $R^{-1}$ is a Yetter--Drinfeld 1-cocycle in the sense of \cite[Definition 4.1]{FS}; 
this result generalises \cite[Proposition 3.18]{HLTL}.
    
    Moreover, denoting by $\Cc$ the full subcategory of $\mathcal{YD}\mathrm{r
    RB}(\mathrm{Vec}_{\Bbbk})$ whose objects are Yetter--Drinfeld relative 
    Rota--Baxter operators $R:K\to H$ such that $K$ is in $\mathrm{Hopf}(^{H}_{H}\mathcal{YD})$, the category 
    $\mathrm{B}\mathcal{YD}\mathrm{rRB}(\mathrm{Vec}_{\Bbbk})$
    is a subcategory of $\Cc$.
\end{remark}

A Yetter--Drinfeld relative 
Rota–Baxter operator naturally induces relative Rota–Baxter operators on the group $G(H)$ and on the Lie algebra $P(H)$. 

\begin{proposition}
    Let $R:K\to H$ be a Yetter--Drinfeld relative 
    Rota–Baxter operator on $H$ with respect to $(K,\rightharpoonup)$. Then, the following statements hold:
\begin{itemize}
    \item[1)] $R|_{G(K)}:G(K)\to G(H)$ is a relative Rota–Baxter operator on the group $G(H)$ with respect to $(G(K),\rightharpoonup)$,\medskip
    \item[2)] $R|_{P(K)}:P(K)\to P(H)$ is a relative Rota–Baxter operator on the Lie algebra $P(H)$ with respect to $(P(K),\rightharpoonup)$.
\end{itemize}
\end{proposition}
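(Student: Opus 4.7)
My plan is to reduce both parts to direct specialisations of the defining identity \eqref{RotaBaxterequation}, after first verifying that the various structure maps restrict appropriately. The argument is essentially the Hopf-algebraic version of the fact that evaluating a coalgebra identity on a group-like element (resp.\@ a primitive element) reproduces the corresponding group-theoretic (resp.\@ Lie-theoretic) identity.

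First I would establish the set-up. Since $R$ is a morphism of coalgebras with $R(1_{K})=1_{H}$, it sends $G(K)$ into $G(H)$ and $P(K)$ into $P(H)$, so $R|_{G(K)}$ and $R|_{P(K)}$ are well defined. Because $\rightharpoonup\colon H\otimes K\to K$ is itself a morphism of coalgebras (as the structure action of a bimonoid in $^{H}_{H}\mathcal{YD}$), the identity $\Delta(h\rightharpoonup k)=(h_{1}\rightharpoonup k_{1})\otimes(h_{2}\rightharpoonup k_{2})$, together with the unit axioms $1_{H}\rightharpoonup k=k$ and $h\rightharpoonup 1_{K}=\epsilon(h)1_{K}$, forces $\rightharpoonup$ to restrict to a map $G(H)\times G(K)\to G(K)$ and to a map $P(H)\otimes P(K)\to P(K)$. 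The $H$-linearity of $m_{K}$ (part of the assumption $K\in\mathrm{Bimon}(^{H}_{H}\mathcal{YD})$) then upgrades the first restriction to an action by group automorphisms and the second to an action by derivations: expanding $h\rightharpoonup(kk')=(h_{1}\rightharpoonup k)(h_{2}\rightharpoonup k')$ with $\Delta(h)=h\otimes h$ gives multiplicativity, while expanding the same identity with $\Delta(x)=x\otimes 1+1\otimes x$ for $x\in P(H)$ gives the Leibniz rule $x\rightharpoonup(yz)=(x\rightharpoonup y)z+y(x\rightharpoonup z)$.

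For part (1), let $a,b\in G(K)$. Since $\Delta(a)=a\otimes a$, equation \eqref{RotaBaxterequation} collapses to
\[
R(a)\cdot R(b)=R\bigl(a\cdot(R(a)\rightharpoonup b)\bigr),
\]
which is exactly the group-theoretic relative Rota--Baxter identity recalled in the preliminaries. For part (2), let $x,y\in P(K)$. Since $\Delta(x)=x\otimes 1_{K}+1_{K}\otimes x$, $R(1_{K})=1_{H}$ and $1_{H}\rightharpoonup y=y$, \eqref{RotaBaxterequation} reduces to
\[
R(x)\cdot R(y)=R\bigl(xy+R(x)\rightharpoonup y\bigr),
\]
and antisymmetrising in $x$ and $y$ yields
\[
[R(x),R(y)]_{P(H)}=R\bigl([x,y]_{P(K)}+R(x)\rightharpoonup y-R(y)\rightharpoonup x\bigr),
\]
which is the Lie-algebraic relative Rota--Baxter identity of weight $1$.

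The only genuine work is the preliminary bookkeeping, and the main (though mild) obstacle will be confirming that the restricted $P(H)$-action on $P(K)$ is by derivations; this is the sole place where the full bimonoid structure of $K$ in $^{H}_{H}\mathcal{YD}$ is used, rather than just the coalgebra structure. Once that is in hand, both Rota--Baxter identities fall out of \eqref{RotaBaxterequation} by the one-line substitutions above, and the pre-Rota--Baxter case (no $R^{-1}$ is invoked) is identical.
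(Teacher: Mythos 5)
Your proposal is correct and follows essentially the same route as the paper: restrict $R$ to group-likes and primitives via the coalgebra-morphism property (and $R(1_K)=1_H$), check that $\rightharpoonup$ restricts to an action by automorphisms resp.\ derivations, and then specialise \eqref{RotaBaxterequation} to $\Delta(a)=a\otimes a$ resp.\ $\Delta(x)=x\otimes 1+1\otimes x$, antisymmetrising in the primitive case. This is exactly the argument the paper gives (borrowed from the cocommutative case of Li--Sheng--Tang), so no further comparison is needed.
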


\begin{invisible}
\begin{proof}
    Since $R$ is a morphism of coalgebras, it follows that $R|_{G(K)}$ is a map from $G(K)$ to $G(H)$ and, also since $R(1_{K})=1_{H}$, it follows that $R|_{P(K)}$ is a map from $P(K)$ to $P(H)$. Moreover, for $a\in G(K)$, we have that $\alpha_{\rightharpoonup}R(a):G(K)\to G(K),x\mapsto(R(a)\rightharpoonup x)$ is an automorphism and, from \eqref{RotaBaxterequation}, we obtain $R(a)\cdot R(b)=R(a\cdot(R(a)\rightharpoonup b))$, so 1) holds true. Furthermore, for $a\in P(K)$, we have that $\alpha_{\rightharpoonup}R(a)$ is a morphism in $\mathrm{Der}(P(K))$. Moreover, for any $a,b\in P(K)$, we have 
\begin{equation}\label{eq:rb}
R(a)\cdot R(b)\overset{\eqref{RotaBaxterequation}}{=}R(1\cdot(R(a)\rightharpoonup b))+R(a\cdot(R(1)\rightharpoonup b))=R(R(a)\rightharpoonup b)+R(a\cdot b)
\end{equation}
and then
\[
\begin{split}
R(R(a)\rightharpoonup b)-R(R(b)\rightharpoonup a)+R([a,b])&\overset{\eqref{eq:rb}}{=}R(a)\cdot R(b)-R(a\cdot b)-R(b)\cdot R(a)+R(b\cdot a)+R(a\cdot b)-R(b\cdot a)\\&=[R(a),R(b)],
\end{split}
\]
hence also 2) is satisfied.
\end{proof}
\end{invisible}

The previous result generalises \cite[Theorem 3.2]{YYT} and it has the same proof, which only uses \eqref{RotaBaxterequation} and the fact that $R$ is a morphism of coalgebras. Hence, Yetter--Drinfeld relative Rota--Baxter operators are natural candidates in a not-cocomutative setting.

\subsection{Connection with Yetter--Drinfeld post-Hopf algebras}

A Yetter--Drinfeld post-Hopf algebra produces a Yetter--Drinfeld relative Rota--Baxter operator, as it is shown in the following result that generalises \cite[Proposition 3.3]{YYT} removing the hypothesis of cocommutativity. 

\begin{proposition}\label{prop:fromPHtorRB}
    Let $(H,\rightharpoonup)$ be a Yetter--Drinfeld post-Hopf algebra and $H_{\rightharpoonup}$ the subadjacent Hopf algebra defined in Proposition \ref{HharpHopf}. Then, the identity map $\mathrm{Id}_{H}:H\to H_{\rightharpoonup}$ is a (bijective) Yetter--Drinfeld relative Rota--Baxter operator on $H_{\rightharpoonup}$ with respect to $(H,\rightharpoonup)$. Moreover, if $g:(H,\rightharpoonup)\to(H',\rightharpoonup')$ is a morphism of Yetter--Drinfeld post-Hopf algebras, then $(g:H_{\rightharpoonup}\to H'_{\rightharpoonup'},g:H\to H')$ is a morphism of Yetter--Drinfeld relative Rota--Baxter operators from $\mathrm{Id}_{H}:H\to H_{\rightharpoonup}$ to $\mathrm{Id}_{H'}:H'\to H'_{\rightharpoonup'}$. This yields a functor $L:\mathcal{YD}\mathrm{PH}(\mathrm{Vec}_{\Bbbk})\to\mathcal{YD}\mathrm{rRB}(\mathrm{Vec}_{\Bbbk})$.
\end{proposition}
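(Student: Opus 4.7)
The strategy is to verify each axiom of a Yetter--Drinfeld relative Rota--Baxter operator for $R = \mathrm{Id}_H : H \to H_{\rightharpoonup}$, showing that each reduces to a fact already packed into the definition of a Yetter--Drinfeld post-Hopf algebra or proved earlier. By Theorem \ref{HdotHopfYD}, $(H,\cdot,1,\Delta,\epsilon,S)$ lies in $\mathrm{Hopf}(^{H_{\rightharpoonup}}_{H_{\rightharpoonup}}\mathcal{YD})$ with action $\rightharpoonup$, hence in particular in $\mathrm{Bimon}(^{H_{\rightharpoonup}}_{H_{\rightharpoonup}}\mathcal{YD})$, as required of the domain of $R$. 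Since $H$ and $H_{\rightharpoonup}$ share the same coalgebra $(\Delta,\epsilon)$, the map $\mathrm{Id}_H$ is automatically a coalgebra morphism and is trivially bijective. For equation \eqref{RotaBaxterequation}, both sides become verbatim the definition \eqref{defbullet} of the subadjacent product, $a \bullet_{\rightharpoonup} b = a_1 \cdot (a_2 \rightharpoonup b)$.

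For condition \eqref{RotaBaxterequation3}, I would substitute $R = \mathrm{Id}_H$ and $S_H = S_{\rightharpoonup}$, and rewrite via the definition \eqref{defleftharp} of $\leftharpoonup$, using that $\leftharpoonup$ is a coalgebra morphism (equivalent to \eqref{MP5}). The left-hand side then becomes $(a_1 \leftharpoonup b_1) \otimes (a_2 \rightharpoonup b_2)$ and the right-hand side becomes $(a_2 \leftharpoonup b_2) \otimes (a_1 \rightharpoonup b_1)$, where the Sweedler indices refer to $\Delta(a)$ and $\Delta(b)$; the resulting equality is precisely the flip of \eqref{MP5}. For \eqref{RotaBaxterequation2}, which specialises to $(a_1 \rightharpoonup S_{\rightharpoonup}(a_2)) \cdot a_3 = \epsilon(a)1$, I would expand $S_{\rightharpoonup}$ via \eqref{defantipode} to $\alpha_{\rightharpoonup}(a_1)\bigl(\beta_{\rightharpoonup}(a_2)(S(a_3))\bigr) \cdot a_4$, then apply the convolution-inverse identity \eqref{alphainvconv} to collapse $\alpha_{\rightharpoonup}(a_1)\circ\beta_{\rightharpoonup}(a_2)$ into $\epsilon$, leaving $S(a_1)\cdot a_2 = \epsilon(a) 1$, which holds by definition of $S$. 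This last calculation requires some care with triple coproducts but is otherwise routine, and is the only step where a non-immediate identification is used.

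For the functorial assertion, given a morphism $g: (H,\rightharpoonup) \to (H',\rightharpoonup')$ of Yetter--Drinfeld post-Hopf algebras, I would first verify that $g: H_{\rightharpoonup} \to H'_{\rightharpoonup'}$ is a bialgebra morphism: multiplicativity with respect to $\bullet_{\rightharpoonup}$ follows from $g$ being multiplicative for $\cdot$ combined with \eqref{morphismYDpost}, through the same short Sweedler computation already carried out in the proof of Corollary \ref{cor:fromPHtoBr}. The two conditions in \eqref{morphRotaBaxter} for the pair $(g,g)$ reduce to the tautology $g \circ \mathrm{Id}_H = \mathrm{Id}_{H'} \circ g$ and to the intertwining $g(a \rightharpoonup b) = g(a) \rightharpoonup' g(b)$, which is \eqref{morphismYDpost} itself. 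Preservation of identities and composites by the assignment $L$ is immediate, so $L$ is a well-defined functor.
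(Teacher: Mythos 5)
Your proposal is correct and follows essentially the same route as the paper's proof: \eqref{RotaBaxterequation} is the definition \eqref{defbullet}, \eqref{RotaBaxterequation3} reduces via \eqref{defleftharp} to \eqref{MP5}, \eqref{RotaBaxterequation2} collapses through \eqref{defantipode} and \eqref{alphainvconv} to $S(a_1)\cdot a_2=\epsilon(a)1$, and functoriality rests on the $\bullet_{\rightharpoonup}$-multiplicativity of $g$ together with \eqref{morphismYDpost}. Your explicit check that the domain lies in $\mathrm{Bimon}(^{H_{\rightharpoonup}}_{H_{\rightharpoonup}}\mathcal{YD})$ via Theorem \ref{HdotHopfYD} is a minor extra precaution the paper leaves implicit (and the coalgebra-morphism property of $\leftharpoonup$ is not actually needed for the rewriting, only the definition and coassociativity), but these do not change the argument.
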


\begin{proof}
For any $a,b\in H$, we have 
\[
\mathrm{Id}_{H}(a)\bullet_{\rightharpoonup}\mathrm{Id}_{H}(b)=a\bullet_{\rightharpoonup}b\overset{\eqref{defbullet}}{=}a_{1}\cdot(a_{2}\rightharpoonup b)=\mathrm{Id}_{H}(a_{1}\cdot(\mathrm{Id}_{H}(a_{2})\rightharpoonup b)),
\]
so \eqref{RotaBaxterequation} holds true. Moreover, also
\eqref{RotaBaxterequation3} is satisfied:
\[
\begin{split}
S_{\rightharpoonup}\mathrm{Id}_{H}(\mathrm{Id}_{H}(a_{1})&\rightharpoonup b_{1})\bullet_{\rightharpoonup}\mathrm{Id}_{H}(a_{2})\bullet_{\rightharpoonup}\mathrm{Id}_{H}(b_{2})\otimes 
\mathrm{Id}_{H}
(\mathrm{Id}_{H}(a_{3})\rightharpoonup b_{3})=\\&\ =S_{\rightharpoonup}(a_{1}\rightharpoonup b_{1})\bullet_{\rightharpoonup}a_{2}\bullet_{\rightharpoonup}b_{2}\otimes 
(a_{3}\rightharpoonup b_{3})\\&\overset{\eqref{defleftharp}}{=}(a_{1}\leftharpoonup b_{1})\otimes 
(a_{2}\rightharpoonup b_{2})\\&\overset{\eqref{MP5}}{=}(a_{2}\leftharpoonup b_{2})\otimes 
(a_{1}\rightharpoonup b_{1})\\&\overset{\eqref{defleftharp}}{=}S_{\rightharpoonup}(a_{2}\rightharpoonup b_{2})\bullet_{\rightharpoonup}a_{3}\bullet_{\rightharpoonup}b_{3}\otimes 
(a_{1}\rightharpoonup b_{1})\\&\ =S_{\rightharpoonup}\mathrm{Id}_{H}(\mathrm{Id}_{H}(a_{2})\rightharpoonup b_{2})\bullet_{\rightharpoonup}\mathrm{Id}_{H}(a_{3})\bullet_{\rightharpoonup}\mathrm{Id}_{H}(b_{3})\otimes 
\mathrm{Id}_{H}
(\mathrm{Id}_{H}(a_{1})\rightharpoonup b_{1}).
\end{split}
\]
Thus, $\mathrm{Id}_{H}:H\to H_{\rightharpoonup}$ is a (bijective) Yetter--Drinfeld relative 
Rota--Baxter operator on $H_{\rightharpoonup}$ with respect to $(H,\rightharpoonup)$. 

Let $g:(H,\cdot,1,\Delta,\epsilon,S,\rightharpoonup)\to(H',\cdot',1',\Delta',\epsilon',S',\rightharpoonup')$ be a Yetter--Drinfeld post-Hopf algebra morphism, i.e. $g:H\to H'$ is a morphism of algebras and coalgebras which satisfies $g\rightharpoonup\,=\,\rightharpoonup'(g\otimes g)$. Clearly the pair $(g:H_{\rightharpoonup}\to H'_{\rightharpoonup'},g:H\to H')$ satisfies \eqref{morphRotaBaxter}. Moreover, we have
\[
g(x\bullet_{\rightharpoonup}y)\overset{\eqref{defbullet}}{=}g(x_{1}\cdot(x_{2}\rightharpoonup y))=g(x_{1})\cdot'g(x_{2}\rightharpoonup y)=g(x_{1})\cdot'(g(x_{2})\rightharpoonup' g(y))=g(x)\bullet_{\rightharpoonup'}g(y),
\]
so $g$ is also a morphism of algebras from $H_{\rightharpoonup}$ to $H'_{\rightharpoonup'}$ and the pair $(g:H_{\rightharpoonup}\to H'_{\rightharpoonup'},g:H\to H')$ is a morphism of Yetter--Drinfeld relative Rota--Baxter operators from $\mathrm{Id}_{H}:H\to H_{\rightharpoonup}$ to $\mathrm{Id}_{H'}:H'\to H'_{\rightharpoonup'}$. Clearly the assignment $L$ respects identities and compositions.
\end{proof}

\begin{remark}
The previous result can also be deduced in the following way. Given a Yetter--Drinfeld post-Hopf algebra $(H,\rightharpoonup)$ one obtains the Yetter--Drinfeld brace $(H,\cdot,\bullet_{\rightharpoonup},1,\Delta,\epsilon,S,S_{\rightharpoonup})$ as in Corollary \ref{cor:fromPHtoBr}. Then, from \cite[Theorem 4.3 (ii. to i.)]{FS}, one obtains the Yetter--Drinfeld 1-cocycle $\mathrm{Id}_{H}:H_{\rightharpoonup}\to H$, hence the Yetter--Drinfeld relative Rota--Baxter operator $\mathrm{Id}_{H}:H\to H_{\rightharpoonup}$.
\end{remark}

We can use Proposition \ref{prop:fromPHtorRB} to obtain examples of Yetter--Drinfeld relative Rota--Baxter operators.

\begin{example}
    In Examples \ref{ex:Sweedler}, \ref{ex:E(n)}, \ref{ex:Slq2} and \ref{ex:Suzuki} we obtained Yetter--Drinfeld post-Hopf algebras that coincide with the transmutation of the Sweedler's Hopf algebra $H_{4}$, the Hopf algebras $E(n)$, the Hopf algebra $\mathrm{SL}_{q}(2)$ and the Suzuki Hopf algebra $A_{1,2}^{\nu,\lambda}$, respectively. We denote them as \underline{$H_{4}$}, \underline{$E(n)$}, \underline{$\mathrm{SL}_q(2)$} and \underline{$A_{1,2}^{\nu,\lambda}$}. Therefore, using Proposition \ref{prop:fromPHtorRB}, we obtain that
\[    
    \mathrm{Id}:\underline{H_{4}}\to H_{4}, \quad \mathrm{Id}:\underline{E(n)}\to E(n), \quad \mathrm{Id}:\underline{\mathrm{SL}_q(2)}\to\mathrm{SL}_q(2),\quad \mathrm{Id}:\underline{A_{1,2}^{\nu,\lambda}}\to A_{1,2}^{\nu,\lambda}
\]
are all examples of Yetter--Drinfeld relative Rota--Baxter operators.
\end{example}

Let $\Dd$ be the full subcategory of $\mathrm{B}\mathcal{YD}\mathrm{rRB}(\mathrm{Vec}_{\Bbbk})$ whose objects are bijective Yetter--Drinfeld relative Rota--Baxter operators $R:K\to H$ such that $K$ has coaction given by $(\mathrm{Id}_{H}\otimes R^{-1})\mathrm{Ad}^{H}_{L}R$, where $\mathrm{Ad}^{H}_{L}$ denotes the left adjoint coaction on $H$ (equivalently, $R:K\to H$ is left $H$-colinear by considering $H$ equipped with $\mathrm{Ad}^{H}_{L}$).

\begin{remark}
If $H$ is cocommutative then $\mathrm{Ad}_{L}^{H}$ is the trivial $H$-coaction on $H$ and so the previous $H$-coaction on $K$ is trivial. Hence, in this case, $\sigma^{\mathcal{YD}}_{K,K}$ becomes the canonical flip $\tau_{K,K}$ and $K$ becomes a standard (cocommutative) Hopf algebra (and an object in $\mathrm{Hopf}(_{H}\mathfrak{M})$). Thus, the subcategory of $\Dd$ whose objects $R:K\to H$ are such that $H$ is cocommutative coincides with the category of bijective relative Rota--Baxter operators on cocommutative Hopf algebras, see \cite[Definition 3.1]{YYT}. 
\end{remark}



As it is shown in the following result, we can build a functor $M:\Dd\to\mathcal{YD}\mathrm{PH}(\mathrm{Vec}_{\Bbbk})$. 

\begin{proposition}\label{prop:PHonH}
    Let $R:K\to H$ be a bijective Yetter--Drinfeld relative Rota--Baxter operator on the Hopf algebra $(H,\cdot,1,\Delta,\epsilon,S)$ with respect to $(K,\rightharpoonup)$, where $K$ is in $^{H}\mm$ with coaction given by $(\mathrm{Id}_{H}\otimes R^{-1})\mathrm{Ad}^{H}_{L}R$. Then, $(H,\cdot_{R},1,\Delta,\epsilon,S_{R},\rightharpoonup_{R})$ is a Yetter--Drinfeld post-Hopf algebra where $\cdot_{R}$ and $S_{R}$ are defined, for all $a,b\in H$, by
\begin{equation}\label{productandantipode}
a\cdot_{R}b=R(R^{-1}(a)\cdot_{K}R^{-1}(b)), \qquad S_{R}(a):=R(a_{1}\rightharpoonup R^{-1}S(a_{2}))
\end{equation}
and $\rightharpoonup_{R}$ is defined by
\[
a\rightharpoonup_{R}b:=R(a\rightharpoonup R^{-1}(b)).
\]
\end{proposition}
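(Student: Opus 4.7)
The plan is to transport the Hopf structure on $K$ (given by Lemma \ref{lem:antipodeK}) to $H$ through the bijective coalgebra map $R$. The formulas in \eqref{productandantipode} simply turn $R$ into a Hopf algebra isomorphism $K\to(H,\cdot_{R},1,\Delta,\epsilon,S_{R})$, so $S_{R}$ is automatically the two-sided convolution inverse of $\mathrm{Id}_{H}$ with respect to $\cdot_{R}$. The first key computation I would perform is $a\bullet_{\rightharpoonup_{R}} b=a_{1}\cdot_{R}(a_{2}\rightharpoonup_{R} b)=a\cdot b$, which follows by unfolding the definitions, using that $R$ and $R^{-1}$ are coalgebra morphisms, and invoking \eqref{RotaBaxterequation}. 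In other words, the subadjacent Hopf algebra $H_{\rightharpoonup_{R}}$ will coincide with the ambient Hopf algebra $H$, and consequently $S_{\rightharpoonup_{R}}=S$, whose anti-comultiplicativity is thus automatic.

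Once $\bullet_{\rightharpoonup_{R}}=\cdot$ is in hand, most of the YD post-Hopf axioms reduce to properties of the $H$-action $\rightharpoonup\colon H\otimes K\to K$. The map $\rightharpoonup_{R}$ is a coalgebra morphism because $R$, $R^{-1}$ and $\rightharpoonup$ are; equation \eqref{dotlinear} is the transport of the fact that $K\in\mathrm{Mon}({}_{H}\mm)$; and \eqref{deformedassociativityright}, rewritten as \eqref{associativity} through $\bullet_{\rightharpoonup_{R}}=\cdot$, becomes the action axiom $(x\cdot y)\rightharpoonup R^{-1}(z)=x\rightharpoonup(y\rightharpoonup R^{-1}(z))$. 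I would take the convolution inverse of $\alpha_{\rightharpoonup_{R}}$ to be $\beta_{\rightharpoonup_{R}} x(y):=R(S(x)\rightharpoonup R^{-1}(y))$; the verification of \eqref{alphainvconv} then collapses to $x_{1}\cdot S(x_{2})=\epsilon(x)1_{H}$. For \eqref{comp.Deltadot}, I would observe that transporting the $H$-coaction $(\mathrm{Id}_{H}\otimes R^{-1})\mathrm{Ad}^{H}_{L}R$ on $K$ back along $R$ yields exactly the left adjoint coaction $\mathrm{Ad}^{H}_{L}$ of $H$ on itself; hence $(H,\cdot_{R},1,\Delta,\epsilon,S_{R})$ becomes a bialgebra in ${}^{H}_{H}\mathcal{YD}$ over $H$, and \eqref{comp.Deltadot} is precisely this bialgebra-in-YD compatibility expressed via $\alpha_{\rightharpoonup_{R}}$ and $\beta_{\rightharpoonup_{R}}$.

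The main obstacle will be \eqref{MP5} for the pair $(\rightharpoonup_{R},\leftharpoonup_{R})$, and this is exactly what condition \eqref{RotaBaxterequation3} was designed to deliver. My plan is to expand $a\leftharpoonup_{R} b$ via \eqref{defleftharp} together with the identities $\bullet_{\rightharpoonup_{R}}=\cdot$ and $S_{\rightharpoonup_{R}}=S$, and then rewrite everything in terms of $x=R^{-1}(a)$, $y=R^{-1}(b)\in K$; after this translation, \eqref{MP5} becomes \eqref{RotaBaxterequation3} (up to a harmless flip of the two tensor factors). This completes the verification of all axioms of a YD post-Hopf algebra for $(H,\cdot_{R},1,\Delta,\epsilon,S_{R},\rightharpoonup_{R})$.
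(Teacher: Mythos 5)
Your proposal is correct, but it proceeds quite differently from the paper. The paper's proof is a two-line reduction to previously established machinery: since $R$ is a Yetter--Drinfeld relative Rota--Baxter operator, $R^{-1}$ is a Yetter--Drinfeld 1-cocycle, so \cite[Theorem 4.3 (i.\ to ii.)]{FS} yields the Yetter--Drinfeld brace $(H,\cdot_{R},\cdot,1,\Delta,\epsilon,S_{R},S)$, and then Proposition \ref{prop:fromYDbracetoYDpostHopf} converts it into a Yetter--Drinfeld post-Hopf algebra; the only computation carried out is the identification of the induced action with $a\rightharpoonup_{R}b=R(a\rightharpoonup R^{-1}(b))$, using \eqref{RotaBaxterequation2} and the 1-cocycle property. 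You instead verify all the axioms directly by transporting the structure of $K$ along the bijective coalgebra map $R$, and your key observations $a\bullet_{\rightharpoonup_{R}}b=a\cdot b$, $S_{\rightharpoonup_{R}}=S$, $\beta_{\rightharpoonup_{R}}x(y)=R(S(x)\rightharpoonup R^{-1}(y))$, the identification of the transported coaction with $\mathrm{Ad}^{H}_{L}$ (whence \eqref{comp.Deltadot} is the bialgebra-in-${}^{H}_{H}\mathcal{YD}$ compatibility), and the translation of \eqref{MP5} into \eqref{RotaBaxterequation3} all check out; in effect you prove by hand what the paper records only afterwards in Remark \ref{remark}, namely that $H^{\cdot}$ is the subadjacent Hopf algebra. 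This buys a self-contained argument that makes transparent where each Rota--Baxter axiom is used, at the cost of redoing work that \cite{FS} and Proposition \ref{prop:fromYDbracetoYDpostHopf} already package. Two small presentational cautions: $K$ is a Hopf algebra only in ${}^{H}_{H}\mathcal{YD}$ (Lemma \ref{lem:antipodeK}), not in $\mathsf{Vec}_\Bbbk$, so ``$R$ is a Hopf algebra isomorphism'' should be read merely as ``$R$ transports $\cdot_{K}$, $1_{K}$ and $S_{K}$, giving that $S_{R}=RS_{K}R^{-1}$ is the convolution inverse of $\mathrm{Id}_{H}$ for $\cdot_{R}$''; and $S_{\rightharpoonup_{R}}=S$ should be justified by the direct computation $S_{\rightharpoonup_{R}}(a)=R\big(S(a_{1})\rightharpoonup(a_{2}\rightharpoonup R^{-1}S(a_{3}))\big)=S(a)$ rather than by uniqueness of the antipode of the subadjacent Hopf algebra, since invoking Proposition \ref{HharpHopf} at that stage would presuppose the axioms you are still verifying.
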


\begin{proof}
    Given a bijective Yetter--Drinfeld relative Rota--Baxter operator $R:K\to H$, we obtain a Yetter--Drinfeld 1-cocycle $R^{-1}:H\to K$. Thus, we can apply \cite[Theorem 4.3 (i. to ii.)]{FS} obtaining a Yetter--Drinfeld brace $(H,\cdot_{R},\cdot,1,\Delta,\epsilon,S_{R},S)$, where $\cdot_{R}$ and $S_{R}$ are defined as in \eqref{productandantipode}. Then, by Proposition \ref{prop:fromYDbracetoYDpostHopf} we have that $(H,\cdot_{R},1,\Delta,\epsilon,S_{R},\rightharpoonup_{R})$ is a Yetter--Drinfeld post-Hopf algebra where $\rightharpoonup_{R}$ is defined by
\[
\begin{split}
a\rightharpoonup_{R}b&:=S_{R}(a_{1})\cdot_{R}(a_{2}\cdot b)=R(a_{1}\rightharpoonup R^{-1}S(a_{2}))\cdot_{R}(a_{3}\cdot b)\\&\ =R\Big(R^{-1}R(a_{1}\rightharpoonup R^{-1}S(a_{2}))\cdot_{K}R^{-1}(a_{3}\cdot b)\Big)
\\&\ \overset{(!)}{=}R\Big(\big(a_{1}\rightharpoonup R^{-1}S(a_{2})\big)\cdot_{K}R^{-1}(a_{3})\cdot_{K}\big(a_{4}\rightharpoonup R^{-1}(b)\big)\Big)\\&\ =R\Big(S_{K}(R^{-1}(a_{1}))\cdot_{K}R^{-1}(a_{2})\cdot_{K}\big(a_{3}\rightharpoonup R^{-1}(b)\big)\Big)
\\&\ =R\Big(\epsilon(a_{1})1_{K}\cdot_{K}\big(a_{2}\rightharpoonup R^{-1}(b)\big)\Big)
\\&\ =R\big(a\rightharpoonup R^{-1}(b)\big),
\end{split}
\]
where the equality marked with $(!)$ follows using that $R^{-1}$ is a Yetter--Drinfeld 1-cocycle. 
\end{proof}

\begin{remark}\label{remark}
    Since we know that $(H,\cdot_{R},\cdot,1,\Delta,\epsilon,S_{R},S)$ is a Yetter--Drinfeld brace we immediately have that $H^{\cdot}:=(H,\cdot,1,\Delta,\epsilon,S)$ is the subadjacent Hopf algebra of $(H,\cdot_{R},1,\Delta,\epsilon,S_{R})$, so that $(H,\cdot_{R},1,\Delta,\epsilon,S_{R})$ is an object in $\mathrm{Hopf}(^{H^{\cdot}}_{H^{\cdot}}\mathcal{YD})$.
\end{remark}



Let us observe that the functor $L$ given in Proposition \ref{prop:fromPHtorRB} goes into the category $\Dd$. 
In analogy with \cite[Theorem 4.3]{FS}, using functors $L$ and $M$, we obtain the following result.

\begin{corollary}\label{cor:eqPHRB}
   The category $\mathcal{YD}\mathrm{PH}(\mathrm{Vec}_{\Bbbk})$ and the subcategory $\Dd$ of $\mathrm{B}\mathcal{YD}\mathrm{rRB}(\mathrm{Vec}_{\Bbbk})$ 
   are equivalent. 
\end{corollary}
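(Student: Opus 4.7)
The plan is to show that the functors $L$ (Proposition \ref{prop:fromPHtorRB}) and $M$ (Proposition \ref{prop:PHonH}) are mutually quasi-inverse, after observing that $L$ factors through $\Dd$: for $L(H,\rightharpoonup) = \mathrm{Id}_H\colon H \to H_\rightharpoonup$ the prescribed coaction $(\mathrm{Id}_{H_{\rightharpoonup}}\otimes \mathrm{Id}_H^{-1})\mathrm{Ad}_L^{H_{\rightharpoonup}}\mathrm{Id}_H$ collapses to $\mathrm{Ad}_L^{H_{\rightharpoonup}}$, which is precisely the coaction of $H$ in $^{H_{\rightharpoonup}}_{H_{\rightharpoonup}}\mathcal{YD}$ by Theorem \ref{HdotHopfYD}.

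For $ML = \mathrm{Id}_{\mathcal{YD}\mathrm{PH}(\mathrm{Vec}_\Bbbk)}$ on objects, I start from a YD post-Hopf algebra $(H,\cdot,\Delta,S,\rightharpoonup)$ and apply $M$ to $L(H,\rightharpoonup) = \mathrm{Id}_H\colon H \to H_\rightharpoonup$, obtaining a YD post-Hopf structure on $H_\rightharpoonup$ whose product is $a \cdot_{\mathrm{Id}_H} b = \mathrm{Id}_H(\mathrm{Id}_H^{-1}(a)\cdot \mathrm{Id}_H^{-1}(b)) = a\cdot b$, whose antipode is $S_{\mathrm{Id}_H}(a) = a_1\rightharpoonup S_\rightharpoonup(a_2) = \alpha_\rightharpoonup a_1(\beta_\rightharpoonup a_2(S(a_3))) = S(a)$ by \eqref{defantipode} and \eqref{alphainvconv}, and whose action is $a\rightharpoonup_{\mathrm{Id}_H} b = \mathrm{Id}_H(a\rightharpoonup \mathrm{Id}_H^{-1}(b)) = a\rightharpoonup b$. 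On morphisms both $L$ and $M$ act as the underlying algebra/coalgebra map, so $ML$ is the strict identity.

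For $LM \cong \mathrm{Id}_\Dd$, given $R\colon K\to H$ in $\Dd$, by Remark \ref{remark} the subadjacent Hopf algebra of $M(R) = (H,\cdot_R,\Delta,S_R,\rightharpoonup_R)$ is the original $(H,\cdot,\Delta,S)$, so $LM(R) = \mathrm{Id}_H\colon (H,\cdot_R,\Delta,S_R) \to (H,\cdot,\Delta,S)$. I would define $\eta_R := (\mathrm{Id}_H, R)$; the commutation $\mathrm{Id}_H\circ R = \mathrm{Id}_H\circ R$ is trivial, the intertwining $\mathrm{Id}_H(a)\rightharpoonup_R R(b) = R(a\rightharpoonup R^{-1}R(b)) = R(a\rightharpoonup b)$ follows from the definition of $\rightharpoonup_R$, and $R$ is an algebra morphism $(K,\cdot_K) \to (H,\cdot_R)$ because $R(x\cdot_K y) = R(R^{-1}R(x)\cdot_K R^{-1}R(y)) = R(x)\cdot_R R(y)$. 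As both components of $\eta_R$ are bijective, $\eta_R$ is an isomorphism in $\Dd$.

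The final step is naturality of $\eta$. Given $(f,g)\colon R\to R'$ in $\Dd$, the construction of Proposition \ref{prop:PHonH} places the new YD post-Hopf structure on the codomain $H$, so $M(f,g) = f$ and hence $LM(f,g) = (f,f)$. The naturality square then requires $\mathrm{Id}_{H'}\circ f = f\circ \mathrm{Id}_H$ on the first coordinate and $R'\circ g = f\circ R$ on the second; the former is trivial and the latter is exactly the first axiom of \eqref{morphRotaBaxter}. The only point requiring some care is checking that $M(f,g)$ is indeed $f$ (equivalently, that $f$ preserves $\cdot_R$ and $\rightharpoonup_R$ whenever $(f,g)$ preserves $\cdot_K$ and intertwines with $R,R'$), but this is a direct consequence of the defining formulas for $\cdot_R$ and $\rightharpoonup_R$ combined with the bijectivity of $R$ and $R'$.
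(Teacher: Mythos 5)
Your proposal is correct and follows essentially the same route as the paper: the same unit $(\mathrm{Id}_H,R)$ for $LM\cong\mathrm{Id}_{\Dd}$, the same computation $a_1\rightharpoonup S_{\rightharpoonup}(a_2)=S(a)$ giving $ML=\mathrm{Id}$, and Remark \ref{remark} identifying the subadjacent Hopf algebra of $M(R)$. You merely spell out the naturality check and the morphism-level behaviour of $M$, which the paper leaves implicit.
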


\begin{proof}
    Let $R:K\to H$ be an object in $\mathcal{D}$. Then, by Remark \ref{remark},
    $LM((R:K\to H))$ is the Yetter--Drinfeld relative Rota--Baxter operator $\mathrm{Id}_{H}:(H,\cdot_{R},1,\Delta,\epsilon,S_{R},\rightharpoonup_{R})\to(H,\cdot,1,\Delta,\epsilon,S)$. Since $\mathrm{Id}_{H}(a)\rightharpoonup_{R}R(b)=R(a\rightharpoonup R^{-1}R(b))=R(a\rightharpoonup b)$ and $R$ is a morphism of algebras with respect to the new algebra structure $(H,\cdot_{R},1)$, we have that $(\mathrm{Id}_{H},R)$ is an isomorphism of Yetter--Drinfeld relative Rota--Baxter operators between $R:K\to H$ and $LM((R:K\to H))$ (and it is clearly natural). Hence $LM\cong\mathrm{Id}$. On the other hand 
\[    
ML((H,\cdot,1,\Delta,\epsilon,S,\rightharpoonup))=M\Big(\mathrm{Id}_{H}:(H,\cdot,1,\Delta,\epsilon,S,\rightharpoonup)\to(H,\bullet_{\rightharpoonup},1,\Delta,\epsilon,S_{\rightharpoonup})\Big)=(H,\cdot,1,\Delta,\epsilon,S,\rightharpoonup),
\]
since $a_{1}\rightharpoonup S_{\rightharpoonup}(a_{2})=\alpha_{\rightharpoonup}a_{1}(\beta_{\rightharpoonup}a_{2}(S(a_{3})))=S(a)$. Thus $ML=\mathrm{Id}$.
\end{proof}

    
Restricting the previous equivalence, we obtain an equivalence between cocommutative post-Hopf algebras and bijective relative Rota--Baxter operators on cocommutative Hopf algebras.

\begin{remark}
If we restrict the functor $L$ to cocommutative Yetter--Drinfeld post-Hopf algebras we recover the functor given in \cite[Proposition 3.3]{YYT} from the category of cocommutative post-Hopf algebras to the category of (bijective) relative Rota--Baxter operators on cocommutative Hopf algebras. Moreover, in \cite[Theorem 3.4]{YYT} it is proven that this functor has a right adjoint. We want to recover this result from a different equivalence between the category of Yetter--Drinfeld post-Hopf algebras and the subcategory $\Dd$ of the category of bijective Yetter--Drinfeld relative Rota--Baxter operators.
\end{remark}

Given a bijective Yetter--Drinfeld relative Rota--Baxter operator $R:K\to H$ in $\mathcal{D}$, we want to induce a structure of Yetter--Drinfeld post-Hopf algebra on $K$.

\begin{proposition}\label{prop:secondequivalence}
Let $R:K\to H$ be an object in $\Dd$.
Then, $(K,\rightharpoonup_{R})$ is a Yetter--Drinfeld post-Hopf algebra where $\rightharpoonup_{R}$ is given by
\begin{equation}
a\rightharpoonup_{R}b:=R(a)\rightharpoonup b, \qquad \text{for all}\ a,b\in K.
\end{equation}
Let $R:K\to H$ and $R':K'\to H'$ be 
objects in $\mathcal{D}$ and $(f:H\to H',g:K\to K')$ a morphism between them. Then, $g$ is a morphism of Yetter--Drinfeld post-Hopf algebras from $(K,\rightharpoonup_{R})$ to $(K',\rightharpoonup'_{R'})$. This yields a functor $\mathsf{R}:\Dd\to\mathcal{YD}\mathrm{PH}(\mathrm{Vec}_{\Bbbk})$.
\end{proposition}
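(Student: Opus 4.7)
The plan is to leverage Proposition~\ref{prop:PHonH}, which already endows $H$ with a Yetter--Drinfeld post-Hopf structure $(H,\cdot_R,1,\Delta,\epsilon,S_R,\rightharpoonup_R^H)$ with $a\rightharpoonup_R^H b := R(a\rightharpoonup R^{-1}(b))$, and then transport that structure across the bijection $R\colon K\to H$ to obtain the candidate structure on $K$ with its original algebra and coalgebra. The key observation is that $R$ becomes an isomorphism between the two candidate post-Hopf algebras, so all the axioms transfer.

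First I would check that $R$ is an algebra isomorphism $(K,\cdot_K,1_K)\to(H,\cdot_R,1)$: from $a\cdot_R b = R(R^{-1}(a)\cdot_K R^{-1}(b))$ one reads off $R(x\cdot_K y) = R(x)\cdot_R R(y)$, and $R(1_K)=1_H$ was already established. Since $R$ is a coalgebra morphism by hypothesis, it is a bialgebra isomorphism. Next I would verify the intertwining
\[
R(a\rightharpoonup_R b) \;=\; R(R(a)\rightharpoonup b) \;=\; R\bigl(R(a)\rightharpoonup R^{-1}R(b)\bigr) \;=\; R(a)\rightharpoonup_R^H R(b),
\]
so $R$ intertwines the candidate action on $K$ with $\rightharpoonup_R^H$. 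Finally, for $S_K$ of Lemma~\ref{lem:antipodeK}, a direct unfolding gives $R(S_K(a)) = R\bigl(R(a_1)\rightharpoonup R^{-1}S_HR(a_2)\bigr) = S_R(R(a))$, so $R$ also intertwines $S_K$ with $S_R$.

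At this point every YD post-Hopf axiom on $K$ is the $R^{-1}$-image of the corresponding axiom on $H$: the coalgebra-morphism property of $\rightharpoonup_R$ follows from that of $\rightharpoonup_R^H$, the identities \eqref{dotlinear} and \eqref{deformedassociativityright} transfer via multiplicativity of $R$, the convolution-invertibility \eqref{alphainvconv} holds with inverse $\beta_{\rightharpoonup_R}(a)(b):=S_K(a)\rightharpoonup_R b$ corresponding under $R$ to $\beta_{\rightharpoonup_R^H}$ through \eqref{betarightharp}, the compatibility \eqref{comp.Deltadot} between $\Delta_K$ and $\cdot_K$ follows by applying $R\otimes R$ to the one on $H$, and the anti-coalgebra property of $S_K$ together with \eqref{MP5} for the induced $\leftharpoonup_R$ on $K$ are immediate. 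For functoriality: $g$ is by hypothesis a morphism of algebras and coalgebras, and the post-Hopf morphism condition unpacks to
\[
g(a\rightharpoonup_R b) \;=\; g(R(a)\rightharpoonup b) \;=\; f(R(a))\rightharpoonup' g(b) \;=\; R'(g(a))\rightharpoonup' g(b) \;=\; g(a)\rightharpoonup'_{R'} g(b),
\]
combining $g\circ\rightharpoonup = \rightharpoonup'\circ(f\otimes g)$ with $fR=R'g$; identities and compositions are clearly preserved.

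The main obstacle I expect is bookkeeping rather than any deep issue: one must be careful that the convolution inverse $\beta_{\rightharpoonup_R}$ on $K$ really corresponds via $R$ to the one on $H$ (so that \eqref{comp.Deltadot} on $K$ is indeed the $R^{-1}$-image of \eqref{comp.Deltadot} on $H$), and one should double-check that the coaction constraint on $K$ — namely $(\mathrm{Id}_H\otimes R^{-1})\mathrm{Ad}_L^H R$ — is exactly what is needed to translate the compatibility of $m_\cdot$ with $\mathrm{Ad}_L^H$ on $H$ into the corresponding compatibility on $K$. Once these identifications are pinned down, the whole verification reduces to applying $R$ or $R^{-1}$ to the axioms already established in Proposition~\ref{prop:PHonH} rather than re-deriving anything from the Rota--Baxter equations directly.
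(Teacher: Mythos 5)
Your overall strategy---transport the structure of Proposition \ref{prop:PHonH} across the bijection $R$, after checking that $R\colon (K,\cdot_K,1,\Delta,\epsilon)\to (H,\cdot_R,1,\Delta,\epsilon)$ is an algebra and coalgebra isomorphism intertwining $\rightharpoonup_R$ with $\rightharpoonup_R^H$ and $S_K$ with $S_R$---is viable and genuinely different from the paper's proof, which re-verifies every axiom on $K$ directly from \eqref{RotaBaxterequation}, \eqref{RotaBaxterequation3}, \eqref{RotaBaxterequation2} and the coaction hypothesis (using injectivity of $R$ for \eqref{MP5}). One thing the paper's direct route buys is that most of its computations do not use bijectivity of $R$ and are recycled verbatim for the functor $\mathsf{R}'$ on $\Cc'$; your transport argument is shorter but is intrinsically tied to $R$ being invertible.

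However, there is a concrete error: you systematically conflate the two antipodes of the transported structure. In the post-Hopf data on $K$ the map $S_K$ of Lemma \ref{lem:antipodeK} plays the role of ``$S$'' (it corresponds to $S_R$ under $R$, as you correctly check), whereas the subadjacent antipode $S_{\rightharpoonup_R}$ corresponds to $S_H$ (by Remark \ref{remark} the subadjacent Hopf algebra of $(H,\cdot_R,\rightharpoonup_R^H)$ is $H^{\cdot}=(H,\cdot,1,\Delta,\epsilon,S)$), so $S_{\rightharpoonup_R}=R^{-1}S_HR\neq S_K$ in general. Equation \eqref{betarightharp} reads $\beta_{\rightharpoonup}=\alpha_{\rightharpoonup}S_{\rightharpoonup}$ with the \emph{subadjacent} antipode, so the convolution inverse you must exhibit is $\beta_{\rightharpoonup_R}(a)(b)=S_{\rightharpoonup_R}(a)\rightharpoonup_R b=S_H(R(a))\rightharpoonup b$ (the paper's formula), not $\beta_{\rightharpoonup_R}(a)(b)=S_K(a)\rightharpoonup_R b=S_R(R(a))\rightharpoonup b$. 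With your choice, \eqref{alphainvconv} reduces, after setting $x=R(a)$ and using that $\rightharpoonup$ is an $H$-action for $\cdot_H$, to $x_1\cdot_H S_R(x_2)=\epsilon(x)1_H$, which fails in general: $S_R$ inverts the identity for $\cdot_R$, not for $\cdot_H$. The same confusion appears when you invoke ``the anti-coalgebra property of $S_K$'': the definition requires anti-comultiplicativity of $S_{\rightharpoonup_R}=R^{-1}S_HR$ (which indeed follows from that of $S_H$ since $R$ is a coalgebra isomorphism), while $S_K$, being the antipode of a Hopf monoid in $^{H}_{H}\mathcal{YD}$, is only braided anti-comultiplicative. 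Both slips are local and fixable---replace $S_K$ by $R^{-1}S_HR$ in these two places and the transport argument closes, since the convolution inverse is unique and all remaining axioms are equations in the transported structure maps---but as written the verification of \eqref{alphainvconv} and of the anti-comultiplicativity requirement would fail. The functoriality part of your argument coincides with the paper's and is correct.
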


\begin{proof}
    Let us observe that $\rightharpoonup_{R}:=\rightharpoonup(R\otimes\mathrm{Id}_{K})$. Thus, since $R$ and $\rightharpoonup$ are morphisms of coalgebras, also $\rightharpoonup_{R}$ is a morphism of coalgebras. Moreover, we already know that $(K,\cdot,1)$ is an algebra and $(K,\Delta,\epsilon)$ is a coalgebra and $S_{K}:K\to K$ satisfies $a_{1}\cdot S_{K}(a_{2})=\epsilon(a)1_{K}=S_{K}(a_{1})\cdot a_{2}$, for all $k\in K$. Moreover, since $\rightharpoonup$ satisfies \eqref{dotlinear}, the same holds for $\rightharpoonup_{R}$:
\[
\begin{split}
a\rightharpoonup_{R}(b\cdot c)&=R(a)\rightharpoonup(b\cdot c)\overset{\eqref{dotlinear}}{=}
(R(a_{1})\rightharpoonup b)\cdot(R(a_{2})\rightharpoonup c)=(a_{1}\rightharpoonup_{R}b)\cdot(a_{2}\rightharpoonup_{R}c).
\end{split}
\]
We already know that $\rightharpoonup$ is an $H$-action on $K$, so also \eqref{deformedassociativityright} is satisfied:
\[
\begin{split}
    a\rightharpoonup_{R}(b\rightharpoonup_{R}c)&=R(a)\rightharpoonup(R(b)\rightharpoonup c)=(R(a)\cdot R(b))\rightharpoonup c\overset{\eqref{RotaBaxterequation}}{=}R(a_{1}\cdot(R(a_{2})\rightharpoonup b))\rightharpoonup c\\&=
(a_{1}\cdot(a_{2}\rightharpoonup_{R}b))\rightharpoonup_{R}c.
\end{split}
\]
For all $a,b\in K$, define $(\beta_{\rightharpoonup_{R}}a)(b):=S_{H}(R(a))\rightharpoonup b$, so that
\[
\begin{split}
\beta_{\rightharpoonup_{R}}a_{1}(\alpha_{\rightharpoonup_{R}}a_{2}(b))&=
S_{H}R(a_{1})\rightharpoonup(R(a_{2})\rightharpoonup b)=
(S_{H}(R(a_{1}))\cdot R(a_{2}))\rightharpoonup b=\epsilon(R(a))1\rightharpoonup b=\epsilon(a)b,
\end{split}
\]
hence $(\beta_{\rightharpoonup_{R}}a_{1})\circ(\alpha_{\rightharpoonup_{R}}a_{2})=\epsilon(a)\mathrm{Id}_{K}$ for all $a\in K$. Similarly, $(\alpha_{\rightharpoonup_{R}}a_{1})\circ(\beta_{\rightharpoonup_{R}}a_{2})=\epsilon(a)\mathrm{Id}_{K}$, for all $a\in K$. We already know that $\epsilon(a\cdot b)=\epsilon(a)\epsilon(b)$ for all $a,b\in K$, $\epsilon(1_{K})=1_{\Bbbk}$ and $\Delta(1_{K})=1_{K}\otimes1_{K}$. Moreover, since $K$ is in $\mathrm{Bimon}(^{H}_{H}\mathcal{YD})$ and it is in $^{H}\mm$ with $(\mathrm{Id}_{H}\otimes R^{-1})\mathrm{Ad}^{H}_{L}R$, we have
\[
\sigma^{\mathcal{YD}}_{K,K}(a\otimes b):=
(R(a_{1})\cdot S_{H}(R(a_{3}))\rightharpoonup b)\otimes a_{2},
\]
so that
\[
\begin{split}
\Delta(a\cdot b)&=(m\otimes m)(\mathrm{Id}_{K}\otimes\sigma^{\mathcal{YD}}_{K,K}\otimes\mathrm{Id}_{K})(\Delta\otimes\Delta)(a\otimes b)\\&=a_{1}\cdot\big(R(a_{2})\cdot S_{H}(R(a_{4}))\rightharpoonup b_{1}\big)\otimes(a_{3}\cdot b_{2})\\&=a_{1}\cdot\big(R(a_{2})\rightharpoonup(S_{H}(R(a_{4}))\rightharpoonup b_{1})\big)\otimes(a_{3}\cdot b_{2})\\&=a_{1}\cdot\big(\alpha_{\rightharpoonup_{R}}a_{2}(\beta_{\rightharpoonup_{R}}a_{4}(b_{1}))\big)\otimes(a_{3}\cdot b_{2}),
\end{split},
\]
hence \eqref{comp.Deltadot} holds true. Furthermore, \eqref{defbullet} and  \eqref{defantipode} become
\[
a\bullet_{\rightharpoonup_{R}}b:=a_{1}\cdot(a_{2}\rightharpoonup_{R}b)=a_{1}\cdot(R(a_{2})\rightharpoonup b)\overset{\eqref{RotaBaxterequation}}{=}R^{-1}(R(a)\cdot R(b))
\]
and
\[
S_{\rightharpoonup_{R}}(a):=\beta_{\rightharpoonup_{R}}a_{1}(S_{K}(a_{2}))=S_{H}(R(a_{1}))\rightharpoonup S_{K}(a_{2}).
\]
But, by Lemma \ref{lem:antipodeK}, we know that $S_{K}(a)=R(a_{1})\rightharpoonup R^{-1}S_{H}R(a_{2})$ and then
\[
\begin{split}
S_{\rightharpoonup_{R}}(a)&=S_{H}(R(a_{1}))\rightharpoonup\big(R(a_{2})\rightharpoonup R^{-1}S_{H}R(a_{3})\big)=\big(S_{H}(R(a_{1}))\cdot R(a_{2})\big)\rightharpoonup R^{-1}S_{H}R(a_{3})=R^{-1}S_{H}R(a).
\end{split}
\]
Thus, $S_{\rightharpoonup_{R}}=R^{-1}S_{H}R$ and, since $R$ and $R^{-1}$ are comultiplicative and $S_{H}$ is anti-comultiplicative, $S_{\rightharpoonup_{R}}$ is anti-comultiplicative. Finally, \eqref{defleftharp} becomes
\[
\begin{split}
a\leftharpoonup_{R} b&:=S_{\rightharpoonup_{R}}(a_{1}\rightharpoonup_{R}b_{1})\bullet_{\rightharpoonup_{R}}a_{2}\bullet_{\rightharpoonup_{R}}b_{2}\\&\ =
R^{-1}\Big(RS_{\rightharpoonup_{R}}(a_{1}\rightharpoonup_{R}b_{1})\cdot R(a_{2})\cdot R(b_{2})\Big)\\&
\ =R^{-1}\Big(S_{H}R(R(a_{1})\rightharpoonup b_{1})\cdot R(a_{2})\cdot R(b_{2})\Big).
\end{split}
\]
So 
we can compute
\[
\begin{split}
(a_{1}\leftharpoonup_{R} b_{1})\otimes R(a_{2}\rightharpoonup_{R}b_{2}&)=R^{-1}\Big(S_{H}R(R(a_{1})\rightharpoonup b_{1})\cdot R(a_{2})\cdot R(b_{2})\Big)\otimes R\big(R(a_{3})\rightharpoonup b_{3}\big)
\\&\overset{\eqref{RotaBaxterequation3}}{=}R^{-1}\Big(S_{H}R(R(a_{2})\rightharpoonup b_{2})\cdot R(a_{3})\cdot R(b_{3})\Big)\otimes R\big(R(a_{1})\rightharpoonup b_{1}\big)\\&\ =(a_{2}\leftharpoonup_{R} b_{2})\otimes R(a_{1}\rightharpoonup_{R}b_{1}).
\end{split}
\]
Thus, since $R$ is injective, also \eqref{MP5} is satisfied and $(K,\rightharpoonup_{R})$ is a Yetter--Drinfeld post-Hopf algebra. 

Let $(f:H\to H',g:K\to K')$ be a morphism of Yetter--Drinfeld relative Rota–Baxter operators from $R:K\to H$ to $R':K'\to H'$. In particular, $g$ is a morphism of algebras and coalgebras; moreover it satisfies
\[
g(a\rightharpoonup_{R}b)=g(R(a)\rightharpoonup b)=f(R(a))\rightharpoonup' g(b)=R'(g(a))\rightharpoonup' g(b)=g(a)\rightharpoonup'_{R'}g(b),
\]
so $g$ is a morphism of Yetter--Drinfeld post-Hopf algebras from $(K,\rightharpoonup_{R})$ to $(K',\rightharpoonup'_{R'})$. Clearly, the assignment $\mathsf{R}$ respects identities and compositions.
\end{proof}

Hence, using the functors $L$ and $\mathsf{R}$, we obtain another equivalence between $\mathcal{YD}\mathrm{PH}(\mathrm{Vec}_{\Bbbk})$ and the subcategory $\Dd$ of $\mathrm{B}\mathcal{YD}\mathrm{rRB}(\mathrm{Vec}_{\Bbbk})$ which again restricts to an equivalence between cocommutative post-Hopf algebras and bijective relative Rota--Baxter operators on cocommutative Hopf algebras.

\begin{corollary}
    The categories $\mathcal{YD}\mathrm{PH}(\mathrm{Vec}_{\Bbbk})$ and $\mathcal{D}$ are equivalent.
\end{corollary}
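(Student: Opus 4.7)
The plan is to verify that the functors $L:\mathcal{YD}\mathrm{PH}(\mathrm{Vec}_{\Bbbk})\to\Dd$ from Proposition \ref{prop:fromPHtorRB} and $\mathsf{R}:\Dd\to\mathcal{YD}\mathrm{PH}(\mathrm{Vec}_{\Bbbk})$ from Proposition \ref{prop:secondequivalence} are quasi-inverse to one another; as in the proof of Corollary \ref{cor:eqPHRB}, one composition will be the identity on the nose, while the other will be a natural isomorphism witnessed by the Yetter--Drinfeld relative Rota--Baxter operator $R$ itself.

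First I would compute $\mathsf{R}L$. Given $(H,\rightharpoonup)\in\mathcal{YD}\mathrm{PH}(\mathrm{Vec}_{\Bbbk})$, the functor $L$ produces the Yetter--Drinfeld relative Rota--Baxter operator $\mathrm{Id}_{H}:H\to H_{\rightharpoonup}$, and applying $\mathsf{R}$ to this returns the Yetter--Drinfeld post-Hopf algebra $(H,\rightharpoonup')$ where, by Proposition \ref{prop:secondequivalence},
\[
a\rightharpoonup'b=\mathrm{Id}_{H}(a)\rightharpoonup b=a\rightharpoonup b.
\]
Hence $\mathsf{R}L=\mathrm{Id}_{\mathcal{YD}\mathrm{PH}(\mathrm{Vec}_{\Bbbk})}$ strictly.

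For $L\mathsf{R}$, I would take an object $R:K\to H$ in $\Dd$. The functor $\mathsf{R}$ yields the Yetter--Drinfeld post-Hopf algebra $(K,\rightharpoonup_{R})$ with $a\rightharpoonup_{R}b=R(a)\rightharpoonup b$, and applying $L$ gives $\mathrm{Id}_{K}:K\to K_{\rightharpoonup_{R}}$. Using \eqref{defbullet} together with \eqref{RotaBaxterequation}, the new subadjacent multiplication satisfies
\[
a\bullet_{\rightharpoonup_{R}}b=a_{1}\cdot(R(a_{2})\rightharpoonup b)=R^{-1}\bigl(R(a)\cdot R(b)\bigr),
\]
so $R$ is an isomorphism of bialgebras from $K_{\rightharpoonup_{R}}$ to $H$ (it was already a coalgebra map, and now we see it is multiplicative and bijective). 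Consequently the pair $(R:K_{\rightharpoonup_{R}}\to H,\;\mathrm{Id}_{K}:K\to K)$ is a morphism from $\mathrm{Id}_{K}:K\to K_{\rightharpoonup_{R}}$ to $R:K\to H$ in $\Dd$, both components of which are invertible. This provides the desired natural isomorphism $L\mathsf{R}\cong\mathrm{Id}_{\Dd}$; naturality with respect to a morphism $(f,g):R\to R'$ of $\Dd$ reduces to the identity $fR=R'g$, which is precisely part of \eqref{morphRotaBaxter}.

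The main obstacle is bookkeeping rather than conceptual: one has to confirm that $(R,\mathrm{Id}_{K})$ genuinely lives in $\Dd$, in the sense that the $H$-action $\rightharpoonup$ on $K$ (viewed as attached to $R:K\to H$) and the action $\rightharpoonup_{R}$ on $K_{\rightharpoonup_{R}}$ intertwine under $R$ as required by \eqref{morphRotaBaxter}, and that the prescribed left $H$-coaction $(\mathrm{Id}_{H}\otimes R^{-1})\mathrm{Ad}^{H}_{L}R$ on $K$ matches the adjoint coaction on the subadjacent Hopf algebra $K_{\rightharpoonup_{R}}$ transported along $R$. These compatibilities follow from the explicit formulas for $\rightharpoonup_{R}$, $\bullet_{\rightharpoonup_{R}}$ and $S_{\rightharpoonup_{R}}=R^{-1}S_{H}R$ already established in Proposition \ref{prop:secondequivalence}, so once they are recorded the equivalence follows.
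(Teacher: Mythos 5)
Your proposal is correct and follows essentially the same route as the paper: $\mathsf{R}L=\mathrm{Id}$ on the nose, and $L\mathsf{R}\cong\mathrm{Id}_{\Dd}$ via a natural isomorphism whose components have $\mathrm{Id}_{K}$ in the second slot and (an iso built from) $R$ in the first. The only cosmetic difference is that you take $(R:K_{\rightharpoonup_{R}}\to H,\mathrm{Id}_{K})$ pointing from $L\mathsf{R}(R)$ to $R$ and verify multiplicativity via \eqref{RotaBaxterequation}, whereas the paper uses the inverse pair $(R^{-1}:H\to K_{R},\mathrm{Id}_{K})$ in the opposite direction and invokes the Yetter--Drinfeld $1$-cocycle property of $R^{-1}$; these are equivalent checks.
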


\begin{proof}
   Given a Yetter--Drinfeld post-Hopf algebra $(H,\rightharpoonup)$, we have $\mathsf{R}L((H,\rightharpoonup))=\mathsf{R}\big(\mathrm{Id}_{H}:H\to H_{\rightharpoonup}\big)=(H,\rightharpoonup)$, so $\mathsf{R}L=\mathrm{Id}$. On the other hand, given $R:K\to H$ in $\mathcal{D}$, we have
\[
L\mathsf{R}((R:K\to H))=L((K,\rightharpoonup_{R}))=\big(\mathrm{Id}_{K}:(K,\rightharpoonup_{R})\to K_{R}\big).
\]
The pair $(R^{-1}:H\to K_{R},\mathrm{Id}_{K})$ clearly satisfies \eqref{morphRotaBaxter}: indeed $R^{-1}(a)\rightharpoonup_{R}\mathrm{Id}_{K}(b)=RR^{-1}(a)\rightharpoonup b=\mathrm{Id}_{K}(a\rightharpoonup b)$. Moreover, $R^{-1}$ is a morphism of coalgebras and 
\[
R^{-1}(a)\bullet_{\rightharpoonup_{R}}R^{-1}(b)=R^{-1}(a_{1})\cdot(a_{2}\rightharpoonup R^{-1}(b))=R^{-1}(a\cdot b),
\]
where the last equality follows since $R^{-1}$ is a Yetter--Drinfeld 1-cocycle. Thus, $(R^{-1},\mathrm{Id}_{K})$ is an isomorphism (which is clearly natural) and so $L\mathsf{R}\cong\mathrm{Id}$.
\end{proof}

\begin{remark}
Notice that the bijectivity of $R:K\to H$ is not used in the first part of Proposition \ref{prop:secondequivalence}. Hence one can try to obtain a similar result considering the category $\mathcal{YD}\mathrm{r
RB}(\mathrm{Vec}_{\Bbbk})$ of Yetter--Drinfeld relative 
Rota--Baxter operators and its subcategory $\Cc$ whose objects $R:K\to H$ are such that $K$ is in $\mathrm{Hopf}(^{H}_{H}\mathcal{YD})$. 
\end{remark}

Define the subcategory $\Cc'$ of $\Cc$ whose objects are injective Yetter--Drinfeld relative 
Rota--Baxter operators $R:K\to H$ such that $K$ has $H$-coaction given by $a\mapsto R(a_{1})S_{H}(R(a_{3}))\otimes a_{2}$. 

\begin{remark}
The difference between the categories $\Cc'$ and $\Dd$ is simply that an object $R:K\to H$ in $\Cc'$ is not necessarily invertible; the category $\Dd$ is a subcategory of $\Cc'$. Moreover, the subcategory of $\Cc'$ whose objects $R:K\to H$ are such that $H$ is cocommutative (so also $K$ is cocommutative since $R$ is injective) is the category of injective relative Rota--Baxter operators on cocommutative Hopf algebras, see \cite[Definition 3.1]{YYT}. 
\end{remark}

Notice that the functor $L$ of Proposition \ref{prop:fromPHtorRB} goes into the category $\Cc'$. We can then show the following result.

\begin{proposition}
    Let $R:K\to H$ be an object in $\Cc'$. Then, $(K,\rightharpoonup_{R})$ is a Yetter--Drinfeld post-Hopf algebra where $\rightharpoonup_{R}$ is defined by $a\rightharpoonup_{R}b:=R(a)\rightharpoonup b$, for all $a,b\in K$. Consider also the same assignment on morphisms of the functor $\mathsf{R}$ given in Proposition \ref{prop:secondequivalence}. This yields a functor $\mathsf{R}':\Cc'\to\mathcal{YD}\mathrm{PH}(\mathrm{Vec}_{\Bbbk})$, which coincides with $\mathsf{R}$ when it is restricted to $\Dd$. Moreover, the functor $\mathsf{R}'$ is a right adjoint of the functor $L$.
\end{proposition}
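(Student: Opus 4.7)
The plan is to first verify that the object assignment $(R\colon K\to H)\mapsto(K,\rightharpoonup_R)$ is well-defined on $\Cc'$, then check functoriality on morphisms, then exhibit the adjunction $L\dashv\mathsf{R}'$ by showing that $\mathsf{R}'L$ is strictly the identity and constructing an explicit counit.

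For well-definedness on objects, I would trace through the proof of Proposition \ref{prop:secondequivalence} and isolate where the bijectivity of $R$ was used. It appears in precisely two places: to recognise the $H$-coaction on $K$ as $(\mathrm{Id}_H\otimes R^{-1})\mathrm{Ad}^H_L R$, and to produce $S_K$ via Lemma \ref{lem:antipodeK}. In the $\Cc'$ setting the coaction is \emph{defined} to be $a\mapsto R(a_1)\cdot S_H(R(a_3))\otimes a_2$, which agrees with the previous formula when $R^{-1}$ exists, and $K$ is already assumed to be in $\mathrm{Hopf}(^{H}_{H}\mathcal{YD})$, so $S_K$ is given. Under these identifications the coalgebra morphism property of $\rightharpoonup_R$, equations \eqref{dotlinear} and \eqref{deformedassociativityright}, the convolution invertibility of $\alpha_{\rightharpoonup_R}$ with $\beta_{\rightharpoonup_R}a\colon b\mapsto S_H(R(a))\rightharpoonup b$, and the compatibility \eqref{comp.Deltadot} all transfer verbatim from Proposition \ref{prop:secondequivalence}. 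The anti-comultiplicativity of $S_{\rightharpoonup_R}$ must be re-derived since the formula $S_{\rightharpoonup_R}=R^{-1}S_HR$ is no longer available; instead I would work directly from $S_{\rightharpoonup_R}(a)=S_H(R(a_1))\rightharpoonup S_K(a_2)$, using that $\rightharpoonup$ is a coalgebra map, that $S_H$ and $S_K$ are each anti-comultiplicative (the latter in the braided sense with the YD-braiding), and that the braiding on $K$ is governed by the given $H$-coaction. Finally, \eqref{MP5} follows from \eqref{RotaBaxterequation3} after cancelling the injective $R$ exactly as in the previous proof. Functoriality on morphisms is a copy of the corresponding argument in Proposition \ref{prop:secondequivalence}.

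For the adjunction, observe that on objects $\mathsf{R}'L((H,\rightharpoonup))=\mathsf{R}'(\mathrm{Id}_H\colon H\to H_\rightharpoonup)=(H,\rightharpoonup_{\mathrm{Id}_H})=(H,\rightharpoonup)$, and on a morphism $g$ one has $\mathsf{R}'L(g)=g$ since $L(g)$ has $g$ as its second component. Hence $\mathsf{R}'L=\mathrm{Id}_{\mathcal{YD}\mathrm{PH}(\mathrm{Vec}_\Bbbk)}$ strictly, and I take the unit $\eta$ of the adjunction to be the identity natural transformation. For the counit at an object $R\colon K\to H$ of $\Cc'$, the candidate is the pair
\[
\varepsilon_R:=\bigl(R\colon K_{\rightharpoonup_R}\to H,\ \mathrm{Id}_K\colon K\to K\bigr)\colon L\mathsf{R}'\bigl(R\bigr)\to R.
\]
Its being a morphism in $\Cc'$ requires $R\colon K_{\rightharpoonup_R}\to H$ to be a bialgebra morphism, which is precisely \eqref{RotaBaxterequation}: $R(a\bullet_{\rightharpoonup_R}b)=R(a_1\cdot(R(a_2)\rightharpoonup b))=R(a)\cdot R(b)$. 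The intertwining $\mathrm{Id}_K(a\rightharpoonup_R b)=R(a)\rightharpoonup\mathrm{Id}_K(b)$ is tautological. Naturality of $\varepsilon$ follows from the defining compatibility $fR=R'g$ of morphisms in $\Cc'$.

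The triangle identities then collapse to tautologies. Since $\eta=\mathrm{id}$, we need $\varepsilon_{L(H,\rightharpoonup)}=\mathrm{id}_{L(H,\rightharpoonup)}$ and $\mathsf{R}'(\varepsilon_R)=\mathrm{id}_{\mathsf{R}'(R)}$. The first holds because $\rightharpoonup_{\mathrm{Id}_H}=\rightharpoonup$ forces $K_{\rightharpoonup_{\mathrm{Id}_H}}=H_\rightharpoonup$, so $\varepsilon$ at $\mathrm{Id}_H\colon H\to H_\rightharpoonup$ is $(\mathrm{Id}_H,\mathrm{Id}_H)$; the second is immediate since $\mathsf{R}'$ extracts the $K$-component. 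The step I expect to be the main obstacle is the anti-comultiplicativity of $S_{\rightharpoonup_R}$, because one loses the clean identity $S_{\rightharpoonup_R}=R^{-1}S_HR$ that trivialised the issue in Proposition \ref{prop:secondequivalence}; everything else is either a direct transcription of the earlier proof or a straightforward verification from \eqref{RotaBaxterequation}.
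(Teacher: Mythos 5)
Your adjunction argument is fine, but the object-level verification that $(K,\rightharpoonup_R)$ is a Yetter--Drinfeld post-Hopf algebra has a genuine gap at precisely the step you flag as "the main obstacle" and leave as a plan. The missing ingredient is the identity $R\,S_{\rightharpoonup_R}=S_H R$, which in the bijective case was automatic from $S_{\rightharpoonup_R}=R^{-1}S_HR$ but now has to be proved. The paper's proof establishes it in three short steps: first $R(a_1)\rightharpoonup S_{\rightharpoonup_R}(a_2)=\big(R(a_1)\cdot S_H(R(a_2))\big)\rightharpoonup S_K(a_3)=S_K(a)$, hence by \eqref{RotaBaxterequation} one gets $R(a_1)\cdot R(S_{\rightharpoonup_R}(a_2))=R(a_1\cdot S_K(a_2))=\epsilon(a)1_H$, and therefore $R(S_{\rightharpoonup_R}(a))=S_H(R(a))$; anti-comultiplicativity of $S_{\rightharpoonup_R}$ then follows because $R$ is injective and comultiplicative and $S_H$ is anti-comultiplicative. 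Your proposed alternative — a direct computation from $S_{\rightharpoonup_R}(a)=S_H(R(a_1))\rightharpoonup S_K(a_2)$ using the braided anti-comultiplicativity of $S_K$ and the Yetter--Drinfeld braiding — is not carried out, and it is the hard part: $S_K$ is only anti-comultiplicative up to the braiding $\sigma^{\mathcal{YD}}_{K,K}$ and is $H$-(co)linear in the braided sense, so turning that into the \emph{ordinary} anti-comultiplicativity required by the definition is a nontrivial computation that your sketch does not supply.

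The same omission silently undermines your treatment of \eqref{MP5}. You say it follows from \eqref{RotaBaxterequation3} "after cancelling the injective $R$ exactly as in the previous proof", but in Proposition \ref{prop:secondequivalence} the cancellation was possible because $\leftharpoonup_R$ was written through $R^{-1}$; without bijectivity one must first rewrite both sides of \eqref{RotaBaxterequation3} in the form $(R\otimes R)(\cdots)$, and this uses exactly $R\,S_{\rightharpoonup_R}=S_HR$ together with $R(a\bullet_{\rightharpoonup_R}b)=R(a)\cdot R(b)$ to identify $S_HR(R(a_1)\rightharpoonup b_1)\cdot R(a_2)\cdot R(b_2)$ with $R(a_1\leftharpoonup_R b_1)$ before injectivity can be invoked. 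So the one lemma you skip is needed twice. By contrast, the adjunction half of your proposal is correct: your unit/counit presentation with $\varepsilon_R=(R\colon K_{\rightharpoonup_R}\to H,\ \mathrm{Id}_K)$, whose multiplicativity is \eqref{RotaBaxterequation}, is just a repackaging of the paper's natural bijection $g\mapsto(Rg,g)$, and the triangle identities hold as you state.
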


\begin{proof}
The first part of the proof of Proposition \ref{prop:secondequivalence} remains true in this setting. Then, we have
\[
a\bullet_{\rightharpoonup_{R}}b:=a_{1}\cdot(R(a_{2})\rightharpoonup b),\qquad S_{\rightharpoonup_{R}}(a):=S_{H}(R(a_{1}))\rightharpoonup S_{K}(a_{2}).
\]
In order to obtain that $(K,\rightharpoonup_{R})$ is a Yetter--Drinfeld post-Hopf algebra it only remains to prove that $S_{\rightharpoonup_{R}}$ is anti-comultiplicative and the pair $(\rightharpoonup_{R},\leftharpoonup_{R})$ satisfies \eqref{MP5}, where 
\[
a\leftharpoonup_{R}b:=S_{\rightharpoonup_{R}}(a_{1}\rightharpoonup_{R}b_{1})\bullet_{\rightharpoonup_{R}}a_{2}\bullet_{\rightharpoonup_{R}}b_{2}.
\]
Observe that, in term of $\bullet_{\rightharpoonup_{R}}$, \eqref{RotaBaxterequation} reads $R(a)\cdot R(b)=R(a\bullet_{\rightharpoonup_{R}}b)$ and then \eqref{RotaBaxterequation3} becomes:
\begin{equation}\label{auxiliaryeq}
S_{H}R(a_{1}\rightharpoonup_{R} b_{1})\cdot R(a_{2}\bullet_{\rightharpoonup_{R}}b_{2})\otimes 
R(a_{3}\rightharpoonup_{R} b_{3})=S_{H}R(a_{2}\rightharpoonup_{R} b_{2})\cdot R(a_{3}\bullet_{\rightharpoonup_{R}}b_{3})\otimes 
R(a_{1}\rightharpoonup_{R} b_{1}).
\end{equation}
Let us observe that  
\begin{equation}\label{propPhi}
R(a_{1})\rightharpoonup S_{\rightharpoonup_{R}}(a_{2})=R(a_{1})\rightharpoonup\big(S_{H}R(a_{2})\rightharpoonup S_{K}(a_{3})\big)=\big(R(a_{1})\cdot S_{H}(R(a_{2}))\big)\rightharpoonup S_{K}(a_{3})=S_{K}(a)
\end{equation}
and so
\begin{equation}\label{propPhi2}
    R(a_{1})\cdot R(S_{\rightharpoonup_{R}}(a_{2}))\overset{\eqref{RotaBaxterequation}}{=}R\big(a_{1}\cdot(R(a_{2})\rightharpoonup S_{\rightharpoonup_{R}}(a_{3}))\big)=R(a_{1}\cdot S_{K}(a_{2}))=\epsilon(a)1_{H}.
\end{equation}
Thus, we obtain
\begin{equation}\label{propPhi3}
R(S_{\rightharpoonup_{R}}(a))=S_{H}(R(a_{1}))\cdot R(a_{2})\cdot R(S_{\rightharpoonup_{R}}(a_{3}))\overset{\eqref{propPhi2}}{=}S_{H}(R(a)),
\end{equation}
i.e. $RS_{\rightharpoonup_{R}}=S_{H}R$. Therefore, since $R$ is injective and comultiplicative and $S_{H}$ is anti-comultiplicative, $S_{\rightharpoonup_{R}}$ is anti-comultiplicative. Moreover, \eqref{auxiliaryeq} becomes
\[
R\big(S_{\rightharpoonup_{R}}(a_{1}\rightharpoonup_{R} b_{1})\bullet_{\rightharpoonup_{R}}a_{2}\bullet_{\rightharpoonup_{R}}b_{2}\big)\otimes 
R(a_{3}\rightharpoonup_{R} b_{3})=R\big(S_{\rightharpoonup_{R}}(a_{2}\rightharpoonup_{R} b_{2})\bullet_{\rightharpoonup_{R}}a_{3}\bullet_{\rightharpoonup_{R}}b_{3}\big)\otimes 
R(a_{1}\rightharpoonup_{R} b_{1}),
\]
which is exactly 
\[
R(a_{1}\leftharpoonup_{R} b_{1})\otimes R(a_{2}\rightharpoonup_{R} b_{2})=R(a_{2}\leftharpoonup_{R}b_{2})\otimes R(a_{1}\rightharpoonup_{R}b_{1}).
\]
Since $R$ is injective we get that the pair $(\rightharpoonup_{R},\leftharpoonup_{R})$ satisfies \eqref{MP5}, hence $(K,\rightharpoonup_{R})$ is a Yetter--Drinfeld post-Hopf algebra. As in the proof of Proposition \ref{prop:secondequivalence} one concludes that $\mathsf{R}'$ is a functor.

Finally, we just have to prove that, for every $R:K\to H$ in $\Cc'
$ and $(H',\rightharpoonup')$ in $\mathcal{YD}\mathrm{PH}(\mathrm{Vec}_{\Bbbk})$, there is a natural bijection between $\mathrm{Hom}(L(H',\rightharpoonup'),R:K\to H)$ and $\mathrm{Hom}((H',\rightharpoonup'),\mathsf{R}'(R:K\to H))$, i.e. between $\mathrm{Hom}(\mathrm{Id}_{H'}:H'\to H'_{\rightharpoonup'},R:K\to H)$ and $\mathrm{Hom}((H',\rightharpoonup'),(K,\rightharpoonup_{R}))$. Given a morphism $(f:H'_{\rightharpoonup'}\to H,g:H'\to K)$ in $\mathrm{Hom}(\mathrm{Id}_{H'}:H'\to H'_{\rightharpoonup'},R:K\to H)$ we take $g$ as a morphism in $\mathrm{Hom}((H',\rightharpoonup'),(K,\rightharpoonup_{R}))$. Indeed, $g$ is a morphism of algebras and coalgebras; moreover, since $f=f\mathrm{Id}_{H'}=Rg$, we have
\[
g(a\rightharpoonup' b)=f(a)\rightharpoonup_{K}g(b)=R(g(a))\rightharpoonup_{K}g(b)=g(a)\rightharpoonup_{R}g(b).
\]
On the other hand, given $g:(H',\rightharpoonup')\to(K,\rightharpoonup_{R})$ in $\mathrm{Hom}((H',\rightharpoonup'),(K,\rightharpoonup_{R}))$ we take $(Rg:H'_{\rightharpoonup'}\to H,g:H'\to K)$ as a morphism in $\mathrm{Hom}(\mathrm{Id}_{H'}:H'\to H'_{\rightharpoonup'},R:K\to H)$. Indeed, $g:H'\to K$ is a morphism of algebras and coalgebras, so $Rg$ is a morphism of coalgebras. Moreover, $Rg$ is also a morphism of algebras:
\[
\begin{split}
Rg(a\bullet_{\rightharpoonup'}b)&=Rg(a_{1}\cdot(a_{2}\rightharpoonup' b))=R\big(g(a_{1})\cdot g(a_{2}\rightharpoonup'b)\big)=R\big(g(a_{1})\cdot(g(a_{2})\rightharpoonup_{R}g(b))\big)\\&=R\big(g(a_{1})\cdot(Rg(a_{2})\rightharpoonup g(b))\big)=Rg(a)\cdot Rg(b)
\end{split}
\]
and \eqref{morphRotaBaxter} is trivially satisfied.
Clearly the two assignments are inverse to each other and the bijection is natural.
\end{proof}

In the cocommutative setting we obtain an adjunction between cocommutative post-Hopf algebras and injective relative Rota--Baxter operators on cocommutative Hopf algebras. This is exactly the adjunction given in \cite[Theorem 3.4]{YYT}, which is given for relative Rota--Baxter operators on cocommutative Hopf algebras not necessarily injective. These can be seen as objects $R:K\to H$ in $\Cc$ where $K$ and $H$ are cocommutative and the $H$-coaction on $K$ is given by $a\mapsto R(a_{1})S_{H}(R(a_{3}))\otimes a_{2}$: an example is given by the extension of a relative Rota–Baxter operator on a Lie algebra to the respective universal enveloping algebras, as it is proved in \cite[Theorem 3.6]{YYT}.


\begin{remark}
Assume the hypotheses of the previous result are satisfied. By Proposition \ref{HharpHopf} we have that $(K,\bullet_{\rightharpoonup_{R}},1,\Delta,\epsilon,S_{\rightharpoonup_{R}})$ is a Hopf algebra, where 
\[
a\bullet_{\rightharpoonup_{R}}b:=a_{1}\cdot(R(a_{2})\rightharpoonup b), \qquad S_{\rightharpoonup_{R}}(a):=S_{H}R(a_{1})\rightharpoonup S_{K}(a_{2})
,
\]
which we call the \textit{descendent Hopf algebra} $K_{R}$ in analogy with \cite[Corollary 3.5]{YYT}. We automatically have that $(K,\cdot,\bullet_{\rightharpoonup_{R}},1,\Delta,\epsilon,S_{K},S_{\rightharpoonup_{R}})$ is a Yetter--Drinfeld brace by Corollary \ref{cor:fromPHtoBr}; this result generalises \cite[Proposition 3.16]{HLTL}. Moreover, using Corollary \ref{cor:fromPHtoMP}, one obtains a matched pair of actions $(K_{R},\rightharpoonup_{R},\leftharpoonup_{R})$, where $a\leftharpoonup_{R}b:=S_{\rightharpoonup_{R}}(a_{1}\rightharpoonup_{R}b_{1})\bullet_{\rightharpoonup_{R}}a_{2}\bullet_{\rightharpoonup_{R}}b_{2}$; this result generalises \cite[Corollary 4.10]{YYT}.
\end{remark}




\bigskip

\noindent\textbf{Acknowledgements}. 
The author would like to thank A. Ardizzoni for useful comments and D. Ferri for his interest in this paper and a nice discussion regarding possible future developments. 
Furthermore, he would like to express his gratitude to the referee for meaningful suggestions. This paper was written while the author was member of the “National Group for Algebraic and Geometric Structures and their Applications” (GNSAGA-INdAM). He was also partially supported by the project funded by the European Union -NextGenerationEU under NRRP, Mission 4 Component 2 CUP D53D23005960006 - Call PRIN 2022 No. 104 of February 2, 2022 of Italian Ministry of University and Research; Project 2022S97PMY Structures for Quivers, Algebras and Representations (SQUARE).


\begin{thebibliography}{}

\bibitem{AEM}
M.J.H Al-Kaabi, K. Ebrahimi-Fard, D. Manchon, 
\emph{Free post-groups, post-groups from group actions, and post-Lie algebras}. J. Geom. Phys. 198 (2024), Paper No. 105129, 15 pp.


\bibitem{AGV}
I. Angiono, C. Galindo, L. Vendramin, 
\emph{Hopf braces and Yang–Baxter operators}. Proc. Amer. Math. Soc. 145 (2017), no. 5, 1981–1995.

\bibitem{BGN}
C. Bai, L. Guo, X. Ni, \emph{Nonabelian generalised Lax pairs, the classical Yang-Baxter equation and post-Lie algebras}. Comm. Math. Phys. 297 (2010), no. 2, 553–596.

\bibitem{BGST}
C. Bai, L. Guo, Y. Sheng, R. Tang, 
\emph{Post-groups, (Lie-)Butcher groups and the Yang-Baxter equation}. Math. Ann. 388 (2024), 3127-3167.

\bibitem{BG}
V.G. Bardakov, V. Gubarev, 
\emph{Rota–Baxter groups, skew left braces, and the Yang–Baxter equation}. J. Algebra 596 (2022), 328–351.

\bibitem{Bor}
F. Borceux, 
\emph{Handbook of Categorical Algebra. 1. Basic Category Theory}. Encyclopedia Math. Appl. 50, Cambridge University Press, Cambridge, 1994, xvi+345 pp.

\bibitem{BK}
Y. Bruned, F. Katsetsiadis, 
\emph{Post-Lie algebras in regularity structures}. Forum Math. Sigma 11 (2023), Paper No. e98, 20 pp.

\bibitem{CL}
F. Chapoton, M. Livernet, 
\emph{Pre-Lie algebras and the rooted trees operad}. Internat. Math. Res. Notices (2001), no. 8, 395–408.

\bibitem{CK}
A. Connes, D. Kreimer,
\emph{Renormalization in quantum field theory and the Riemann–Hilbert problem. I. The Hopf algebra structure of graphs and the main theorem}. Comm. Math. Phys. 210 (2000), no. 1, 249–273.

\bibitem{CEO}
C. Curry, K. Ebrahimi-Fard, B. Owren, \emph{The Magnus expansion and post-Lie algebras}. Math. Comp. 89 (2020), no. 326, 2785–2799.

\bibitem{ELM}
K. Ebrahimi-Fard, A. Lundervold, H.Z. Munthe-Kaas, \emph{On the Lie enveloping algebra of a post-Lie algebra}. J. Lie Theory 25 (2015), no. 4, 1139–1165.

\bibitem{FS}
D. Ferri, A. Sciandra,
\emph{Matched pairs and Yetter--Drinfeld braces}. 
\href{https://arxiv.org/abs/2406.10009}{arXiv:2406.10009}.

\bibitem{Go}
M. Goncharov, 
\emph{Rota–Baxter operators on cocommutative Hopf algebras}. J. Algebra 582 (2021), 39–56.


\bibitem{GV}
L. Guarnieri, L. Vendramin,  \emph{Skew braces and the Yang–Baxter equation}. Math. Comp. 86 (2017), no. 307, 2519–2534.

\bibitem{GGV}
J.A. Guccione, J.J. Guccione, C. Valqui, 
\emph{Set-theoretic type solutions of the braid equation}. \href{https://arxiv.org/abs/2008.13494}
{arXiv:2008.13494 (v5)}.

\bibitem{GGVe}
J.A. Guccione, J.J. Guccione, L. Vendramin, 
\emph{Yang--Baxter operators in symmetric categories}. Comm. Algebra 46 (2018), 2811-2845.


\bibitem{GLS}
L. Guo, H. Lang, Y. Sheng, 
\emph{Integration and geometrization of Rota--Baxter Lie algebras}. Adv. Math. 387 (2021), Paper No. 107834, 34 pp.

\bibitem{Kreimer}
D. Kreimer,
\emph{On the Hopf algebra structure of perturbative quantum field theories}. Adv. Theor. Math. Phys. 2 (1998), no. 2, 303–334.

\bibitem{Ku}
B.A. Kupershmidt, 
\emph{What a classical r-matrix really is}. J. Nonlinear Math. Phys. 6 (1999), no. 4, 448-488.

\bibitem{Li}
Y. Li, 
\emph{Matched pairs and Yang-Baxter operators}. 
\href{https://arxiv.org/abs/2501.11975}{arXiv:2501.11975}.

\bibitem{YYT}
Y. Li, Y. Sheng, R. Tang,
\emph{Post-Hopf algebras, relative Rota--Baxter operators and solutions to the Yang-Baxter equation}. J. Noncommut. Geom. 18 (2024), no. 2, 605–630.

\bibitem{YYT2}
Y. Li, Y. Sheng, R. Tang,
\emph{Post-Hopf algebras, relative Rota--Baxter operators and solutions of the Yang-Baxter equation}.
\href{https://arxiv.org/abs/2203.12174}{arXiv:2203.12174}.

\bibitem{Majid1}
S. Majid, 
\emph{Transmutation theory and rank for quantum braided groups}. Math. Proc. Cambridge Philos. Soc. 113 (1993), no. 1, 45–70.

\bibitem{Majid2}
S. Majid,
\emph{Foundations of quantum group theory}. Cambridge University Press, Cambridge, 1995. x+607 pp.

\bibitem{MuLu}
H.Z. Munthe-Kaas, A. Lundervold,
\emph{On post-Lie algebras, Lie–Butcher series and moving frames}. Found. Comput. Math. 13 (2013), no. 4, 583–613.

\bibitem{OG}
J.-M. Oudom, D. Guin, 
\emph{On the Lie enveloping algebra of a pre-Lie algebra}. J. K-Theory 2 (2008), no. 1, 147–167.

\bibitem{Rad}
D.E. Radford,
\emph{Hopf algebras}. Ser. Knots Everything, 49
World Scientific Publishing Co. Pte. Ltd., Hackensack, NJ, 2012, xxii+559 pp.

\bibitem{Ru}
W. Rump, 
\emph{Braces, radical rings, and the quantum Yang–Baxter equation}. J. Algebra 307 (2007), no. 1, 153–170.

\bibitem{Sw}
M.E. Sweedler, 
\emph{Hopf Algebras}. Math. Lecture Note Ser.
W. A. Benjamin, Inc., New York, 1969. vii+336 pp.

\bibitem{V}
B. Vallette, 
\emph{Homology of generalised partition posets}. J. Pure Appl. Algebra 208 (2007), no. 2, 699–725. 

\bibitem{HLTL}
H. Zheng, L. Guo, T. Ma, L. Zhang,
\emph{Rota--Baxter operators on cocommutative Hopf algebras and Hopf braces.
}
\href{https://arxiv.org/abs/2304.07684}{arXiv:2304.07684}.



\end{thebibliography}
\end{document}